\documentclass{article}
\usepackage[utf8]{inputenc}

\usepackage{amssymb}
\usepackage{amsthm}
\usepackage{amsmath,amscd}
\usepackage[mathscr]{euscript}
\usepackage[all]{xy}
\usepackage{lmodern}
\usepackage[T1]{fontenc}
\usepackage[textwidth=14cm,hcentering]{geometry}
\usepackage[colorlinks=true,linkcolor=red,citecolor=blue]{hyperref}
\usepackage{cleveref}
\usepackage{mathtools}
\usepackage{tikz}
\usetikzlibrary{cd}
\usetikzlibrary{arrows,positioning}
\crefname{ex}{Example}{Examples}

\title{Comparison of Motivic Homotopy Theories}
\author{Tianjian Tan}
\date{}

\newtheorem{thm}{Theorem}[section]
\newtheorem{lm}[thm]{Lemma}
\newtheorem{prop}[thm]{Proposition}

\newtheorem*{thm*}{Theorem}

\theoremstyle{definition}
\newtheorem{defn}[thm]{Definition}

\newtheorem{nota}[thm]{Notation}

\newtheorem{ex}[thm]{Example}

\newtheorem{rmk}[thm]{Remark}

\crefname{ques}{Question}{Questions}

\newcommand{\on}{\operatorname}

\newcommand{\PrL}{\mathsf{Pr}^\mathrm{L} }

\DeclareFontFamily{U}{min}{}
\DeclareFontShape{U}{min}{m}{n}{<-> udmj30}{}

\begin{document}
\maketitle
\begin{abstract}
 We construct a comparison functor from the dual category of motivic homotopy category $\mathcal{SH}$ to the category of $\mathbb{A}^1$-invariant localizing motives $\on{Mot}_{\on{loc}}^{\mathbb{A}^1}$ in the sense of Blumberg, Gepner and Tabuada (with $\mathbb{A}^1$-invariance imposed). We as well construct its non-$\mathbb{A}^1$-invariant analogue: a functor from the dual category of Annala-Iwasa-Hoyois's non-$\mathbb{A}^1$-invariant motivic homotopy category $\mathcal{MS}$ to $\on{Mot}_{\on{loc}}$. After the Barr-Beck argument, these functors factor through categories of modules over a dual version of ($\mathbb{A}^1$-invariant) K-theory spectrum $\on{KGL}^{(\mathbb{A}^1)}$. Over a field that admits resolution of singularities, we show that the $\mathbb{A}^1$-invariant factored functor is fully-faithful, while the non-$\mathbb{A}^1$-invariant one is not in general.
\end{abstract}
\tableofcontents

\addtocontents{toc}{\protect\setcounter{tocdepth}{1}}

\section{Introduction}
In the thesis, we want to compare motivic homotopy theories and and noncommutative motives in the sense of \cite{BGT}. To be precise, we compare four motivic categories\begin{itemize}
\item non-$\mathbb{A}^1$-invariant motivic homotopy category $\mathcal{MS}$ of Annala, Iwasa and Hoyois \cite{AI}, \cite{AHI};
\item $\mathbb{A}^1$-invariant motivic homotopy category $\mathcal{SH}$ of Morel and Voevodsky;
\item category of localizing motives $\on{Mot}_{\on{loc}}$ of Blumberg, Gepner and Tabuada \cite{BGT};
\item category of $\mathbb{A}^1$-invariant localizing motives $\on{Mot}_{\on{loc}}^{\mathbb{A}^1}$
\end{itemize}

The comparisons between the first two, and the last two categories are given by localizations with respect to all projections $X\times\mathbb{A}^1\rightarrow X$. So we focus on the comparison between the first and the third categories, and the $\mathbb{A}^1$-invariant analogue between the second and the fourths. There is a natural candidate for the comparison functor between $\mathcal{SH}$ and $\on{Mot}_{\on{loc}}^{\mathbb{A}^1}$ (resp. between $\mathcal{MS}$ and $\on{Mot}_{\on{loc}}$), at least when we restrict the motivic homotopy category to the category of smooth schemes of finite presentation, that is,
\[\begin{aligned}
(Sm^{\on{fp}})^{op}&\rightarrow\on{Mot}_{\on{loc}}^{\mathbb{A}^1}\\
X&\mapsto\on{L}_{\mathbb{A}^1}\mathcal{U}(\on{perf}_X).
\end{aligned}\]
resp.\[\begin{aligned}
(Sm^{\on{fp}})^{op}&\rightarrow\on{Mot}_{\on{loc}}\\
X&\mapsto\mathcal{U}(\on{perf}_X).
\end{aligned}\]
The functor $\mathcal{U}$ above is the universal localizing invariant constructed in \cite{BGT}. This functoriality suggests us extending the domain to some kind of dual category of $\mathcal{SH}$ (resp. of $\mathcal{MS}$). It turns out that the desired notion of duality here is the dual object in the symmetric monoidal category of stable presentable categories $\PrL_{st}$, which we will denote by $(-)^\vee$. A first example is that, for a small category $\mathcal{C}$, the dual of the spectra-valued presheaf category is simply $\mathcal{P}sh(\mathcal{C}^{op})$, the presheaf category on $\mathcal{C}^{op}$. Such dualizibility is largely studied and applied in the work of Clausen-Scholze \cite{CS}, Efimov \cite{Efi}, etc.

To extend the natural candidate above to a functor \[\mathcal{SH}^\vee\rightarrow\on{Mot}_{\on{loc}}^{\mathbb{A}^1},\]
resp. \[\mathcal{MS}^\vee\rightarrow\on{Mot}_{\on{loc}},\]
we need a universal property that characterizes functors out of $\mathcal{SH}^\vee$ (resp. $\mathcal{MS}^\vee$). It turns out that just as $\mathcal{SH}$ (resp. $\mathcal{MS}$) is defined from presheaf category by first localizing with respect certain descent conditions, and then inverting the Tate circle $\mathbb{P}^1$, the dual category $\mathcal{SH}^\vee$ (resp. $\mathcal{MS}^\vee$) admits a dual description:
\begin{prop}
$\mathcal{SH}^\vee$ (resp. $\mathcal{MS}^\vee$) is equivalent to the category constructed from the spectra-valued presheaf category $\mathcal{P}sh((Sm^{\on{fp}})^{op})$, by first localizing it with respect to certain codescent conditions, and then formally inverting a dual version of $\mathbb{P}^1$.
\end{prop}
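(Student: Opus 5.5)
The plan is to realise both $\mathcal{SH}$ and $\mathcal{MS}$ as colimits in $\PrL_{st}$ built from a presheaf category. Each step should then have a dual that is computed by passing to right adjoints. Concretely, $\mathcal{SH}$ is obtained from $\mathcal{P}sh(Sm^{\on{fp}})$ in two steps. First we take the Bousfield localization $L$ at a small set $S$ of maps: Nisnevich (or elementary) squares together with the $\mathbb{A}^1$-projections, and for $\mathcal{MS}$ instead the elementary blowup squares and the $\mathbb{P}^1$-bundle relations of Annala--Iwasa--Hoyois. Second we form the colimit $\on{colim}(L\mathcal{P}sh \xrightarrow{\otimes \mathbb{P}^1} L\mathcal{P}sh \xrightarrow{\otimes\mathbb{P}^1}\cdots)$ in $\PrL_{st}$.

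The key general facts are the following.

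(i) $\mathcal{P}sh(\mathcal{C})$ is dualizable with dual $\mathcal{P}sh(\mathcal{C}^{op})$. The evaluation pairing is given by the Yoneda-type pairing $\mathcal{P}sh(\mathcal{C})\otimes\mathcal{P}sh(\mathcal{C}^{op})\simeq\on{Fun}(\mathcal{C}^{op}\times\mathcal{C},\on{Sp})\to\on{Sp}$, i.e. the coend. I take this as stated earlier.

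(ii) A localization $L\mathcal{P}sh(\mathcal{C})=\mathcal{P}sh(\mathcal{C})/\langle \on{cofib}(s)\rangle_{s\in S}$ is a Verdier quotient by a compactly generated subcategory. It is therefore again compactly generated, with compact objects the idempotent completion of $\mathcal{P}sh(\mathcal{C})^\omega/\langle \on{cofib}(s)\rangle$. For compactly generated $\mathcal{D}=\on{Ind}(\mathcal{D}^\omega)$ one has $\mathcal{D}^\vee\simeq\on{Ind}((\mathcal{D}^\omega)^{op})$. Taking opposites of this Verdier quotient presentation, the dual is the quotient of $\mathcal{P}sh(\mathcal{C}^{op})=\on{Ind}((\mathcal{P}sh(\mathcal{C})^\omega)^{op})$ by the cofibers of the opposite maps. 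Here cofibers in the opposite category are fibers, so these are the ``codescent'' conditions: e.g. a Nisnevich square $U\times_X V\to U\sqcup V\to X$ in $\mathcal{C}$ gives, in $\mathcal{C}^{op}$ with covariant Yoneda, a condition that the square of representables becomes bicartesian in the other direction.

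(iii) For the $\mathbb{P}^1$-stabilization I use that a sequential colimit in $\PrL_{st}$ of compactly generated categories along compact-preserving functors is $\on{Ind}$ of the colimit of compact parts. Dualizing, the sequence becomes $(L\mathcal{P}sh)^\vee\to(L\mathcal{P}sh)^\vee\to\cdots$ along the dual functors $(-\otimes\mathbb{P}^1)^\vee$. On compacts this is $((-)\otimes\mathbb{P}^1)^{op}$. The reduced motive of $\mathbb{P}^1$ is compact and, after localization, $\otimes$-invertible only at the end. So the dual functor is tensoring with the corresponding object $\mathbb{P}^{1,\vee}$ of the dual category, which is monoidal via the opposite Day convolution. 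Thus $\mathcal{SH}^\vee\simeq (L\mathcal{P}sh(\mathcal{C}^{op}))[(\mathbb{P}^{1,\vee})^{-1}]$, and likewise for $\mathcal{MS}$.

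The step I expect to be the main obstacle is (ii)–(iii) for $\mathcal{MS}$. There the localizing conditions (including $\mathbb{P}^1$-bundle formula/elementary blowup excision) must be checked to be generated by a \emph{set} of maps between compact objects, so that $\on{Ind}$ of compacts applies. One also has to check that the stabilization colimit is taken along compact-object-preserving functors; equivalently, that the right adjoint of $\otimes\mathbb{P}^1$ preserves filtered colimits. I would first verify that all defining relations are cofibers of finite diagrams of representables, which holds since the relevant squares and bundles are finite-presentation schemes. I would then identify the dual functor concretely via the equivalence $\on{Fun}^L(\mathcal{D},\mathcal{E})\simeq\mathcal{D}^\vee\otimes\mathcal{E}$ to make the phrase ``dual version of $\mathbb{P}^1$'' precise.
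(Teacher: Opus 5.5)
Your steps (i) and (ii) are fine. Computing the dual of the Bousfield localization as $\on{Ind}$ of the opposite of the Verdier quotient of compacts is a valid alternative to the paper's route through $\on{Fun}^L(-,\on{Sp})$ and the universal property of localizations, and it yields the same co-representable conditions as Lemma \ref{desc}.

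The gap is in step (iii). You model the $\mathbb{P}^1$-inversion by the telescope $\on{colim}(\mathcal{C}\xrightarrow{\mathbb{P}^1}\mathcal{C}\xrightarrow{\mathbb{P}^1}\cdots)$. Identifying this colimit with the formal inversion $\mathcal{C}[(\mathbb{P}^1)^{-1}]$ (Proposition \ref{tel}) requires $\mathbb{P}^1$ to be symmetric, i.e. the cyclic permutation of $(\mathbb{P}^1)^{\otimes 3}$ must be homotopic to the identity. In $\mathcal{S}hv_{\on{Nis},\mathbb{A}^1}$ this is Voevodsky's result, and its proof uses an $\mathbb{A}^1$-homotopy of elementary matrices. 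The paper explicitly remarks that this symmetry relies on $\mathbb{A}^1$-invariance, so $\mathcal{MS}$ does not admit the telescope presentation in general; this is why Annala--Iwasa introduced c-spectra. For $\mathcal{MS}$, your argument therefore only shows that the dual of the telescope is the telescope of the dual along $-\otimes(\mathbb{P}^1)^{dual}$. That identifies neither $\mathcal{MS}^\vee$ nor any formal inversion.

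The points you single out as the main obstacle (generation by a set of maps between compact objects, compactness of $\mathbb{P}^1$) are routine; the symmetry hypothesis is the real one. Even for $\mathcal{SH}$ you owe one more sentence before calling the dual telescope a formal inversion: symmetry of $\mathbb{P}^1$ in $\mathcal{C}$ must transfer to symmetry of $(\mathbb{P}^1)^{dual}$ in $\mathcal{C}^\vee$. This is immediate, since the dual monoidal structure restricts to the opposite one on $(\mathcal{C}^\omega)^{op}$.

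The paper avoids models altogether in Proposition \ref{formal.inversion.prop}. For $\mathcal{C}\in\on{CAlg}(\PrL_{ca})$ and any compact $X$, symmetric or not, it proves $\mathcal{C}[X^{-1}]^\vee\simeq\mathcal{C}^\vee[(X^{dual})^{-1}]$. The ingredients are:
\begin{itemize}
\item $(-)^\vee$ is a symmetric monoidal self-equivalence of $\PrL_{ca}$;
\item $\mathcal{C}\to\mathcal{C}[X^{-1}]$ is strongly continuous \cite[Lemma 1.5.2]{AI};
\item the universal property of formal inversion restricts to $\PrL_{ca}$ (Lemma \ref{lm.stable.universal.property});
\item $(-\otimes X)^\vee\simeq -\otimes X^{dual}$.
\end{itemize}
Given these, both objects corepresent the same thing on $\on{CAlg}(\PrL_{ca})_{\mathcal{C}^\vee/}$, namely algebras $\mathcal{C}^\vee\to\mathcal{D}$ that invert $X^{dual}$. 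For $\mathcal{C}[X^{-1}]^\vee$ you see this by dualizing to $\mathcal{C}\to\mathcal{D}^\vee$, which inverts $X$ exactly when $\mathcal{C}^\vee\to\mathcal{D}$ inverts $X^{dual}$. Replacing your step (iii) by this argument, or by a dualization of the c-spectrum model, repairs the $\mathcal{MS}$ case and removes the symmetry bookkeeping in the $\mathcal{SH}$ case.

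A minor point: $\mathcal{MS}$ is defined by Nisnevich and elementary blowup excision only. No $\mathbb{P}^1$-bundle relations are imposed before stabilization.
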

To deduce the proposition above, we computed the dual category for general formal inversion $\mathcal{C}[X^{-1}]$ for a stable, presentably symmetric monoidal, compactly generated category $\mathcal{C}$ and $X$ a compact object in $\mathcal{C}$:
\begin{prop}[Proposition \ref{formal.inversion.prop}]
Let $\mathcal{C}$ be a stable, compactly generated, presentably symmetric monoidal category. Let $X\in\mathcal{C}$ be a compact object. Then we have a canonical equivalence of categories
\[\mathcal{C}^\vee[(X^{dual})^{-1}]\simeq \mathcal{C}[X^{-1}]^\vee.\]
Moreover, the symmetric monoidal structures are compatible.
\end{prop}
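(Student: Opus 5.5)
Throughout, write $\mathcal{C}^\omega$ for the compact objects of $\mathcal{C}$. The plan is to reduce everything to compact objects. Since $\mathcal{C}$ is stable and compactly generated, $\mathcal{C}\simeq \on{Ind}(\mathcal{C}^\omega)$, and its dual in $\PrL_{st}$ is $\mathcal{C}^\vee\simeq \on{Ind}((\mathcal{C}^\omega)^{op})$. The evaluation pairing is induced by the mapping spectrum $\mathcal{C}^\omega\times(\mathcal{C}^\omega)^{op}\to \on{Sp}$. Because $\mathcal{C}$ is presentably symmetric monoidal and the unit is compact (I will assume this, as it is implicit in the notation $X^{dual}$), the tensor product restricts to $\mathcal{C}^\omega$. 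Then $(\mathcal{C}^\omega)^{op}$ inherits a symmetric monoidal structure, and Ind-extension gives the symmetric monoidal structure on $\mathcal{C}^\vee$. Here $X^{dual}$ denotes $X$ regarded as an object of $(\mathcal{C}^\omega)^{op}\subset\mathcal{C}^\vee$; it is compact there.

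The first step is to describe $\mathcal{C}[X^{-1}]$ as compactly generated and identify its compact objects. Since $X$ is compact, I would use the standard formula
\[
\mathcal{C}[X^{-1}]\simeq \on{colim}\bigl(\mathcal{C}\xrightarrow{X\otimes-}\mathcal{C}\xrightarrow{X\otimes-}\cdots\bigr)
\]
in $\PrL$. This formula is valid once $X$ is symmetric, i.e.\ the cyclic permutation on $X^{\otimes 3}$ is homotopic to the identity. I expect this is either assumed or handled by Robalo's general construction, where $\mathcal{C}[X^{-1}]$ is the pushout $\mathcal{C}\otimes_{\on{Free}(\mathbf{1})}\on{Free}(\mathbf{1})[\mathbf{1}^{-1}]$. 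In either description every functor in the diagram preserves compacts. Hence $\mathcal{C}[X^{-1}]\simeq\on{Ind}(\mathcal{D})$, where
\[
\mathcal{D}=\on{colim}\bigl(\mathcal{C}^\omega\xrightarrow{X\otimes}\mathcal{C}^\omega\to\cdots\bigr)
\]
is a colimit of small idempotent-complete stable categories, idempotent-completed.

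The second step is dualization. Dualization $\on{Ind}(\mathcal{A})\mapsto\on{Ind}(\mathcal{A}^{op})$ is compatible with the above, because $(-)^{op}$ commutes with filtered colimits of small categories. This gives
\[
\mathcal{C}[X^{-1}]^\vee\simeq\on{Ind}(\mathcal{D}^{op}),\qquad \mathcal{D}^{op}\simeq\on{colim}\bigl((\mathcal{C}^\omega)^{op}\xrightarrow{X\otimes}(\mathcal{C}^\omega)^{op}\to\cdots\bigr).
\]
On $(\mathcal{C}^\omega)^{op}$ the functor $X\otimes-$ is exactly tensoring with $X^{dual}$ for the induced monoidal structure. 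Ind-extending then identifies $\on{Ind}(\mathcal{D}^{op})$ with the telescope for $X^{dual}\otimes-$ on $\mathcal{C}^\vee$, which is $\mathcal{C}^\vee[(X^{dual})^{-1}]$. The symmetry condition on $X^{dual}$ is inherited from that of $X$, since the cyclic permutation is the same morphism viewed in the opposite category.

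The third step is to check that the duality data is the correct one and that the monoidal structures match. A cleaner route that handles both points at once uses the universal property. Symmetric monoidal functors $\mathcal{C}^\vee[(X^{dual})^{-1}]\to\mathcal{E}$ are symmetric monoidal functors from $\mathcal{C}^\vee$ that invert $X^{dual}$. I would then exhibit the explicit evaluation pairing
\[
\mathcal{C}[X^{-1}]\otimes\mathcal{C}^\vee[(X^{dual})^{-1}]\to\on{Sp},\qquad (\Sigma^{\infty-n}_X a,\ \Sigma^{\infty-m}_{X^{dual}} b)\mapsto \on{colim}_k\on{map}(X^{\otimes(m+k)}\otimes a,\ X^{\otimes(n+k)}\otimes b),
\]
together with a coevaluation, and verify the triangle identities on compact generators.

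The main obstacle is this last verification, together with the monoidal compatibility. Colimits of compactly generated categories along symmetric monoidal functors need care. In particular, the identification of the telescope with the formal inversion relies on the symmetry of $X$, and for monoidality one must check that the equivalence $\on{Ind}(\mathcal{D}^{op})\simeq\mathcal{C}^\vee[(X^{dual})^{-1}]$ is symmetric monoidal. I would handle this through the universal property of formal inversion in $\on{CAlg}(\PrL)$ and the fact that $\on{Ind}$ is symmetric monoidal on small stable categories.
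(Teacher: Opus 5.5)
\textbf{There is a genuine gap: your argument only covers symmetric $X$, and it does not prove the monoidal part of the statement.}

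Steps 1 and 2 compute $\mathcal{C}[X^{-1}]$ as the telescope $\on{colim}(\mathcal{C}\xrightarrow{X\otimes-}\mathcal{C}\to\cdots)$. That description is only valid when $X$ is symmetric. The statement assumes nothing beyond compactness of $X$, and this generality matters. The paper applies the result to $\mathcal{MS}$, where $(\mathbb{P}^1,\infty)$ is not known to be symmetric; that is exactly why Annala--Iwasa use c-spectra.

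Your fallback does not close the gap. Robalo's pushout description does not give you your $\mathcal{D}$: for non-symmetric $X$, the canonical comparison from the telescope to $\mathcal{C}[X^{-1}]$ is not an equivalence. For example, take $\mathcal{C}$ to be spectral presheaves on $free(\ast)$ and $X$ the generator. The telescope is presheaves on $\mathbb{Z}\times B\Sigma_\infty$, while $\mathcal{C}[X^{-1}]$ is presheaves on the group completion $\Omega^\infty\mathbb{S}$.

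The pairing in Step 3 is written with the telescope mapping-spectrum formula, so it presupposes the same thing. Beyond that, you defer the coevaluation, the triangle identities and the monoidal compatibility, which is part of the statement. As written, the proposal gives only the underlying equivalence, and only for symmetric $X$.

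A minor point: a compact unit does not by itself force compact objects to be closed under $\otimes$. These are two separate hypotheses. Together they say $\mathcal{C}\in\on{CAlg}(\PrL_{ca})$, which is what makes $\mathcal{C}^\vee$ symmetric monoidal at all. Your identifications $\mathcal{C}^\vee\simeq\on{Ind}((\mathcal{C}^\omega)^{op})$ and $X^{dual}=X\in(\mathcal{C}^\omega)^{op}$ are correct, as is the observation that $X\otimes-$ there is tensoring with $X^{dual}$.

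\textbf{The missing idea is to avoid explicit models altogether, as the paper does.}

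Dualization is a symmetric monoidal autoequivalence of $\PrL_{ca}$; on morphisms, $L\mapsto L^\vee$ is precomposition with the right adjoint. It therefore induces an equivalence $\on{CAlg}(\PrL_{ca})_{\mathcal{C}/}\simeq\on{CAlg}(\PrL_{ca})_{\mathcal{C}^\vee/}$.

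The key lemma is the global form of your observation. For every $\mathcal{E}$ under $\mathcal{C}$, one has $(-\otimes X)^\vee\simeq -\otimes X^{dual}$ on $\mathcal{E}^\vee$. Since dualization preserves and reflects equivalences, $\mathcal{E}$ inverts $X$ if and only if $\mathcal{E}^\vee$ inverts $X^{dual}$.

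So the equivalence matches the full subcategories of algebras inverting $X$ and inverting $X^{dual}$. Their initial objects, $\mathcal{C}[X^{-1}]$ and $\mathcal{C}^\vee[(X^{dual})^{-1}]$, then correspond. For this you need two further facts:
\begin{itemize}
\item $\on{L}_X$ is strongly left adjoint, because its right adjoint preserves filtered colimits;
\item the universal property of formal inversion holds in $\on{CAlg}(\PrL_{ca})$.
\end{itemize}
This produces the equivalence directly as one of commutative algebras, so monoidal compatibility is automatic.

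If you prefer your compact-objects approach, the repair is to dualize Robalo's pushout square itself in $\on{CAlg}(\PrL_{cg})\simeq\on{CAlg}(\mathsf{Cat}^{\on{perf}})$, rather than passing through a telescope. There $(-)^{op}$ preserves pushouts, and presheaves on the groupoids $free(\ast)$ and $free(\ast)[\ast^{-1}]$ are self-dual.
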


Such characterization of $\mathcal{SH}^\vee$ (resp. $\mathcal{MS}^\vee$) as formal inversion of category of cosheaves is equipped with a natural universal property of the desired form (Proposition \ref{cons}):
\begin{prop}
The composition $\on{L}_{(\mathbb{P}^1)^{dual}}\circ\on{L}_{\on{coNis},\on{co}\mathbb{A}^1}\circ j$ induces, for each presentable, stable category $\mathcal{D}$, a fully faithful functor\[
\on{Fun}^L(\mathcal{SH}(S)^\vee,\mathcal{D})\rightarrow\on{Fun}(Sm^{\on{fp}}(S)^{op},\mathcal{D})
\]
with the essential image spanned by $\mathbb{A}^1$-invariant functors $F$ that preserve terminal object, send Nisnevich squares to pushouts, and send the fiber of \[fib(F(\mathbb{P}^1)\xrightarrow{F(\infty)}F(S))\]
to an invertible object.

Moreover, since the symmetric monoidal structures are compatible, the claim remains valid for symmetric monoidal category $\mathcal{D}$ and symmetric monoidal functors.

In particular, we have\[\on{Fun}^{L,\otimes}(\mathcal{SH}(S)^\vee,\on{Mot}(S)_{\on{loc}}^{\mathbb{A}^1})\simeq\on{Fun}^{\otimes}_{\on{Nis},\mathbb{A}^1,\mathbb{P}^1}(Sm^{\on{fp}}(S)^{op},\on{Mot}(S)_{\on{loc}}^{\mathbb{A}^1})
\]
\end{prop}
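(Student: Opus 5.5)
The plan is to build the universal property in three layers, matching the three-step description of $\mathcal{SH}(S)^\vee$ from the first Proposition above: spectral presheaves on $(Sm^{\on{fp}}(S))^{op}$, then localization at codescent and co-$\mathbb{A}^1$ conditions, then formal inversion of $(\mathbb{P}^1)^{dual}$. Each layer has a standard universal property, and composing them will give the full faithfulness and the essential image.

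\textbf{Step 1.} Write $j\colon Sm^{\on{fp}}(S)^{op}\to\mathcal{P}sh((Sm^{\on{fp}}(S))^{op})$ for the stabilized Yoneda embedding. By the universal property of (spectral) presheaf categories, restriction along $j$ gives an equivalence
\[\on{Fun}^L(\mathcal{P}sh((Sm^{\on{fp}}(S))^{op}),\mathcal{D})\simeq\on{Fun}(Sm^{\on{fp}}(S)^{op},\mathcal{D})\]
for stable presentable $\mathcal{D}$. Strictly, one must pass through space-valued presheaves of pointed objects. This is where the condition that $F$ preserves the terminal object enters: reduced functors correspond to colimit-preserving functors out of pointed presheaves, which then stabilize.

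\textbf{Step 2.} Apply $\on{L}_{\on{coNis},\on{co}\mathbb{A}^1}$, which is a Bousfield localization at an explicit small set of maps. These are, for each Nisnevich square, the map from the pushout of the images under $j$ of the three corners to the image of the fourth corner, together with the maps $j(X\times\mathbb{A}^1)\to j(X)$. Colimit-preserving functors out of a localization at a set $W$ are exactly those inverting $W$. Under Step 1, inverting these maps translates precisely into: $F$ sends Nisnevich squares to pushouts, and $F$ is $\mathbb{A}^1$-invariant. I would check that the localization used in the first Proposition is generated by exactly this set, and not by something saturated differently. In particular, codescent for the empty cover should account for the terminal-object condition.

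\textbf{Step 3.} Invert $(\mathbb{P}^1)^{dual}$, which by the second Proposition computes $\mathcal{SH}(S)^\vee$. The standard universal property of formal inversion (Robalo) says that for a symmetric monoidal $\mathcal{D}$, symmetric monoidal colimit-preserving functors out of $\mathcal{C}[X^{-1}]$ correspond fully faithfully to those out of $\mathcal{C}$ sending $X$ to an invertible object. Here the image of $(\mathbb{P}^1)^{dual}$ is $fib(F(\mathbb{P}^1)\to F(S))$.

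\textbf{Main obstacle.} This step is where I expect the difficulty, for two reasons.

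First, for plain (non-monoidal) colimit-preserving functors, ``invertible'' only makes sense after using the monoidal structure. Without it, one needs a description of $\mathcal{C}[X^{-1}]$ as the colimit $\on{colim}(\mathcal{C}\xrightarrow{X\otimes-}\mathcal{C}\to\cdots)$ in $\PrL$, equivalently the limit of the right adjoints. Then $\on{Fun}^L(\mathcal{C}[X^{-1}],\mathcal{D})$ is the limit of $\on{Fun}^L(\mathcal{C},\mathcal{D})$ along precomposition with $X\otimes-$. This is fully faithful into $\on{Fun}^L(\mathcal{C},\mathcal{D})$ exactly when the relevant transition maps are equivalences, which is guaranteed by Robalo's symmetric-cycle condition. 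I would verify that the cyclic permutation on $((\mathbb{P}^1)^{dual})^{\otimes 3}$ is homotopic to the identity. This should follow by dualizing the known fact for $\mathbb{P}^1$, since $(-)^{dual}$ is symmetric monoidal on compact objects.

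Second, one must identify $(\mathbb{P}^1)^{dual}$ with the cofiber/fiber of $j(\mathbb{P}^1)\to j(S)$ induced by $\infty$. This follows because the dual of the pointed object $\mathbb{P}^1$ is computed by applying the contravariant duality to the cofiber sequence $S\xrightarrow{\infty}\mathbb{P}^1\to\mathbb{P}^1/S$.

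Finally, compatibility of symmetric monoidal structures (second Proposition) gives the monoidal version, and specializing to $\mathcal{D}=\on{Mot}(S)^{\mathbb{A}^1}_{\on{loc}}$ gives the stated equivalence.
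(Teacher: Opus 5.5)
Your three-layer plan is the paper's argument: spectral presheaves, then the Bousfield localization at the explicit set of Lemma \ref{desc}, then the identification of $(\mathbb{P}^1)^{dual}$ with $fib(\on{L}j(\mathbb{P}^1)\to\on{L}j(S))$ as in Lemma \ref{description}, and finally Robalo's universal property via Proposition \ref{formal.inversion.prop} and Lemma \ref{lm.stable.universal.property}. For symmetric monoidal $\mathcal{D}$ and symmetric monoidal functors it goes through. For that version you should also check that the localizing set is stable under $-\otimes j(Y)$, i.e.\ under $-\times_S Y$, so that the cosheaf localization is compatible with Day convolution. Two things in your proposal are off, however.

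\textbf{The gap: your rescue of the non-monoidal claim in Step 3.} The symmetric-cycle condition only ensures $Tel_X(\mathcal{C})\simeq\mathcal{C}[X^{-1}]$ (Proposition \ref{tel}). It does not make precomposition with $X\otimes-$ fully faithful on $\on{Fun}^L(\mathcal{C},\mathcal{D})$. Hence the projection $\lim_n\on{Fun}^L(\mathcal{C},\mathcal{D})\to\on{Fun}^L(\mathcal{C},\mathcal{D})$ is not fully faithful in general. For $\mathbb{N}$-graded spectra with $X=\mathbb{S}$ in degree $1$, it is the projection $\prod_{\mathbb{Z}}\mathcal{D}\to\prod_{\mathbb{N}}\mathcal{D}$.

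In fact the first paragraph of the statement is false as written. Take $\mathcal{D}=\on{Sp}$; then $\on{Fun}^L(\mathcal{SH}(S)^\vee,\on{Sp})\simeq\mathcal{SH}(S)$. Unwinding the dualities (Remark \ref{dual.comp}), the restriction functor is $E\mapsto\bigl(X\mapsto\on{Map}(\Sigma^\infty_+X,E)\bigr)$, i.e.\ the right adjoint of $\on{L}_{\mathbb{P}^1}$. Over a perfect field this kills $E=\Sigma^{-1}_{\mathbb{P}^1}H\mathbb{Z}\neq0$, because $\pi_s\on{Map}(\Sigma^\infty_+X,E)=H^{-2-s,-1}(X)=0$ (negative weight). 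So the restriction is not even conservative, let alone fully faithful.

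No argument can prove the plain version. You should drop it and state the result only for symmetric monoidal $\mathcal{D}$ and symmetric monoidal functors, where ``invertible'' makes sense. That is also all the paper's one-line proof (combining Lemmas \ref{coShv.universal.property} and \ref{description} with Robalo) actually establishes.

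\textbf{A smaller slip in Step 1.} No reducedness condition enters there. $\mathcal{P}sh(Sm^{\on{fp}}(S)^{op})$ is the free stable presentable category on $Sm^{\on{fp}}(S)^{op}$, so restriction along $j$ is an equivalence onto all functors. The condition $F(\emptyset)\simeq0$ comes solely from inverting $j(\emptyset)\to0$ in Step 2; note that $j(\emptyset)=Sm^{\on{fp}}(S)(\emptyset,-)$ is the constant functor $\mathbb{S}$, not $0$. Likewise, the $\mathbb{A}^1$-maps in the co-setting point the other way: they are $j(X)\to j(X\times\mathbb{A}^1)$.
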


We also have a non-$\mathbb{A}^1$-invariant version (Proposition \ref{cons`}):
\begin{prop}
The composition $\on{L}_{(\mathbb{P}^1)^{dual}}\circ\on{L}_{\on{coNis},\on{coebu}}\circ j$ induces, for each presentable, stable category $\mathcal{D}$, a fully faithful functor\[
\on{Fun}^L(\mathcal{MS}(S)^\vee,\mathcal{D})\rightarrow\on{Fun}(Sm^{\on{fp}}(S)^{op},\mathcal{D})
\]
with the essential image spanned by functors $F$ that preserve terminal object, send Nisnevich squares and elementary blowup squares to pushouts, and send the fiber of \[fib(F(\mathbb{P}^1)\xrightarrow{F(\infty)}F(S))\]
to an invertible object.

Moreover, since the symmetric monoidal structures are compatible, the claim remains valid for symmetric monoidal category $\mathcal{D}$ and symmetric monoidal functors.

In particular, we have\[\on{Fun}^{L,\otimes}(\mathcal{MS}(S)^\vee,\on{Mot}(S)_{\on{loc}})\simeq\on{Fun}^{\otimes}_{\on{Nis},\on{ebu},\mathbb{P}^1}(Sm^{\on{fp}}(S)^{op},\on{Mot}(S)_{\on{loc}})
\]
\end{prop}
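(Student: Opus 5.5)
The plan mirrors the $\mathbb{A}^1$-invariant case (Proposition \ref{cons}), with the co-$\mathbb{A}^1$-localization replaced by co-elementary-blowup localization. We first describe $\mathcal{MS}(S)^\vee$ explicitly. Recall that $\mathcal{MS}(S)$ is obtained from the spectral presheaf category $\mathcal{P}sh(Sm^{\on{fp}}(S))$ by localizing at Nisnevich descent and elementary blowup excision, and then inverting $\mathbb{P}^1$ (pointed at $\infty$). The presheaf category is compactly generated, and both localizations are generated by cofibers of maps between compact objects, so the localized category is again compactly generated. Its dual is therefore $\on{Ind}$ of the opposite of its compact objects. Concretely, it is the full subcategory of $\mathcal{P}sh(Sm^{\on{fp}}(S)^{op})$ (the dual of the presheaf category) cut out by the dual conditions. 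These conditions are: co-Nisnevich codescent and co-elementary-blowup conditions, i.e. the Yoneda image $j$ sends Nisnevich and elementary blowup squares to pushout squares, and $j(S)$ is terminal. Call this category $\mathcal{C}^\vee$; its localization functor is $\on{L}_{\on{coNis},\on{coebu}}$. By Proposition \ref{formal.inversion.prop} applied to the compact object $X=\mathbb{P}^1/\infty$, we get $\mathcal{MS}(S)^\vee\simeq\mathcal{C}^\vee[((\mathbb{P}^1)^{dual})^{-1}]$, compatibly with the symmetric monoidal structures. Here $(\mathbb{P}^1)^{dual}$ is the fiber of $j(\mathbb{P}^1)\to j(S)$ induced by $\infty$.

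With this in hand, the universal property is a composite of three standard universal properties. First, left Kan extension along $j$ identifies $\on{Fun}^L(\mathcal{P}sh(Sm^{\on{fp}}(S)^{op}),\mathcal{D})$ with $\on{Fun}(Sm^{\on{fp}}(S)^{op},\mathcal{D})$ for stable presentable $\mathcal{D}$. Since $\mathcal{D}$ is stable, presheaves of spectra are the right target, being the stabilization of the free cocompletion. Second, $\on{Fun}^L(\mathcal{C}^\vee,\mathcal{D})$ embeds fully faithfully as those colimit-preserving functors inverting the generating maps. Under the first identification, these are exactly the $F$ that preserve the terminal object and send Nisnevich and elementary blowup squares to pushouts. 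Third, for a formal inversion $\mathcal{C}^\vee[Y^{-1}]$ in $\PrL_{st}$ with $Y$ symmetric, the precomposition $\on{Fun}^L(\mathcal{C}^\vee[Y^{-1}],\mathcal{D})\to\on{Fun}^L(\mathcal{C}^\vee,\mathcal{D})$ is fully faithful onto functors sending $Y$ to an invertible object. This is Robalo's universal property in the symmetric monoidal setting. For a plain $\mathcal{D}$ one instead uses the colimit description $\mathcal{C}^\vee[Y^{-1}]\simeq\on{colim}(\mathcal{C}^\vee\xrightarrow{Y\otimes}\cdots)$, and reads the condition as $F(Y\otimes-)$ being an autoequivalence pattern. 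Composing the three gives the claimed fully faithful functor and essential image, and the symmetric monoidal statement follows by running the same argument in $\on{CAlg}(\PrL_{st})$.

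The main obstacle is the symmetry condition needed for the formal inversion: one must check that the cyclic permutation on $((\mathbb{P}^1)^{dual})^{\otimes 3}$ is homotopic to the identity in $\mathcal{C}^\vee$. I would deduce it by duality, since the dual of a symmetric object is symmetric. The permutation on $(\mathbb{P}^1)^{\otimes3}$ in the elementary-blowup-local category is the identity by \cite{AI}, \cite{AHI}, and dualizing the compact object transports this. A secondary point is verifying that dualizing the localization indeed yields the cosheaf conditions with respect to $j$. This uses that compact objects of a compactly generated localization are the idempotent completion of the images of representables, with the generating cofibers sent to the dual cofibers.
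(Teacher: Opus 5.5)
Your architecture is the paper's. You realize $\mathcal{MS}(S)^\vee$ as the co-Nisnevich/co-ebu localization of $\mathcal{P}sh(Sm^{\on{fp}}(S)^{op})$, identify $(\mathbb{P}^1)^{dual}$ with $fib(\on{L}j(\mathbb{P}^1)\to\on{L}j(S))$ as in Lemma \ref{description}, commute dual and formal inversion by Proposition \ref{formal.inversion.prop}, and compose universal properties. (A small slip: the terminal object of $Sm^{\on{fp}}(S)^{op}$ is $\emptyset$, so the condition is $\on{L}j(\emptyset)\simeq 0$, not that $j(S)$ is terminal.)

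The problem is your third step and the paragraph you call the main obstacle. Robalo's universal property (Lemma \ref{lm.universal.property.formal.inversion}, and its stable form Lemma \ref{lm.stable.universal.property}) and Proposition \ref{formal.inversion.prop} hold for an \emph{arbitrary} (compact) object. Symmetry only enters the telescope model of Proposition \ref{tel}. So the symmetric monoidal statement needs no symmetry of $(\mathbb{P}^1)^{dual}$, and the paper's argument uses none. Symmetric monoidal left adjoints out of $\mathcal{MS}(S)^\vee$ are exactly those out of the cosheaf category that invert $(\mathbb{P}^1)^{dual}$. The monoidal cosheaf universal property then converts this into the stated conditions on $F$; the localization is compatible with Day convolution because $-\times T$ preserves Nisnevich and elementary blowup squares.

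Moreover, by the paper's own account the symmetry you try to import is not available. The remark following the definition of $\mathcal{MS}$ says that symmetry of $(\mathbb{P}^1,\infty)$ relies on $\mathbb{A}^1$-invariance and that $\mathcal{MS}$ has no telescope presentation in general; this is why Annala--Iwasa use c-spectra. So your appeal to \cite{AI}, \cite{AHI} for a trivial cyclic permutation on $(\mathbb{P}^1)^{\otimes 3}$ in the elementary-blowup-local category is unjustified, and your proof as written rests on it.

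Your non-monoidal fallback fails independently of symmetry. A colimit-preserving functor out of $\on{colim}(\mathcal{C}^\vee\xrightarrow{Y\otimes-}\mathcal{C}^\vee\to\cdots)$ is a compatible sequence $(G_n)$ with $G_{n+1}\circ(Y\otimes-)\simeq G_n$. Restriction along $\mathcal{C}^\vee\to\mathcal{C}^\vee[Y^{-1}]$ only remembers $G_0$. Already for the symmetric object $\mathbb{S}(1)$ in $\mathbb{N}$-graded spectra, the formal inversion is $\mathbb{Z}$-graded spectra, and restriction is the projection $\prod_{\mathbb{Z}}\mathcal{D}\to\prod_{\mathbb{N}}\mathcal{D}$, which is not fully faithful. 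So your third step is false as stated for plain $\on{Fun}^L$, even with $Y$ symmetric. Also, ``invertible object'' has no meaning when $\mathcal{D}$ carries no monoidal structure.

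What can actually be proved, and what the paper's tools give, is the symmetric monoidal equivalence $\on{Fun}^{L,\otimes}(\mathcal{MS}(S)^\vee,\mathcal{D})\simeq\on{Fun}^{\otimes}_{\on{Nis},\on{ebu},\mathbb{P}^1}(Sm^{\on{fp}}(S)^{op},\mathcal{D})$. Drop the symmetry paragraph and the telescope fallback, and run the whole argument in $\on{CAlg}(\PrL_{st})$.
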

We check that the natural functors $(Sm^{\on{fp}})^{op}\rightarrow\on{Mot}_{\on{loc}}^{(\mathbb{A}^1)}$ satisfy the above conditions, and hence define the comparison functors.

A consequence of our universal properties is that the comparison functors are automatically strongly monoidal left adjoint functors, such functors factor formally through the category of modules $\on{Mod}_{\Phi_\ast(1)}(\mathcal{SH}^\vee)$ (resp. $\on{Mod}_{\Psi_\ast(1)}(\mathcal{MS}^\vee)$), where we use $\Phi_\ast$ (resp. $\Psi_\ast$) to denote the corresponding right adjoint functor. We are interested in the fully-faithfulness of the factored functor\[\on{Mod}_{\Phi_\ast(1)}(\mathcal{SH}^\vee)\rightarrow\on{Mot}_{\on{loc}}^{\mathbb{A}^1},\]
resp.\[\on{Mod}_{\Psi_\ast(1)}(\mathcal{MS}^\vee)\rightarrow\on{Mot}_{\on{loc}}.\]

In fact, restricted on schemes $X$ with sufficiently good dualizibility properties, both functors are fully faithful. A key difference appears when we consider the rigid generation of domains: working over a field $k$ that admits resolution of singularity, $\mathcal{SH}(k)^\vee$ is rigidly generated in the sense that it has a set of dualizable, compact generators. As a consequence, the restricted fully-faithfulness gives rise to a complete fully-faithfulness:
\begin{prop}[Proposition \ref{comparison.SH}]
Over a field $k$ and after inverting the exponential characteristic $e$, the functor\[
\widetilde{\Phi}:\on{Mod}_{\Phi_\ast(1)}(\mathcal{SH}(k)[\frac{1}{e}]^\vee)\rightarrow\on{Mot}(k)^{\mathbb{A}^1}_{\on{loc}}[\frac{1}{e}]
\]
is fully faithful on the level of mapping spectra.
\end{prop}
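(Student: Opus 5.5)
The plan is to reduce full faithfulness to a check on a set of compact, dualizable generators and then do that check by an explicit computation of mapping spectra. Write $\Phi:\mathcal{SH}(k)[\frac1e]^\vee\to\on{Mot}(k)^{\mathbb{A}^1}_{\on{loc}}[\frac1e]$ for the strongly monoidal left adjoint given by Proposition \ref{cons}, let $\Phi_\ast$ be its right adjoint, and put $A=\Phi_\ast(1)$. The factored functor $\widetilde\Phi$ sends a free module $A\otimes M$ to $\Phi(M)$.

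\textbf{Step 1: reduction to free modules on generators.}
\begin{itemize}
\item $\on{Mod}_A$ is generated under colimits by free modules $A\otimes M$, with $M$ running over a generating set of $\mathcal{SH}(k)[\frac1e]^\vee$.
\item Over a field, with $e$ inverted, Riou/Kelly-style results (de Jong--Gabber alterations, or resolution where available) show that $\mathcal{SH}(k)[\frac1e]$ is rigidly generated by Tate twists $\Sigma^\infty_+X(n)$ of smooth projective $X$.
\item By the duality computation, Proposition \ref{formal.inversion.prop}, the same holds for the dual category, with generators the images $j(X)\otimes(\mathbb{P}^1)^{dual,\otimes n}$.
\item The source objects $A\otimes M$ with $M$ compact are compact in $\on{Mod}_A$, since $A\otimes M$ is compact in $\on{Mod}_A$ whenever $M$ is compact, because $\on{Map}_A(A\otimes M,-)=\on{Map}(M,-)$.
\item The target $\Phi(M)$ is compact, since $\mathcal{U}(\on{perf}_X)$ is compact in $\on{Mot}_{\on{loc}}$ and $\on{L}_{\mathbb{A}^1}$ preserves compacts, its right adjoint being the inclusion, which commutes with filtered colimits.
\end{itemize}
Hence it suffices to show that $\widetilde\Phi$ induces equivalences on mapping spectra between such free generators. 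A standard colimit argument in both variables, using that $\widetilde\Phi$ is a colimit-preserving left adjoint, then extends this to all modules.

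\textbf{Step 2: the computation for dualizable $M$.} On free modules,
\[\on{Map}_A(A\otimes M,A\otimes N)\simeq\on{Map}(M,A\otimes N)\simeq\on{Map}(M,\Phi_\ast(1)\otimes N).\]
When $M$ is dualizable, this is $\on{Map}(1,\Phi_\ast(1)\otimes M^\vee\otimes N)$. For strongly monoidal $\Phi$ we have the projection formula $\Phi_\ast(1)\otimes P\simeq\Phi_\ast(\Phi P)$ for dualizable $P$, which gives $\on{Map}(\Phi(1),\Phi(M^\vee\otimes N))$. Since $\Phi$ preserves duals, this equals $\on{Map}(\Phi M,\Phi N)$. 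One must check that the resulting composite is the map induced by $\widetilde\Phi$; this is formal from the unit/counit description.

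\textbf{Main obstacle.} Steps 1 and 2 leave the point where dualizability is genuinely used: the rigid generation of $\mathcal{SH}(k)[\frac1e]^\vee$ together with the projection formula for $N$ not dualizable. I expect the main difficulty to be transporting rigid generation through the duality $(-)^\vee$. My plan is to use that for a compactly generated $\mathcal{C}$ the dual is $\on{Ind}((\mathcal{C}^\omega)^{op})$. Under this description dualizable compact objects of $\mathcal{C}$ correspond to their duals, so smooth projective $X$, which are self-dual up to twist by Atiyah duality, remain dualizable generators, and the generating set transfers directly. Note that Step 1 already restricts to $M,N$ among the generators, so both are dualizable and the projection formula is only needed for dualizable objects.
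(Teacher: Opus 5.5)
Your proposal is correct and is essentially the paper's proof: you compute mapping spectra between free modules $\Phi_\ast(1)\otimes M$ on compact dualizable generators using dualizability, adjunction and strong monoidality of $\Phi$, then extend to all modules using compactness of the generators and of their images (the paper cites \cite[Proposition 2.7]{Rob} for this extension, and gets compactness of the images from their dualizability together with the compactness of the unit). The differences are minor. The paper dualizes $N$ rather than $M$, i.e.\ it uses $\on{Map}(M,\Phi_\ast(1)\otimes N)\simeq\on{Map}(M\otimes N^\vee,\Phi_\ast(1))$ followed by adjunction, so it needs no projection formula; and your inclusion of the $\mathbb{P}^1$-twists $((\mathbb{P}^1)^{dual})^{\otimes n}$, $n\in\mathbb{Z}$, in the generating set is actually needed, a point the paper's list of generators $(\Sigma^\infty_+ Z)^{dual}$ passes over.
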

While in the non-$\mathbb{A}^1$-invariant case, $\mathcal{MS}(k)^\vee$ is not rigidly generated and we give an example of the failure of the fully-faithfulness in this setting. Two key ingredients here are the uncountability of mapping spectra between certain (compact) objects in $\on{Mot}_{\on{loc}}$ due to a theorem of Efimov (\cite[Theorem 9.1]{Efi25}, see also Section \ref{comparison.MS}), and the countability of mapping spectra between compact objects in general in $\mathcal{MS}(k)$:
\begin{prop}[Proposition \ref{countability}]
Let $k$ be a countable field and $e$ be the exponential characteristic of $k$. Then the full subcategory of $\mathcal{MS}(k)[\frac{1}{e}]^\vee$ spanned by compact objects is countable, in the sense that the $\pi_0$ of all the mapping spectra are countable.
\end{prop}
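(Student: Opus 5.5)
We reduce the statement to countability of mapping spectra in $\mathcal{MS}(k)[\frac{1}{e}]$ itself and then transport it across the duality. The relevant fact is that for a compactly generated stable category $\mathcal{C}$, the dual $\mathcal{C}^\vee$ is $\on{Ind}((\mathcal{C}^\omega)^{op})$. So the compact objects of $\mathcal{C}^\vee$ are the retracts of objects of $(\mathcal{C}^\omega)^{op}$, and
\[\on{map}_{\mathcal{C}^\vee}(X^{dual},Y^{dual})\simeq\on{map}_{\mathcal{C}}(Y,X)\]
for compact $X,Y$. It therefore suffices to show that $\pi_0\on{map}(X,Y)$ is countable for all compact $X,Y\in\mathcal{MS}(k)[\frac{1}{e}]$. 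Since $k$ is countable, there are only countably many isomorphism classes of objects of $Sm^{\on{fp}}(k)$. Countability of all homotopy groups of mapping spectra is closed under shifts, cofibers and retracts: a long exact sequence with countable terms has countable middle terms. Hence it suffices to treat a countable set of compact generators. These can be taken to be $\Sigma^{\infty}_{\mathbb{P}^1}X_+\otimes(\mathbb{P}^1)^{\otimes -n}$ for $X$ smooth and $n\geq 0$.

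The mapping spectra between such generators are computed as a colimit over $m$:
\[\on{map}(\Sigma^\infty X_+(-n),\Sigma^\infty Y_+(-n'))\simeq\on{colim}_m\on{map}_{\mathcal{MS}^{S^1}}\bigl(X_+\otimes(\mathbb{P}^1)^{\otimes(m-n)},\,Y_+\otimes(\mathbb{P}^1)^{\otimes(m-n')}\bigr).\]
A sequential colimit of countable groups is countable, so it is enough to bound the $\mathbb{P}^1$-prestable (or $S^1$-stable) level. There one uses that $\mathbb{P}^1$-suspensions of smooth schemes are again finite colimits of representables; for instance $X\times\mathbb{P}^1$ modulo $X\times\infty$. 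This reduces the problem to $\pi_*$ of
\[L_{\on{Nis},\on{ebu}}\Sigma^\infty_{S^1}Y_+(X)\]
for smooth $X,Y$.

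\textbf{Main obstacle.} The hard step is showing these homotopy groups are countable. The localization $L_{\on{Nis},\on{ebu}}$ is not finitary in an obvious way, and without $\mathbb{A}^1$-invariance we cannot use Morel's connectivity or strictly-$\mathbb{A}^1$-invariant sheaves. My first attempt would use the Annala–Hoyois–Iwasa description of $L_{\on{Nis},\on{ebu}}$ as a transfinite but countable composite of Nisnevich sheafification and elementary-blowup excision. Each step is a colimit over the small, countable site $Sm^{\on{fp}}(k)$, with countably many covers and squares. Presheaves with countable homotopy groups on a countable site should therefore be preserved under each step, using a countable cofinal system of hypercovers together with finite cohomological dimension of the Nisnevich topology. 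Starting from $\Sigma^\infty_{S^1}Y_+$, whose values are suspension spectra of countable sets with countable stable homotopy, this would give the claim.

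\textbf{Alternative if that fails.} Invert $e$ and use the AHI results relating $\mathcal{MS}$ to $\mathcal{SH}$ via the $\mathbb{P}^1$-bundle formula, together with the characterization of $\mathcal{MS}$-mapping spectra from $\Sigma^\infty X_+$ into objects in terms of a countable filtration. The $\mathcal{SH}$ case is known to be countable by Morel–Voevodsky theory over countable fields.
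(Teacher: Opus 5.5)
Your preliminary reductions are sound. You pass from $\mathcal{MS}(k)[\frac{1}{e}]^\vee$ to $\mathcal{MS}(k)[\frac{1}{e}]$ via $\mathcal{C}^\vee\simeq\on{Ind}((\mathcal{C}^\omega)^{op})$. Countability is closed under shifts, cofibers, retracts and sequential colimits, which also takes care of inverting $e$. And you reduce to the countably many generators $\Sigma^{-n}_{\mathbb{P}^1}\Sigma^\infty_+X$. The gap is exactly at what you call the main obstacle: you never show that $\pi_\ast$ of $\on{L}_{\on{Nis},\on{ebu}}\Sigma^\infty_{S^1}Y_+$ evaluated at $X$ is countable. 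The transfinite sheafification/excision scheme is only asserted ("should be preserved"). There is also no cohomological-dimension theory available for elementary blowup excision to make it work. The fallback through $\mathcal{SH}$ goes the wrong way: $\mathcal{SH}(k)$ is a localization of $\mathcal{MS}(k)$, and countability of mapping spectra after localizing says nothing about them before. Finally, your premise that $\on{L}_{\on{Nis},\on{ebu}}$ is "not finitary in an obvious way" is mistaken. The local objects are cut out by finite-limit conditions, so they are closed under filtered colimits. That is precisely what makes the problem easy.

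The missing idea, which is the paper's argument, is the Thomason--Neeman localization theorem (\cite[Theorem I.3.3]{NS}). $\mathcal{P}sh_{\on{Nis},\on{ebu}}(Sm^{\on{fp}}(k))$ is the localization of $\mathcal{P}sh(Sm^{\on{fp}}(k))$ at the maps $0\to j(\emptyset)$ and $j(U)\amalg_{j(W)}j(V)\to j(X)$ over all Nisnevich and elementary blowup squares. These are countably many maps between compact objects. Hence its compact objects are the idempotent completion of the Verdier quotient of $\mathcal{P}sh(Sm^{\on{fp}}(k))^\omega$ by the thick subcategory $\mathcal{A}$ generated by the cofibers of these maps. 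Mapping spectra are then given by
\[
\on{map}(\on{L}j(X),\on{L}j(Y))\simeq\on{colim}_{Z\in\mathcal{A}_{/j(Y)}}\on{map}\bigl(j(X),\on{cofib}(Z\to j(Y))\bigr),
\]
a filtered colimit over an essentially countable index category of spectra with countable homotopy groups. No sheafification or descent bounds are needed.

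Your stabilization step has a second, smaller problem. Writing the mapping spectrum as $\on{colim}_m$ of unstable mapping spectra is the telescope model, which requires $\mathbb{P}^1$ to be symmetric. The paper explicitly does not assume this for $\mathcal{MS}$. Instead it uses the c-spectra model, which is a localization of lax $\mathbb{P}^1$-spectra (whose compact mapping spectra are visibly countable) at maps between compact objects, and applies the same Verdier-quotient argument again. You would need either a reference establishing symmetry of $\mathbb{P}^1$ in $\mathcal{MS}$, or to run this localization argument on c-spectra.
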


\subsection{Acknowledgements}
This is my master thesis at Sorbonne Université under the supervision of Vova Sosnilo. I want to express my great gratitude to him for proposing the topic of the thesis, sharing with me his ideas, answering my endless questions and giving feedback on the early version of this thesis. Moreover, I thank him sincerely for bringing me a wonderful summer thinking and discussing on these beautiful maths.

I am also very grateful to Marc Hoyois and Maxime Ramzi for pointing out several mistakes and for useful comments.

\addtocontents{toc}{\protect\setcounter{tocdepth}{3}}
\section{Preliminaries}\label{section:prelims}
Every category mentioned in the thesis is an $\infty$-category by default. 

\subsection{Categorical preliminaries}
\subsubsection{Formal inversion of an object}
In this section, we recall the techniques on formally inverting objects in a symmetric monoidal category. In the $\infty$-category setting, this is due to Robalo in \cite{Rob}. For a presentably symmetric monoidal category $\mathcal{C}$ and an object $X$ in $\mathcal{C}$, we want to formally inverting $X$ in a universal way. More precisely, we are hoping to find a symmetric monoidal category $\mathcal{C}[X^{-1}]$ and a strongly monoidal functor\[
\on{L}^{\mathcal{C}}_X: \mathcal{C}\rightarrow\mathcal{C}[X^{-1}],\]
such that $\on{L}^{\mathcal{C}}_X$ sends $X$ to an invertible object and is initial among all such functors.

The category of spectra $\on{Sp}$ is an example of this construction: it is the formal inversion of the category of pointed anima $\on{An}_\ast$ with respect to $(\mathbb{S}^1,1)$. Recall that besides the universal characterization, $\on{Sp}$ admits an explicit model\[
\on{Sp}=colim(\cdots\xrightarrow{-\otimes\mathbb{S}^1}\on{An}_\ast\xrightarrow{-\otimes\mathbb{S}^1}\on{An}_\ast\xrightarrow{-\otimes\mathbb{S}^1}\on{An}_\ast),\]
in which the colimit is computed in $\PrL$. Since this explicit model would be useful in several aspects, we also want to equip general formal inversions with models of the same kind. It turns out that with additional symmetric assumptions on the inverted object $X$, the model could be described in a similar way as that of $\on{Sp}$ above, which we will call a telescope model. While for more general $X$, the model is due to Annala and Iwasa in \cite{AI}, which is more subtle.

We begin from the case when $\mathcal{C}$ is a small symmetric monoidal category. That is the same data as an object in the category $\on{CAlg(\mathsf{Cat})}$, where $\mathsf{Cat}$ is equipped with the Cartesian monoidal structure. Recall that we have an equivalence of categories (see \cite[Proposition 3.16]{Rob})\[
\on{CAlg}(\on{Mod}_\mathcal{C}(\mathsf{Cat}))\simeq \on{CAlg}(\mathsf{Cat})_{\mathcal{C}/}.\]
We denote the subcategory of $\on{CAlg}(\mathsf{Cat})_{\mathcal{C}/}$ spanned by functors sending $X$ to an invertible object by $\on{CAlg}(\mathsf{Cat})_{\mathcal{C}/}^X$. This subcategory can be understood as a category of local objects. In fact, recall the adjunction\[
\on{Free}:\on{Mod}_\mathcal{C}(\mathsf{Cat})\rightleftarrows \on{CAlg}(\mathsf{Cat})_{\mathcal{C}/}:\on{U}.
\]
Let $\mathcal{S}_X$ be the collection of morphisms in $\on{CAlg}(\mathsf{Cat})_{\mathcal{C}/}$ consisting of a single map \[\on{Free}(X\otimes -):\on{Free}(\mathcal{C})\rightarrow\on{Free}(\mathcal{C}).\] We have
\begin{lm}(\cite[Proposition 4.1]{Rob})
The subcategory of $\on{CAlg}(\mathsf{Cat})_{\mathcal{C}/}$ spanned by the $\mathcal{S}_X$-local objects, in the sense of \cite[Definition 5.5.4.1]{HTT}, is equivalent to $\on{CAlg}(\mathsf{Cat})_{\mathcal{C}/}^X$. The inclusion\[
\on{CAlg}(\mathsf{Cat})_{\mathcal{C}/}^X\hookrightarrow \on{CAlg}(\mathsf{Cat})_{\mathcal{C}/}\]
admits a left adjoint (Bousfield localization with respect to $\mathcal{S}_X$), which we denote also by $(-)[X^{-1}]$.
\end{lm}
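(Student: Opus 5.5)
We prove the two assertions of the lemma separately: first that $\mathcal{S}_X$-locality is the same as invertibility of $X$, and then that the local objects form a reflective subcategory.

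\textbf{Step 1: identify the $\mathcal{S}_X$-local objects.} Let $f:\mathcal{C}\to\mathcal{D}$ be an object of $\on{CAlg}(\mathsf{Cat})_{\mathcal{C}/}$. By the free--forgetful adjunction, the map
\[\on{Map}(\on{Free}(\mathcal{C}),\mathcal{D})\xrightarrow{(X\otimes-)^\ast}\on{Map}(\on{Free}(\mathcal{C}),\mathcal{D})\]
is identified with the map
\[\on{Map}_{\on{Mod}_\mathcal{C}}(\mathcal{C},U\mathcal{D})\to\on{Map}_{\on{Mod}_\mathcal{C}}(\mathcal{C},U\mathcal{D}).\]
Since $\mathcal{C}$ is the unit $\mathcal{C}$-module, a $\mathcal{C}$-linear functor $\mathcal{C}\to U\mathcal{D}$ is determined by where it sends the unit. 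Thus the source is equivalent to the core $\mathcal{D}^{\simeq}$, at least up to taking maximal subgroupoids; to handle categorical rather than anima mapping spaces one can work with the enhanced mapping categories or with $\on{Fun}(\Delta^1,-)$. Under this identification the map is $d\mapsto f(X)\otimes d$. So $\mathcal{D}$ is $\mathcal{S}_X$-local exactly when $f(X)\otimes-$ is an equivalence on $\mathcal{D}^\simeq$, together with the analogous statement for arrows.

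\textbf{Step 2: equivalence with invertibility.} Suppose first that $f(X)$ is invertible. Then $f(X)\otimes-$ is an autoequivalence of $\mathcal{D}$, so $\mathcal{D}$ is local. Conversely, suppose $f(X)\otimes-$ is essentially surjective. Choose $Y$ with $f(X)\otimes Y\simeq 1$. Symmetry of the monoidal structure then makes $Y$ a two-sided inverse, so $f(X)$ is invertible. Hence the $\mathcal{S}_X$-local objects are precisely the objects of $\on{CAlg}(\mathsf{Cat})^X_{\mathcal{C}/}$.

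\textbf{Step 3: existence of the left adjoint.} We apply the localization theorem of \cite[Proposition 5.5.4.15]{HTT}. Its hypotheses are that $\on{CAlg}(\mathsf{Cat})_{\mathcal{C}/}$ is presentable and that $\mathcal{S}_X$ is a small set of morphisms. Presentability holds because $\mathsf{Cat}$ is presentable with cartesian product commuting with colimits separately in each variable, so $\on{CAlg}(\mathsf{Cat})$ is presentable, and so is its undercategory. The set $\mathcal{S}_X$ consists of a single morphism, so it is small. The theorem then gives a left adjoint to the inclusion, and we denote it $(-)[X^{-1}]$.

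The main obstacle is the precise meaning of $\mathcal{S}_X$-local here. In \cite[Definition 5.5.4.1]{HTT} locality is tested on mapping anima. On mapping anima, invertibility of $f(X)$ is detected only after Step 2, which uses essential surjectivity on cores, and that is available. I therefore expect the anima-level test to suffice, and Steps 1--3 go through as stated.
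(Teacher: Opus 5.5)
Your proposal is correct and follows essentially the route of the proof in \cite{Rob} that the paper defers to. Via the free--forgetful adjunction, $\mathcal{S}_X$-locality of $f:\mathcal{C}\to\mathcal{D}$ becomes the condition that $f(X)\otimes-$ is an equivalence of $\mathcal{D}^{\simeq}$; essential surjectivity plus symmetry then gives invertibility; and the left adjoint comes from \cite[Proposition 5.5.4.15]{HTT}, using presentability of $\on{CAlg}(\mathsf{Cat})_{\mathcal{C}/}$ for small $\mathcal{C}$.

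The only blemish is your hedge in Step 1 about also testing ``on arrows''. Locality in the sense of \cite[Definition 5.5.4.1]{HTT} is exactly the mapping-anima condition, and your Step 2 already shows that this condition alone is equivalent to invertibility of $f(X)$, so that remark can be deleted.
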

We call the unit $\on{L}_X(\mathcal{D}): \mathcal{D}\rightarrow \mathcal{D}[X^{-1}]$ of the localization above the formal inversion of $X$ in $\mathcal{D}$, and it is equipped the desired universal property:
\begin{lm}(\cite[Proposition 4.2]{Rob})
$\on{L}_X(\mathcal{C})$ induces a fully-faithful map of categories of algebras\[
\on{CAlg}(\mathsf{Cat})_{\mathcal{C}[X^{-1}]/}\hookrightarrow\on{CAlg}(\mathsf{Cat})_{\mathcal{C}/}\]
with essential image exactly $\on{CAlg}(\mathsf{Cat})^X_{\mathcal{C}/}$.
\end{lm}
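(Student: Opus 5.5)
Since $\mathcal{C}[X^{-1}]$ is defined as the left adjoint of the inclusion $\on{CAlg}(\mathsf{Cat})^X_{\mathcal{C}/}\hookrightarrow\on{CAlg}(\mathsf{Cat})_{\mathcal{C}/}$ evaluated at the initial object $\mathrm{id}_{\mathcal{C}}$, I will argue by adjunction and general facts about slice categories. The plan has three steps.

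\textbf{First, identify the slice.} The slice $\on{CAlg}(\mathsf{Cat})_{\mathcal{C}[X^{-1}]/}$ is equivalent, via precomposition with $\on{L}_X(\mathcal{C})$, to the slice $(\on{CAlg}(\mathsf{Cat})_{\mathcal{C}/})_{\on{L}_X(\mathcal{C})/}$ of the under-category. Here $\on{L}_X(\mathcal{C})$ is viewed as an object $\mathcal{C}\to\mathcal{C}[X^{-1}]$ of $\on{CAlg}(\mathsf{Cat})_{\mathcal{C}/}$. This is the standard equivalence $(\mathcal{E}_{a/})_{f/}\simeq\mathcal{E}_{b/}$ for $f\colon a\to b$ (\cite[Lemma 1.2.9.3 / Prop. 2.1.2.5]{HTT}-style).

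\textbf{Second, show the forgetful functor is fully faithful.} The forgetful functor $(\on{CAlg}(\mathsf{Cat})_{\mathcal{C}/})_{\on{L}_X/}\to\on{CAlg}(\mathsf{Cat})_{\mathcal{C}/}$ is fully faithful onto the $\mathcal{S}_X$-local objects. To see this, let $\mathcal{A}=\on{CAlg}(\mathsf{Cat})_{\mathcal{C}/}$ and let $L$ be the Bousfield localization of the preceding lemma. Note that $\on{L}_X(\mathcal{C})$ is the unit $e\to Le$ at the initial object $e=(\mathcal{C},\mathrm{id})$.

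For an object $d$ of $\mathcal{A}$, the fiber of $\on{Map}_{\mathcal{A}}(Le,d)\to\on{Map}_{\mathcal{A}}(e,d)\simeq\ast$ over the unique point is $\on{Map}_{\mathcal{A}}(Le,d)$. If $d$ is local, this is $\on{Map}(e,d)\simeq\ast$ by the universal property of localization. Hence the fiber of $\mathcal{A}_{Le/}\to\mathcal{A}$ over a local $d$ is contractible. For mapping spaces in $\mathcal{A}_{Le/}$ between $(Le\to d)$ and $(Le\to d')$, the fiber sequence
\[\on{Map}_{\mathcal{A}_{Le/}}\to\on{Map}_{\mathcal{A}}(d,d')\to\on{Map}_{\mathcal{A}}(Le,d')\]
has contractible base when $d'$ is local, which gives full faithfulness on objects under $Le$ whose target is local.

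\textbf{Third, identify the essential image.} It remains to see that every $d$ admitting a map from $Le$ is local; this is the key point. A functor $\mathcal{C}[X^{-1}]\to\mathcal{D}$ that is symmetric monoidal sends the invertible object $\on{L}_X(X)$ to an invertible object. Hence the composite $\mathcal{C}\to\mathcal{D}$ inverts $X$, i.e.\ it lies in $\on{CAlg}(\mathsf{Cat})^X_{\mathcal{C}/}$. Here I use that invertibility is preserved by symmetric monoidal functors, and that $\on{L}_X(X)$ is invertible because $Le$ is local and the previous lemma identifies local objects with $X$-inverting ones. Conversely, every local object receives an essentially unique map from $Le$, so the essential image is exactly $\on{CAlg}(\mathsf{Cat})^X_{\mathcal{C}/}$.

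The main obstacle is the bookkeeping in the first step, namely the identification of iterated slices. I would handle it via the equivalence $\mathcal{E}_{b/}\simeq(\mathcal{E}_{a/})_{f/}$, which is a trivial fibration argument, together with \cite[Proposition 3.16]{Rob} to pass between commutative algebras under $\mathcal{C}$ and $\mathcal{C}$-modules if needed. Everything else is formal from the previous lemma.
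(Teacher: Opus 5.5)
Your proposal is correct and matches the standard argument: identify $\on{CAlg}(\mathsf{Cat})_{\mathcal{C}[X^{-1}]/}$ with the slice under the unit $e\to Le$, use $\on{Map}(Le,d)\simeq\on{Map}(e,d)\simeq\ast$ for local $d$, and then show that any target receiving a map from $Le$ inverts $X$, because symmetric monoidal functors preserve invertible objects. You rightly flag that last step as the one non-formal input, since a general Bousfield localization does not force objects under $Le$ to be local. The paper itself gives no proof and simply cites \cite{Rob}; your argument fills in that citation.
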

We then make slight modification to adapt the presentable setting.
\begin{defn}\label{def.formal.inversion}
Let $free(\ast)$ be the free small symmetric monoidal category ($=$free commutative algebra in $\mathsf{Cat}^\times$) generated by $\left\{\ast\right\}$. Let $\mathcal{C}$ be a presentably symmetric monoidal category and $X$ be an object in $\mathcal{C}$. The formal inversion of $X$ in $\mathcal{C}$ is the presentably symmetric monoidal category $\mathcal{C}[X^{-1}]$ defined by the pushout
\[\begin{tikzcd}
	{Psh(free(\ast))} && {Psh(free(\ast)[\ast^{-1}])} \\
	\\
	{\mathcal{C}} && {\mathcal{C}[X^{-1}]}
	\arrow["\on{Lan}(j\circ\on{L}_\ast)", from=1-1, to=1-3]
	\arrow["X", from=1-1, to=3-1]
	\arrow[from=1-3, to=3-3]
	\arrow[from=3-1, to=3-3]
\end{tikzcd}\]
in $\on{CAlg}(\PrL)$, where the left arrow is induced essentially by the universal property of the category of presheaves as a presentable symmetric monoidal category.
\end{defn}
This presentable version is equipped with the following universal property:
\begin{lm}(Proposition 4.10 [Rob])\label{lm.universal.property.formal.inversion}
The inclusion $\on{CAlg}(\PrL)_{\mathcal{C}[X^{-1}]/} \hookrightarrow \on{CAlg}(\PrL)_{\mathcal{C}/}$ is fully-faithful with essential image $\on{CAlg}(\PrL)^X_{\mathcal{C}/}$.
\end{lm}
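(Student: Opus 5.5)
The plan is to reduce the statement to the small-category result (\cite[Proposition 4.2]{Rob}, quoted above) using the pushout of Definition \ref{def.formal.inversion} and the universal property of pushouts in $\on{CAlg}(\PrL)$. Throughout, $\on{CAlg}(\PrL)_{\mathcal{C}/}$ is an undercategory. For any pushout $\mathcal{C}[X^{-1}]=\mathcal{C}\otimes_{Psh(free(\ast))}Psh(free(\ast)[\ast^{-1}])$ in $\on{CAlg}(\PrL)$ we get an equivalence
\[\on{CAlg}(\PrL)_{\mathcal{C}[X^{-1}]/}\simeq \on{CAlg}(\PrL)_{\mathcal{C}/}\times_{\on{CAlg}(\PrL)_{Psh(free(\ast))/}}\on{CAlg}(\PrL)_{Psh(free(\ast)[\ast^{-1}])/}.\]
This follows from the general fact that undercategories of a pushout are fiber products of undercategories. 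Hence it suffices to prove the special case $\mathcal{C}=Psh(free(\ast))$, $X=j(\ast)$. In that case the claim is that
\[\on{CAlg}(\PrL)_{Psh(free(\ast)[\ast^{-1}])/}\to\on{CAlg}(\PrL)_{Psh(free(\ast))/}\]
is fully faithful with image the objects sending $j(\ast)$ to an invertible object. Full faithfulness and the essential-image description are both stable under base change along the fiber product. An object $\mathcal{C}\to\mathcal{D}$ lies in the image exactly when the composite $Psh(free(\ast))\to\mathcal{C}\to\mathcal{D}$ does, and that composite sends $j(\ast)$ to the image of $X$. So the general case follows formally.

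For the special case, I would use the universal property of presheaves in $\on{CAlg}(\PrL)$ (Day convolution, \cite[4.8.1]{HA}). For a small symmetric monoidal category $\mathcal{A}$, left Kan extension gives
\[\on{Map}_{\on{CAlg}(\PrL)}(Psh(\mathcal{A}),\mathcal{D})\simeq\on{Map}_{\on{CAlg}(\mathsf{Cat})}(\mathcal{A},\mathcal{D}),\]
with $\mathcal{D}$ regarded as a large symmetric monoidal category. This identification is natural in $\mathcal{A}$, so it also holds for undercategories. Applying it with $\mathcal{A}=free(\ast)$ and $\mathcal{A}=free(\ast)[\ast^{-1}]$ transfers the question to the map $\on{Map}(free(\ast)[\ast^{-1}],\mathcal{D})\to\on{Map}(free(\ast),\mathcal{D})$ in $\on{CAlg}(\mathsf{Cat})$. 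By \cite[Proposition 4.2]{Rob} (for $\mathcal{C}=free(\ast)$, allowing a large target after enlarging the universe), this map is an inclusion of components. Its image consists of the functors sending $\ast$ to an invertible object, and such a functor corresponds under the equivalence to $j(\ast)\mapsto$ an invertible object. Fully faithfulness of the functor between undercategories then follows from the mapping-space equivalence, because mapping spaces in undercategories are fibers of mapping spaces.

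The main obstacle I expect is making the undercategory and fiber-product argument precise at the level of $\infty$-categories rather than just mapping spaces. One also has to check that the universe enlargement needed to apply the small-category result to presentable $\mathcal{D}$ is harmless. I would handle both by working with mapping spaces throughout: a functor between undercategories is fully faithful exactly when, for all objects, the induced maps on fibers of mapping spaces are equivalences. Everything then reduces to the equivalences of mapping spaces already obtained.
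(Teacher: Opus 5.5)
Your proposal is correct and is essentially the argument behind the cited \cite[Proposition 4.10]{Rob}; the paper quotes that result without proving it. In both, the pushout of Definition \ref{def.formal.inversion} reduces the claim to the universal case $Psh(free(\ast))\to Psh(free(\ast)[\ast^{-1}])$, where the Day convolution universal property of presheaves turns it into the small-case result \cite[Proposition 4.2]{Rob} applied in a larger universe. Your other steps also hold: the undercategory fiber-product reduction, the equivalence between fully faithful functors on undercategories and inclusions of components of mapping spaces, and your remark that the universe enlargement is harmless.
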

In general, for a presentably symmetric monoidal category $\mathcal{C}$ and $X$ an object, there is a model of the formal inversion $\mathcal{C}[X^{-1}]$ as a $\mathcal{C}^\otimes$-module called c-spectra defined and studied in \cite{AI} , which we denote by $\on{Sp}_X(\mathcal{C})$. Before recalling their construction, we want to mention that, in the special case when $X$ is symmetric in the sense that the cyclic permutation of $X\otimes X\otimes X$ is homotopic to $\on{id}_{X\otimes X\otimes X}$, the c-spectrum model reduces to  the classic telescope model.
\begin{prop}(\cite[Proposition 4.21]{Rob}, \cite[Proposition 1.6.3]{AI})\label{tel}
Let $\mathcal{C}$ be a presentably symmetric monoidal category and $X$ be a symmetric object. We have a well-defined $\mathcal{C}$-module:\[
Tel_X(\mathcal{C}):=colim(\cdots\rightarrow\mathcal{C}\xrightarrow{X}\mathcal{C}\xrightarrow{X}\mathcal{C}),\]
where the colimit is computed in $\on{Mod}_\mathcal{C}(\PrL)$. Moreover,\[
\mathcal{C}[X^{-1}]\simeq Tel_X(\mathcal{C})\simeq\on{Sp}_X(\mathcal{C})\]
as $\mathcal{C}$-modules.
\end{prop}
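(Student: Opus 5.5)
The plan is to prove that $Tel_X(\mathcal{C})$ is a well-defined $\mathcal{C}$-module and satisfies the universal property of Lemma \ref{lm.universal.property.formal.inversion}. The identification with $\on{Sp}_X(\mathcal{C})$ would then follow from the Annala–Iwasa result that $\on{Sp}_X(\mathcal{C})\simeq\mathcal{C}[X^{-1}]$ in general.

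First, the colimit $Tel_X(\mathcal{C})$ makes sense in $\on{Mod}_\mathcal{C}(\PrL)$. Each map $X\otimes-:\mathcal{C}\to\mathcal{C}$ is $\mathcal{C}$-linear, since the symmetric monoidal structure makes it a map of $\mathcal{C}$-modules. Colimits in $\on{Mod}_\mathcal{C}(\PrL)$ exist and are computed in $\PrL$, because $\PrL$ is presentable and $\mathcal{C}\otimes-$ preserves colimits. Concretely, $Tel_X(\mathcal{C})$ is the limit in $\widehat{\mathsf{Cat}}$ of the tower $\cdots\to\mathcal{C}\xrightarrow{\on{Hom}(X,-)}\mathcal{C}$ of right adjoints, so it is presentable with a $\mathcal{C}$-action. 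There is a canonical $\mathcal{C}$-linear functor $\Sigma^\infty_X:\mathcal{C}\to Tel_X(\mathcal{C})$, the inclusion of the last stage.

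Second, I show that $Tel_X(\mathcal{C})$ inherits a symmetric monoidal structure for which $\Sigma^\infty_X$ is symmetric monoidal. This is where the symmetry hypothesis enters, and it is the main obstacle. Following Robalo, I would check that the cyclic-permutation condition gives a natural $\mathcal{C}$-linear identification between the endomorphism $X\otimes-$ of $Tel_X(\mathcal{C})$ and the shift autoequivalence of the diagram. It also shows that the colimit is compatible with the colimit computing $Tel_X(\mathcal{C})\otimes_\mathcal{C}Tel_X(\mathcal{C})$ along the diagonal. The key computation is that the two ways of commuting $X\otimes X$ past each other differ by the transposition, and the cyclic condition makes the resulting $\mathbb{Z}$-indexed diagram coherent up to a cofinal reindexing. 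Equivalently, $Tel_X(\mathcal{C})$ is an idempotent $\mathcal{C}$-algebra. To show this, I would prove that the multiplication map $Tel_X(\mathcal{C})\otimes_\mathcal{C}Tel_X(\mathcal{C})\to Tel_X(\mathcal{C})$ is an equivalence by reducing to the statement that the cyclic permutation acts trivially on the colimit. An idempotent algebra carries a unique commutative algebra structure, which gives the symmetric monoidal structure.

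Third, I verify that $X$ becomes invertible in $Tel_X(\mathcal{C})$. Tensoring with $X$ is the shift of the telescope, so it is an equivalence with inverse the opposite shift. The remaining point is that this shift is $X\otimes-$ and not some twisted version of it, and the same symmetry argument from the second step handles this.

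Finally, I check the universal property. For $\mathcal{D}\in\on{CAlg}(\PrL)_{\mathcal{C}/}$ in which $X$ is invertible, base change gives $\mathcal{D}\otimes_\mathcal{C}Tel_X(\mathcal{C})\simeq Tel_X(\mathcal{D})$. Since $X$ is already invertible in $\mathcal{D}$, this is a colimit of equivalences and hence equivalent to $\mathcal{D}$. Combined with idempotence, this shows that $Tel_X(\mathcal{C})$ is the localization whose local objects are exactly $\on{CAlg}(\PrL)^X_{\mathcal{C}/}$. By Lemma \ref{lm.universal.property.formal.inversion} this identifies $Tel_X(\mathcal{C})$ with $\mathcal{C}[X^{-1}]$, and the identification with $\on{Sp}_X(\mathcal{C})$ is then quoted from \cite{AI}.
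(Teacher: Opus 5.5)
The paper does not prove this statement; it quotes it from \cite[Proposition 4.21]{Rob} and \cite[Proposition 1.6.3]{AI}. Measured against the standard argument, your plan is correct in outline. The symmetry hypothesis is needed at exactly one point: to show that $X\otimes-$ is an autoequivalence of $Tel_X(\mathcal{C})$. Everything else is formal. Organizing the formal part through idempotent objects \cite[\S 4.8.2]{HA} is a clean choice, because it gives the commutative algebra structure on $Tel_X(\mathcal{C})$ and its universal property at once. Your Step 4 is the identification of the essential image in that theory; the converse direction follows from Step 3. Quoting \cite{AI} for $\on{Sp}_X(\mathcal{C})\simeq\mathcal{C}[X^{-1}]$ matches how the paper itself uses that reference.

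Two points need repair.

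\emph{Step 2 is circular as written.} There is no multiplication $Tel_X(\mathcal{C})\otimes_{\mathcal{C}}Tel_X(\mathcal{C})\to Tel_X(\mathcal{C})$ before an algebra structure exists, and building one along the diagonal runs straight into the coherence problem you want to avoid. Idempotence is a condition on the unit $\eta\colon\mathcal{C}\to Tel_X(\mathcal{C})$. Since $\otimes_{\mathcal{C}}$ preserves colimits, $Tel_X(\mathcal{C})\otimes_{\mathcal{C}}\eta$ is the inclusion of the initial term of $\on{colim}(Tel_X(\mathcal{C})\xrightarrow{X}Tel_X(\mathcal{C})\xrightarrow{X}\cdots)$. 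This is an equivalence as soon as $X$ acts invertibly on $Tel_X(\mathcal{C})$, so Step 2 is a corollary of Step 3 and needs no separate symmetry argument.

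\emph{Step 3 is only gestured at.} Here is a precise form of your ``cofinal reindexing''. Let $D$ be the telescope diagram; it is indexed by $\mathbb{N}$, not $\mathbb{Z}$.
\begin{enumerate}
\item The action of $X$ on $\on{colim} D$ is the colimit of a transformation $a\colon D\Rightarrow D$ with components $X\otimes-$ and naturality 2-cells $\tau_{X,X}$.
\item The transformation $t\colon D\Rightarrow D\circ(-+1)=D$ given by the transition maps has the same components but trivial 2-cells. Its colimit is an equivalence by cofinality.
\item On the cofinal subdiagram indexed by $2\mathbb{N}$, the pasted 2-cells of $a$ and $t$ differ by a cyclic permutation of $X^{\otimes 3}$.
\item Because $\mathbb{N}$ is free on its generating arrows, a transformation of $\mathbb{N}$-diagrams is determined up to equivalence by its components and its 2-cells. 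There is no higher data.
\item These 2-cells are loops in $\on{Map}_{\on{Mod}_{\mathcal{C}}(\PrL)}(\mathcal{C},\mathcal{C})\simeq\mathcal{C}^{\simeq}$. So the bare hypothesis that the cyclic permutation is homotopic to the identity gives $a|_{2\mathbb{N}}\simeq t|_{2\mathbb{N}}$, and hence $\on{colim}(a)$ is an equivalence.
\end{enumerate}

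A minor correction to Step 1: $\PrL$ is not presentable. Colimits in $\on{Mod}_{\mathcal{C}}(\PrL)$ are computed in $\PrL$ because the tensor product on $\PrL$ preserves colimits in each variable.
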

Now we recall the construction of c-spectra, following closely from \cite[Section 1.3]{AI}.

Let $\mathcal{C}$ be a presentably symmetric monoidal category, and let $B\Sigma_{\mathbb{N}}$ be the free commutative monoid in Anima with a single generator $e$. We consider the lax symmetric monoidal functor
\[(-)^{\Sigma}:=\on{Fun}(B\Sigma_{\mathbb{N}},-).\]
Then $\mathcal{C}^{\Sigma}$ is a presentably symmetric monoidal category given by Day convolution. We consider the adjunctions
\begin{itemize}
\item The left adjoint functor $F: \mathcal{C}\rightarrow\mathcal{C}^{\Sigma}$ is induced by the left Kan extension along the morphism $\ast\rightarrow B\Sigma_{\mathbb{N}}$ of commutative monoids. We denote its right adjoint by $U$;
\item The left adjoint functor $s_+:\mathcal{C}^{\Sigma}\rightarrow\mathcal{C}^{\Sigma}$ is induced by the left Kan extension along the morphism $e:B\Sigma_{\mathbb{N}}\rightarrow B\Sigma_{\mathbb{N}}$ of $B\Sigma_{\mathbb{N}}$-modules. We denote its right adjoint by $s_-$.
\end{itemize}
\begin{defn}\label{lax.}
Let $S_c$ be the free commutative algebra in $\mathcal{C}^{\Sigma}$ generated by $s_+\circ F(c)$. We define\[
\on{Sp}^{lax}_c(\mathcal{C}):=\on{Mod}_{S_c}(\mathcal{C}^{\Sigma}),\]
and call it the category of lax c-spectra in $\mathcal{C}$
\end{defn}
We consider $S_c$-linear version of adjunctions above, with a slight abuse of notations
\begin{itemize}
\item The left adjoint functor $F: \mathcal{C}\rightarrow\on{Sp}^{lax}_c(\mathcal{C})$ is induced by tensoring the $F: \mathcal{C}\rightarrow\mathcal{C}^{\Sigma}$ above with $S_c$. We denote its right adjoint by $U$;
\item The left adjoint functor $s_+:\on{Sp}^{lax}_c(\mathcal{C})\rightarrow\on{Sp}^{lax}_c(\mathcal{C})$ is induced by the $s_+:\mathcal{C}^{\Sigma}\rightarrow\mathcal{C}^{\Sigma}$ above. We denote its right adjoint by $s_-$.
\end{itemize}
\begin{defn}(\cite[Lemma 1.3.6, Construction 1.3.7, Definition 1.3.8]{AI})
A c-spectrum in $\mathcal{C}$ is a lax c-spectrum $E$ in $\mathcal{C}$ such that the adjoint\[
\sigma_E^{\sharp}:E\rightarrow (s_-E)^c\]
of the map induced by multiplication by $s_+\circ F(c)$\[
\sigma_E:s_+(c\otimes E)\rightarrow E\]
is an equivalence.

We denote by $\on{Sp}_c(\mathcal{C})$ the full subcategory of $\on{Sp}^{lax}_c(\mathcal{C})$ spanned by c-spectra.
\end{defn}

This construction indeed gives a model of formal inversion. To be precise, there is a $\mathcal{C}$-linear equivalence $\mathcal{C}[c^{-1}]\simeq\on{Sp}_c(\mathcal{C})$. See \cite[Proposition 1.3.14]{AI}.

In the following parts of the thesis, we are mostly interested in the stable presentable setting. One of the consequences of these explicit models is that, we could see, if we input a stable, presentably symmetric monoidal category, the output formal inversion would automatically be stable with the desired universal property in the world of stable categories:
\begin{lm}\label{lm.stable.universal.property}
Let $\mathcal{C}$ be a stable, presentably symmetric monoidal category, $X$ be an object in $\mathcal{C}$. Then the formal inversion $\mathcal{C}[X^{-1}]$ defined in Definition \ref{def.formal.inversion} is again stable. Moreover, this induces a restriction on the fully faithful embedding in Lemma \ref{lm.universal.property.formal.inversion}, that is, we have a fully faithful functor\[\on{CAlg}(\PrL_{st})_{\mathcal{C}[X^{-1}]/} \hookrightarrow \on{CAlg}(\PrL_{st})_{\mathcal{C}/}\]
with essential image $\on{CAlg}(\PrL_{st})^X_{\mathcal{C}/}$.
\end{lm}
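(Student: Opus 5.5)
The argument has two parts: first show that $\mathcal{C}[X^{-1}]$ is stable, using the explicit c-spectrum model; then deduce the stable universal property formally from Lemma \ref{lm.universal.property.formal.inversion}, because $\on{CAlg}(\PrL_{st})$ is a full subcategory of $\on{CAlg}(\PrL)$.

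\textbf{Stability.} By \cite[Proposition 1.3.14]{AI} there is a $\mathcal{C}$-linear equivalence $\mathcal{C}[X^{-1}]\simeq \on{Sp}_X(\mathcal{C})$, so it suffices to show that $\on{Sp}_X(\mathcal{C})$ is stable. I would proceed in three steps.

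1. $\mathcal{C}^\Sigma=\on{Fun}(B\Sigma_{\mathbb{N}},\mathcal{C})$ is stable, since functor categories into a stable category are stable.
2. $\on{Sp}^{lax}_X(\mathcal{C})=\on{Mod}_{S_X}(\mathcal{C}^\Sigma)$ is stable. The forgetful functor to $\mathcal{C}^\Sigma$ is conservative and creates all limits and colimits, because $S_X\otimes-$ is exact in each variable. Hence the category has a zero object and finite limits and colimits, and pushout squares coincide with pullback squares.
3. $\on{Sp}_X(\mathcal{C})$ is the full subcategory on which $\sigma^\sharp_E\colon E\to (s_-E)^X$ is an equivalence. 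Both $s_-$ (a right adjoint) and the internal hom $(-)^X$ are exact functors between stable categories. Therefore the condition is closed under finite limits, finite colimits and shifts, and $\on{Sp}_X(\mathcal{C})$ is a stable subcategory.

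An alternative route avoids the model entirely. The category $\mathcal{C}[X^{-1}]$ receives a left adjoint from $\mathcal{C}$, and it is a $\mathcal{C}$-module in $\PrL$, hence a $\on{Sp}$-module, since $\mathcal{C}$ is an $\on{Sp}$-algebra. It is also presentable. Since $\PrL_{st}=\on{Mod}_{\on{Sp}}(\PrL)$ is a full subcategory of $\PrL$, this gives stability directly. I would likely present this argument as the main one, with the model as a check: $\mathcal{C}$ is stable if and only if $\on{Sp}\otimes\mathcal{C}\simeq\mathcal{C}$, and the pushout defining $\mathcal{C}[X^{-1}]$ produces a $\mathcal{C}$-algebra, hence an $\on{Sp}$-algebra.

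\textbf{Universal property.} Since $\PrL_{st}\subset\PrL$ is a full symmetric monoidal subcategory, closed under the tensor product and given by $\on{Sp}$-modules ($\on{Sp}$ is an idempotent algebra), the inclusion $\on{CAlg}(\PrL_{st})\subset\on{CAlg}(\PrL)$ is fully faithful. Consequently $\on{CAlg}(\PrL_{st})_{\mathcal{D}/}\to\on{CAlg}(\PrL)_{\mathcal{D}/}$ is fully faithful for any stable $\mathcal{D}$. Now consider the square formed by the inclusions $\on{CAlg}(\PrL_{st})_{\mathcal{C}[X^{-1}]/}\hookrightarrow\on{CAlg}(\PrL)_{\mathcal{C}[X^{-1}]/}$ and $\on{CAlg}(\PrL_{st})_{\mathcal{C}/}\hookrightarrow\on{CAlg}(\PrL)_{\mathcal{C}/}$, together with the restrictions along $\on{L}_X$. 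The right vertical map is fully faithful by Lemma \ref{lm.universal.property.formal.inversion}, and the horizontal maps are fully faithful. Hence the left vertical map is fully faithful.

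Its essential image is computed as follows.

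1. A stable $\mathcal{E}$ under $\mathcal{C}$ in which $X$ becomes invertible lies, by Lemma \ref{lm.universal.property.formal.inversion}, in the image of a map from $\mathcal{C}[X^{-1}]$.
2. That lift automatically lies in the stable subcategory, because the target $\mathcal{E}$ is stable and fullness is defined by the target.
3. Conversely, every object in the image inverts $X$.

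So the essential image is $\on{CAlg}(\PrL_{st})^X_{\mathcal{C}/}$.

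\textbf{Main obstacle.} The only non-formal input is stability of $\mathcal{C}[X^{-1}]$. Via the module argument this reduces to the standard fact that presentable modules over a stable presentably monoidal category are stable. I would justify this by noting that $\Sigma$ acts on any $\mathcal{C}$-module $\mathcal{M}$ as $(\Sigma 1_{\mathcal{C}})\otimes-$, which is invertible with inverse $(\Omega 1_{\mathcal{C}})\otimes-$. The module $\mathcal{M}$ is pointed because $0\otimes-$ yields a zero object. Thus $\Sigma$ is an equivalence on a pointed presentable category, which makes it stable.
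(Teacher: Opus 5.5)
Your proof is correct. The universal-property half is the same reduction the paper makes, and the stability half follows a different main route.

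\textbf{Universal property.} The paper compresses this to one sentence: since the monoidal structures on $\PrL_{st}$ and $\PrL$ are compatible, it suffices to show that $\mathcal{C}[X^{-1}]$ lies in $\PrL_{st}$. Your appeal to the idempotence of $\on{Sp}$ is the right justification for the full faithfulness of $\on{CAlg}(\PrL_{st})\to\on{CAlg}(\PrL)$. Keep it explicit: $\PrL_{st}$ does not contain the unit $\on{An}$ of $\PrL$, so it is not literally a symmetric monoidal subcategory, and idempotence is what makes the inclusion of commutative algebras fully faithful.

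\textbf{Stability, paper's route.} The paper argues only through the explicit model. It uses that $\on{Sp}^{lax}_X(\mathcal{C})$ is stable, and that by \cite[Lemma 1.3.9]{AI} $\mathcal{C}[X^{-1}]$ is an accessible localization of it for which $s_+$ and $-\otimes-$ are exact. This is essentially the route you list as a check. You phrase closure of $\on{Sp}_X(\mathcal{C})$ via exactness of the right adjoints $s_-$ and $(-)^X$ rather than via $s_+$ and $\otimes$, which amounts to the same thing.

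\textbf{Stability, your main route.} You use only that $\mathcal{C}[X^{-1}]$ is a presentable $\mathcal{C}$-module and that presentable modules over a stable presentably monoidal category are stable. Equivalently, $\on{Sp}\otimes\mathcal{C}[X^{-1}]\simeq(\on{Sp}\otimes\mathcal{C})\otimes_{\mathcal{C}}\mathcal{C}[X^{-1}]\simeq\mathcal{C}[X^{-1}]$. This formulation also covers your hands-on pointedness step. That $0\otimes m$ is initial is immediate, but its terminality needs one more line, for instance the naturality argument or $\mathcal{M}\simeq\on{An}_\ast\otimes\mathcal{M}$.

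\textbf{Comparison.} Your argument is more formal and more general. It needs neither the identification $\mathcal{C}[X^{-1}]\simeq\on{Sp}_X(\mathcal{C})$ nor AI's localization lemma, and it applies verbatim to any pushout or localization in $\on{CAlg}(\PrL)_{\mathcal{C}/}$. The paper's route leans on c-spectrum machinery it needs elsewhere anyway. That includes preservation of compact objects via \cite[Lemma 1.5.2]{AI}, compact generation of the formal inversion, and the countability argument for $\mathcal{MS}$. So in the paper, stability comes essentially for free from the model.
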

\begin{proof}
Since the symmetric monoidal structure on $\PrL_{st}$ is compatible with that on $\PrL$, it suffices to show that $\mathcal{C}[X^{-1}]$ lands in $\PrL_{st}$. We consider the model of c-spectra. The category of lax c-spectra $\on{Sp}_c^{lax}(\mathcal{C})$ is stable when $\mathcal{C}$ is. The claim then follows from \cite[Lemma 1.3.9]{AI} that $\mathcal{C}[X^{-1}]$ is an accessible localization of $\on{Sp}_c^{lax}(\mathcal{C})$ and that the functors $s_+$ and $-\otimes-$ are exact.
\end{proof}

\subsubsection{Dualizable objects}
Recall that an object $X$ in a symmetric monoidal category $\mathcal{C}$ is dualizable if there exists some $X^\vee\in\mathcal{C}$ and (co)evaluation morphisms\[
ev: X^\vee\otimes X\rightarrow 1_\mathcal{C} \quad coev: 1_\mathcal{C}\rightarrow X^\vee\otimes X\]
satisfy triangle identities. More precisely, that says the following compositions
$$
\begin{gathered}
X \xrightarrow{\text { coev } \otimes i d} X \otimes X^\vee \otimes X \xrightarrow{\text { id } \otimes \mathrm{ev}} X \\
X^\vee \xrightarrow{\text { id } \otimes \text { coev }} X^\vee \otimes X \otimes X^\vee \xrightarrow{\text { ev } \otimes \text { id }} X^\vee
\end{gathered}
$$
are homotopic to the identity.

If the ambient symmetric monoidal category $\mathcal{C}$ is closed, and $X$ is a dualizable object in $\mathcal{C}$, then the evaluation map uniquely determines these structure maps, hence witnesses the inner hom $\underline{\on{Hom}}(X, 1)$ as a dual object of $X$. And when the dual object exists, it is unique. Throughout the thesis, we mostly handle the dualizable objects in the subcategory $\PrL_{st}$ of $\PrL$ spanned by stable categories and exact functors (and inheriting Lurie tensor product). In particular, if $\mathcal{C}\in\PrL_{st}$ is dualizable, then its dual is $\on{Fun}^L(\mathcal{C},\on{Sp})$.

A theorem of Lurie gives a full characterization of dualizable objects in $\PrL_{st}$. Before stating the result, we recall the definition of compactly assembled categories.
\begin{defn}
Let $\mathcal{C}$ be a category. A morphism $f: X\rightarrow Y$ is strongly compact if for every filtered colimit $Z=colim Z_i$, there is a lift rendering the following diagram commutative (up to homotopy):
\[\begin{tikzcd}
	{colim\on{Map}_{\mathcal{C}}(Y,Z_i)} && {colim\on{Map}_{\mathcal{C}}(X,Z_i)} \\
	\\
	{\on{Map}_{\mathcal{C}}(Y,Z)} && {\on{Map}_{\mathcal{C}}(X,Z)}
	\arrow["{f^\ast}", from=1-1, to=1-3]
	\arrow[from=1-1, to=3-1]
	\arrow[from=1-3, to=3-3]
	\arrow[dashed, from=3-1, to=1-3]
	\arrow["{f^\ast}", from=3-1, to=3-3].
\end{tikzcd}\]
\end{defn}
\begin{defn}
Let $\mathcal{C}$ be a category and $X\in\mathcal{C}$. We say that $X$ is strongly compactly exhausted if it is can be written as a sequential colimit\[
X=colim (X_0\rightarrow X_1\rightarrow X_2\rightarrow\cdots)\]
where all the transition maps $X_i\rightarrow X_{i+1}$ are strongly compact morphisms.
\end{defn}
\begin{defn}
A presentable category $\mathcal{C}$ is said to be compactly assembled if it is generated under colimits by strongly compactly exhausted objects.
\end{defn}
\begin{prop}(\cite[Proposition D.7.3.1]{SAG})
Let $\mathcal{C}$ be a stable and presentable category, the following are equivalent 
\begin{itemize}
\item $\mathcal{C}$ is dualizable in $\PrL_{st}$;
\item $\mathcal{C}$ is compactly assembled;
\item $\mathcal{C}$ is a retract in $\PrL_{st}$ of some compactly generated category.
\end{itemize}
\end{prop}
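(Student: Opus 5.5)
We prove the cycle of implications (3)$\Rightarrow$(1)$\Rightarrow$(2)$\Rightarrow$(3). The ingredients are two standard facts. First, compactly generated stable categories are dualizable. Second, a presentable stable category is compactly assembled exactly when the colimit functor $\on{Ind}(\mathcal{C})\to\mathcal{C}$ has a left adjoint $\hat{y}$. This is the stable analogue of Lurie's characterization in HTT 5.5.7 and SAG D.7.

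\textbf{(3)$\Rightarrow$(1).} Let $\mathcal{C}_0$ be a small stable category. Then $\on{Ind}(\mathcal{C}_0)$ is dualizable with dual $\on{Ind}(\mathcal{C}_0^{op})$. The evaluation is induced by $\on{Map}:\mathcal{C}_0^{op}\times\mathcal{C}_0\to\on{Sp}$, and the coevaluation $\on{Sp}\to\on{Ind}(\mathcal{C}_0^{op}\otimes\mathcal{C}_0)\simeq\on{Fun}^{ex}(\mathcal{C}_0^{op}\otimes\mathcal{C}_0^{\,op,op},\ldots)$ picks out the identity bimodule; the triangle identities reduce to the Yoneda lemma. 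Dualizable objects in any symmetric monoidal category are closed under retracts. Indeed, if $\mathcal{C}\xrightarrow{i}\mathcal{D}\xrightarrow{r}\mathcal{C}$ with $ri=\on{id}$ and $\mathcal{D}$ is dualizable, then the map $ir$ is an idempotent on $\mathcal{D}$. Its dual $(ir)^\vee$ is an idempotent on $\mathcal{D}^\vee$, which splits in $\PrL_{st}$ because that category is idempotent complete. The splitting gives a dual of $\mathcal{C}$, with evaluation and coevaluation obtained by composing those of $\mathcal{D}$ with $i$, $r$ and their duals.

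\textbf{(1)$\Rightarrow$(2).} Assume $\mathcal{C}$ is dualizable. We produce a left adjoint $\hat y$ to $\on{colim}:\on{Ind}(\mathcal{C}^\kappa)\to\mathcal{C}$, where $\kappa$ is chosen so that $\mathcal{C}$ is $\kappa$-compactly generated. It suffices to show that $\on{colim}$ preserves all colimits, since it then has a left adjoint by the adjoint functor theorem. To see this, note that $\mathcal{C}$ is a retract of $\on{Ind}(\mathcal{C}^\kappa)$ in $\PrL_{st}$. The colimit functor $\on{Ind}(\mathcal{C}^\kappa)\to\mathcal{C}$ is a colimit-preserving localization with fully faithful right adjoint $j$. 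Tensoring this localization with $\mathcal{C}^\vee$ and using the duality $\on{Fun}^L(\mathcal{C},-)\simeq\mathcal{C}^\vee\otimes-$, one obtains a colimit-preserving section $\mathcal{C}\to\on{Ind}(\mathcal{C}^\kappa)$ whose composite with $\on{colim}$ is the identity. This is Lurie's argument: $\on{id}_\mathcal{C}$ lies in $\mathcal{C}^\vee\otimes\mathcal{C}$, and we lift it through $\mathcal{C}^\vee\otimes\on{Ind}(\mathcal{C}^\kappa)\to\mathcal{C}^\vee\otimes\mathcal{C}$, which is a localization with a right adjoint given by tensoring. The section is left adjoint to $\on{colim}$.

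Given $\hat y$, we extract strongly compact morphisms. For each $X$ we write $\hat y(X)=\on{colim}_i y(X_i)$. Each structure map $X_i\to X$ with a factorization $X_i\to X_j\to X$ gives a strongly compact morphism, because mapping out of $\hat y(X)$ commutes with filtered colimits in the Ind-variable. Using that $\hat y(X)$ is "$\omega_1$-presentable" and the stability of $\mathcal{C}$, we choose a cofinal sequence and write $X$ as a sequential colimit along strongly compact maps. So every object is strongly compactly exhausted, and $\mathcal{C}$ is compactly assembled.

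\textbf{(2)$\Rightarrow$(3).} If $\mathcal{C}$ is compactly assembled, the generation by strongly compactly exhausted objects yields the left adjoint $\hat y$ to $\on{colim}:\on{Ind}(\mathcal{C}^{\omega_1})\to\mathcal{C}$. One checks on generators that $\hat y$ is fully faithful, since $\on{colim}\circ\hat y\simeq\on{id}$ for a strongly compactly exhausted $X$. Because $\hat y$ is a left adjoint, it preserves colimits. Thus $\mathcal{C}$ is a retract of the compactly generated category $\on{Ind}(\mathcal{C}^{\omega_1})$ via $\hat y$ and $\on{colim}$.

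The main obstacle is the passage between the existence of $\hat y$ and the strongly-compact-morphism language, in both directions, in particular arranging the exhaustion to be sequential. For this I would follow SAG D.7 and Efimov's treatment, using $\omega_1$-compact objects to reduce arbitrary filtered diagrams to countable ones.
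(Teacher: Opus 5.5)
\textbf{The gap is in your step (1)$\Rightarrow$(2).}

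The tensoring argument itself is sound. $\mathcal{C}^\vee\otimes-$ sends the localization $\on{colim}\colon\on{Ind}(\mathcal{C}^\kappa)\to\mathcal{C}$ to a localization. Its right adjoint is postcomposition with $j$ in the $\on{Fun}^R$-model, not ``tensoring'', since $j$ does not preserve colimits. So $\on{id}_{\mathcal{C}}$ lifts to a colimit-preserving $F$ with $\on{colim}\circ F\simeq\on{id}_{\mathcal{C}}$.

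But this only exhibits $\mathcal{C}$ as a retract of $\on{Ind}(\mathcal{C}^\kappa)$ in $\PrL_{st}$. That is (3), not (2). The sentence ``the section is left adjoint to $\on{colim}$'' carries the whole implication and is given no justification.

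It is false for a general section. Take $\mathcal{C}=\on{Sp}$ and $\kappa=\omega_1$, and pick $0\neq k\in\ker(\on{colim})$. Such $k$ exists because $\on{colim}\colon\on{Ind}(\on{Sp}^{\omega_1})\to\on{Sp}$ is a localization that is not an equivalence. Let $F$ be the colimit-preserving functor with $F(\mathbb{S})=y(\mathbb{S})\oplus k$. Then $\on{colim}\circ F\simeq\on{id}$, but $\on{Map}(F(\mathbb{S}),k)\neq 0=\on{Map}(\mathbb{S},\on{colim}\,k)$.

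Even for the specific lift obtained from the right adjoint, $F\dashv\on{colim}$ would give (2) at once. So this cannot simply be asserted; it is the theorem.

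Your preliminary reduction is also backwards. $\on{colim}$ always preserves colimits, since it is left adjoint to $j$. To have a \emph{left} adjoint it must preserve limits. As it is exact and accessible, this means preserving products, and that is exactly the nontrivial point.

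\textbf{Consequences and what is missing.} The cycle does not close. You have (2)$\Rightarrow$(3)$\Rightarrow$(1) and, in substance, (1)$\Rightarrow$(3), but nothing yields (2).

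The paper only cites SAG D.7.3.1. The standard proof behind that citation goes: dualizable $\Rightarrow$ retract of $\on{Ind}(\mathcal{C}^\kappa)$, by essentially your tensoring trick. It then goes retract $\Rightarrow$ compactly assembled, by Lurie's separate theorem that a retract, along filtered-colimit-preserving functors, of a compactly generated category is compactly assembled. That last step is what you must supply. It is not formal, because the retraction need not be strongly continuous, so it does not simply transport compact objects or compact morphisms.

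A workable start with the paper's definition is as follows. Suppose $\mathcal{C}\xrightarrow{i}\mathcal{D}\xrightarrow{r}\mathcal{C}$ with $ri\simeq\on{id}$ and $\mathcal{D}$ compactly generated. For $K\in\mathcal{D}^\omega$ and any $g\colon K\to i(Y)$, the map $r(g)\colon r(K)\to Y$ is a compact morphism. Indeed, given $\varphi\colon Y\to\on{colim}Z_\alpha$, factor $i(\varphi)\circ g$ through some $i(Z_\alpha)$ and apply $r$. The remaining work is to assemble such maps into sequential exhaustions of a generating family.

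\textbf{Two smaller errors in your extraction step.} First, not every object is strongly compactly exhausted. Such objects are $\omega_1$-compact, so in $\on{Sp}$ an uncountable sum of copies of $\mathbb{S}$ is not one. Only a generating family is needed, and that is all the definition asks for. Second, the compactness of $X_i\to X$ does not come from mapping out of $\hat y(X)$ commuting with filtered colimits, which it does not. It comes from $\on{Map}(X,\on{colim}Z_\alpha)\simeq\lim_i\on{colim}_\alpha\on{Map}(X_i,Z_\alpha)$.
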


The right notion of functors between compactly assembled categories is compactly assembled functors. In the stable setting, they could be characterized as 
\begin{itemize}
\item strongly left adjoint functors, that is, left adjoint functors such that the right adjoint admits further a right adjoint, or,
\item functors that preserve strongly compact morphisms.
\end{itemize}
In the stable compactly generated setting, the equivalent conditions above are further equivalent to
\begin{itemize}
\item compactness-preserving functors.
\end{itemize}
We denote by $\PrL_{ca}$ the (non-full) subcategory of $\PrL_{st}$ spanned by stable, compactly assembled categories and compactly assembled functors between them. This category in fact inherits a symmetric monoidal structure from $\PrL_{st}$, which is a little subtle since it is not a full subcategory. We refer to \cite[Section 2.9]{KNP} for a detailed treatment.

Denote by $\PrL_{cg}$ the full subcategory of $\PrL_{ca}$ spanned by compactly generated categories, and denote by $\mathsf{Cat}^{\on{perf}}$ the category spanned by stable, idempotent complete categories and exact functors. We have the following well-known result:
\begin{lm}
The Ind-completion functor\[
\on{Ind}:\mathsf{Cat}^{\on{perf}}\rightarrow\PrL_{st}\]
induces an equivalence of $\mathsf{Cat}^{\on{perf}}$ and $\PrL_{cg}$, with the inverse given by\[
\begin{aligned}
(-)^\omega:\PrL_{cg}&\rightarrow\mathsf{Cat}^{\on{perf}}\\
\mathcal{C}&\mapsto\mathcal{C}^\omega.
\end{aligned}\]
\end{lm}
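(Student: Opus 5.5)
We need to show that $\on{Ind}$ and $(-)^\omega$ are mutually inverse equivalences between $\mathsf{Cat}^{\on{perf}}$ and $\PrL_{cg}$. The plan is to check three things: that both functors are well defined, that the two composites are naturally equivalent to the identities, and that $\on{Ind}$ is fully faithful onto the correct morphisms.

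\emph{Well-definedness.} For $\mathcal{A}\in\mathsf{Cat}^{\on{perf}}$, the category $\on{Ind}(\mathcal{A})$ is presentable, stable (since $\mathcal{A}$ is stable) and compactly generated by the image of the Yoneda embedding. Because $\mathcal{A}$ is idempotent complete, the compact objects of $\on{Ind}(\mathcal{A})$ are exactly the retracts of representables, i.e. $\on{Ind}(\mathcal{A})^\omega\simeq\mathcal{A}$ (\cite[Lemma 5.4.2.4]{HTT}). An exact functor $f\colon\mathcal{A}\to\mathcal{B}$ extends uniquely to the filtered-colimit-preserving functor $\on{Ind}(f)$. This extension preserves all colimits, since $f$ preserves finite colimits, and it sends representables to representables. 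Hence it preserves compact objects and is a morphism of $\PrL_{cg}$. Conversely, for $\mathcal{C}\in\PrL_{cg}$, the subcategory $\mathcal{C}^\omega$ is stable and idempotent complete, being closed under finite colimits, shifts and retracts. A compact-preserving exact left adjoint restricts to an exact functor $\mathcal{C}^\omega\to\mathcal{D}^\omega$.

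\emph{The composites.} We have just seen that $(-)^\omega\circ\on{Ind}\simeq\on{id}$. For the other composite, \cite[Proposition 5.3.5.11]{HTT} shows that for compactly generated $\mathcal{C}$ the canonical functor $\on{Ind}(\mathcal{C}^\omega)\to\mathcal{C}$ is an equivalence. That is, it is fully faithful because compact objects corepresent functors commuting with filtered colimits, and it is essentially surjective because every object is a filtered colimit of compacts. Both identifications are natural, since they come from universal properties.

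\emph{Fully faithfulness on mapping spaces.} By \cite[Proposition 5.5.1.9]{HTT}, restriction along Yoneda gives an equivalence $\on{Fun}^L(\on{Ind}(\mathcal{A}),\mathcal{D})\simeq\on{Fun}^{\on{rex}}(\mathcal{A},\mathcal{D})$, and it is compatible with stability. Restricting both sides to compact-preserving functors and using $\mathcal{D}\simeq\on{Ind}(\mathcal{D}^\omega)$ identifies the morphisms in $\PrL_{cg}$ with $\on{Fun}^{ex}(\mathcal{A},\mathcal{B})$. The step needing the most care is this one. One must check that a left adjoint between compactly generated stable categories is compactly assembled, i.e. lies in the non-full subcategory $\PrL_{ca}$, exactly when it preserves compact objects. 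This is the characterization recalled just above. The other point to verify is that the idempotent-completeness hypothesis is precisely what makes $(-)^\omega\circ\on{Ind}$ equal to the identity rather than to idempotent completion.
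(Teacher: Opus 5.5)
Your proposal is correct and takes essentially the same route as the paper. The paper's proof is a one-line appeal to the general (unstable) statement in \cite{HTT} at $\kappa=\omega$, restricted to stable categories; you unpack that citation in the stable setting by checking that $\on{Ind}(\mathcal{A})^\omega\simeq\mathcal{A}$ (using idempotent completeness), that $\on{Ind}(\mathcal{C}^\omega)\simeq\mathcal{C}$, and that compact-preserving colimit-preserving functors $\on{Ind}(\mathcal{A})\to\on{Ind}(\mathcal{B})$ correspond to exact functors $\mathcal{A}\to\mathcal{B}$.
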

\begin{proof}
It essentially follows from \cite[Lemma 5.3.2.9]{HTT}. We take $\kappa=\omega$ in the lemma and restrict both sides to stable categories.
\end{proof}
Note that a dualizable object $\mathcal{C}$ in $\PrL_{st}$ (thus an object in $\PrL_{ca}$) may not be dualizable as an object in $\PrL_{ca}$ since the structure maps $ev$ and $coev$ may not be compactly assembled maps. In the following of the thesis, we call a category $\mathcal{C}$ dualizable if it is a dualizable object in $\PrL_{st}$, and we call such a category $\mathcal{C}$ smooth (resp. proper) when $coev$ (resp. $ev$) happens to be compactly assembled. We call a category saturated when it is both smooth and proper.\\

\subsubsection{A compatibility result of dual operation and formal inversion}\label{compactibility}
Let $\mathcal{C}\in\on{CAlg}(\PrL_{ca})$ and $X\in\mathcal{C}^{\omega}$. The main goal of this section is to show that taking dual commutes with formal inversion. Note first that we have the following commutative diagram by the combination of the last lemma in the last section and \cite[Construction 2.9.21]{KNP}
\[\begin{tikzcd}
	{\mathsf{Cat}^{\on{perf}}} && {\PrL_{cg}} && {\PrL_{ca}} \\
	\\
	{\mathsf{Cat}^{\on{perf}}} && {\PrL_{cg}} && {\PrL_{ca}}
	\arrow["{\on{Ind}(-)}", from=1-1, to=1-3]
	\arrow["\sim"', from=1-1, to=1-3]
	\arrow["{(-)^{op}}", from=1-1, to=3-1]
	\arrow[hook, from=1-3, to=1-5]
	\arrow["{(-)^\vee}", from=1-3, to=3-3]
	\arrow["\sim"', from=1-3, to=3-3]
	\arrow["{(-)^\vee}", from=1-5, to=3-5]
	\arrow["\sim"', from=1-5, to=3-5]
	\arrow["{\on{Ind}(-)}", from=3-1, to=3-3]
	\arrow["\sim"', from=3-1, to=3-3]
	\arrow[hook, from=3-3, to=3-5],
\end{tikzcd}\]
where all functors are symmetric monoidal. For a compactly assembled category $\mathcal{C}$, the vertical functor $(-)^\vee$ sends it to $\mathcal{C}^\vee\simeq\on{Fun}^L(\mathcal{C},\on{Sp})$, while for a strongly left adjoint functor $L:\mathcal{C}\rightarrow\mathcal{D}$, \[L^\vee:\on{Fun}^L(\mathcal{C},\on{Sp})\rightarrow\on{Fun}^L(\mathcal{D},\on{Sp})\] is given by the precomposition with the right adjoint $R$ of $L$. Equivalently, it is the left adjoint of the functor given by precomposite with $L$. In particular, for $\mathcal{C}\in\PrL_{ca}$ and $X\in\mathcal{C}$ compact, the colimits-preserving functor\[
\on{Sp}\xrightarrow{X}\mathcal{C}\]
is strongly left adjoint, hence dualizes to a functor
\[\on{Sp}^\vee\rightarrow\mathcal{C}^\vee.\]
Since $\on{Sp}\simeq\on{Sp}^\vee$, we precomposite it with the functor above, and the composition\[
\on{Sp}\rightarrow\on{Sp}^\vee\rightarrow\mathcal{C}^\vee\]
is determined by a single element in $\mathcal{C}^\vee$, which we denote by $X^{dual}$.

The first observation is that the formal inversion of a compactly assembled (resp. compactly generated) category with respect to a compact object is still compactly assembled (resp. compactly generated). We can see this from the c-spectra model of formal inversion. In fact, it suffices to show that the localization functor $\on{L}$ preserves weakly compactly exhaustible objects (resp. compact objects). This is true since the right adjoint preserves filtered colimits, as proved in \cite[Lemma 1.5.2]{AI}.

Combining all the discussions so far, the question whether taking dual commutes with formal inversion makes sense. We give a positive answer:
\begin{prop}\label{formal.inversion.prop}
Let $\mathcal{C}$ be a stable, compactly assembled, presentably symmetric monoidal category. Let $X\in\mathcal{C}$ be a compact object. Then \[\mathcal{C}^\vee[(X^{dual})^{-1}]\simeq \mathcal{C}[X^{-1}]^\vee\]
holds in $\on{CAlg}(\mathcal{P}r^L_{ca})_{\mathcal{C}^\vee[(X^{dual})^{-1}]/}$. A fortiori, they are the same as presentably symmetric monoidal category.
\end{prop}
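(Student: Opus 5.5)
We plan to prove the equivalence by comparing universal properties, working inside $\on{CAlg}(\PrL_{ca})$, where the duality functor $(-)^\vee$ is a symmetric monoidal self-equivalence (with inverse itself) by the diagram in Section \ref{compactibility} and \cite[Construction 2.9.21]{KNP}. Since $(-)^\vee$ is symmetric monoidal, it induces an equivalence $\on{CAlg}(\PrL_{ca})\simeq\on{CAlg}(\PrL_{ca})$, hence equivalences of undercategories $\on{CAlg}(\PrL_{ca})_{\mathcal{C}/}\simeq \on{CAlg}(\PrL_{ca})_{\mathcal{C}^\vee/}$. The strategy is to show that this equivalence carries the full subcategory of objects under $\mathcal{C}$ in which $X$ becomes invertible onto the full subcategory of objects under $\mathcal{C}^\vee$ in which $X^{dual}$ becomes invertible. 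Then the initial objects match, giving $\mathcal{C}[X^{-1}]^\vee\simeq\mathcal{C}^\vee[(X^{dual})^{-1}]$ under $\mathcal{C}^\vee$.

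First, we need that $\mathcal{C}[X^{-1}]$ and $L_X$ actually live in $\on{CAlg}(\PrL_{ca})$. The paper already observed that $\mathcal{C}[X^{-1}]$ is compactly assembled, since the right adjoint of $L_X$ preserves filtered colimits \cite[Lemma 1.5.2]{AI}. Strongly compact maps are therefore preserved by $L_X$, so $L_X$ is compactly assembled. The universal property of Lemma \ref{lm.stable.universal.property} must then be restricted to $\PrL_{ca}$: given $F:\mathcal{C}\to\mathcal{D}$ in $\on{CAlg}(\PrL_{ca})$ inverting $X$, the induced $\bar F:\mathcal{C}[X^{-1}]\to\mathcal{D}$ should be compactly assembled. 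For this we would use the telescope or c-spectra description. $\mathcal{C}[X^{-1}]$ is generated under colimits by objects $X^{-n}\otimes L_X(c)$, and $\bar F(X^{-n}\otimes L_X c)=F(X)^{-n}\otimes F(c)$. Tensoring with an invertible object is an equivalence, so it preserves strongly compact maps. From this, together with the fact that the right adjoint of $\bar F$ preserves filtered colimits (checked levelwise on spectra), we get the restricted universal property in $\on{CAlg}(\PrL_{ca})$.

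Second, and this is the key step, we show that for $F:\mathcal{C}\to\mathcal{D}$ in $\on{CAlg}(\PrL_{ca})$, $F(X)$ is invertible if and only if $F^\vee(X^{dual})$ is invertible in $\mathcal{D}^\vee$. We would identify $F^\vee(X^{dual})=F(X)^{dual}$, using functoriality of dualization on the composite $\on{Sp}\xrightarrow{X}\mathcal{C}\xrightarrow{F}\mathcal{D}$. We then show that $(-)^{dual}:\mathcal{D}^\omega\to(\mathcal{D}^\vee)$ is symmetric monoidal on compact objects. Concretely, $Y^{dual}$ is the functor $\on{Map}(Y,-)$ viewed in $\on{Fun}^L(\mathcal{D},\on{Sp})$, and it is the image of $(-)^\vee$ applied to the symmetric monoidal functor $\on{Ind}(\on{Sp}^\omega)\to\mathcal{D}$ picking $Y$. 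Monoidality then follows from the monoidality of $(-)^\vee$ and the equivalence $\on{Sp}\otimes\on{Sp}\simeq\on{Sp}$. Since a symmetric monoidal functor sends invertible objects to invertible objects, the forward implication follows. The converse follows by applying $(-)^\vee$ again, using $(\mathcal{C}^\vee)^\vee\simeq\mathcal{C}$ and $(X^{dual})^{dual}\simeq X$. One concern is that $X^{dual}$ must be compact in $\mathcal{C}^\vee$ for the double dual to make sense, which should hold because $\on{Sp}\to\mathcal{C}^\vee$ is itself the dual of a strongly left adjoint.

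Finally, combining the two steps, both $\mathcal{C}[X^{-1}]^\vee$ and $\mathcal{C}^\vee[(X^{dual})^{-1}]$ are initial in the same full subcategory of $\on{CAlg}(\PrL_{ca})_{\mathcal{C}^\vee/}$, so they are equivalent there. Forgetting to $\on{CAlg}(\PrL)$ gives the last assertion of the statement. We expect the main obstacle to be making the monoidality of $(-)^{dual}$ rigorous, in particular the identification $(X\otimes Y)^{dual}\simeq X^{dual}\otimes Y^{dual}$ as an equivalence compatible with the coherence data, in the non-full symmetric monoidal subcategory $\PrL_{ca}$ of \cite{KNP}. We would first try to reduce this to the compactly generated case via $\mathsf{Cat}^{\on{perf}}$, where $(-)^{dual}$ is just the Yoneda embedding $\mathcal{C}^{\omega,op}\to\on{Ind}(\mathcal{C}^{\omega,op})$, which is evidently monoidal.
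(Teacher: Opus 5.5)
Your proposal is correct and follows essentially the paper's route: transport the universal property of formal inversion in $\on{CAlg}(\PrL_{ca})$ across the symmetric monoidal duality equivalence, with the key input that $X$ is inverted in $\mathcal{E}$ iff $X^{dual}$ is inverted in $\mathcal{E}^\vee$. The paper gets this by showing that the dual of $-\otimes X$ is $-\otimes X^{dual}$, so both directions follow at once because duality reflects equivalences; your identifications $F^\vee(X^{dual})\simeq F(X)^{dual}$ and $(Y\otimes Z)^{dual}\simeq Y^{dual}\otimes Z^{dual}$ rest on the same $mult^\vee$-adjunction computation and need no coherence data, since invertibility is a property, only that $F(X)^{-1}$ is compact, which holds since the unit is. Your Step 1 usefully spells out the strong continuity of the induced functor out of $\mathcal{C}[X^{-1}]$, which the paper only asserts; two small slips are that $\on{Sp}\to\mathcal{D}$ picking $Y$ is not symmetric monoidal (what you actually use is that $(-)^\vee$ is symmetric monoidal, applied to $mult\circ(Y\otimes Z)$), and that your fallback reduction to the compactly generated case would not cover general compactly assembled test objects, though it is not needed.
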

\begin{proof}
Note first that $\mathcal{C}[X^{-1}]^\vee$ is a $\mathcal{C}^\vee$ commutative algebra via the dual of the canonical monoidal functor $\on{L}_X^\mathcal{C}:\mathcal{C}\rightarrow\mathcal{C}[X^{-1}]$. Again, $\on{L}_X^\mathcal{C}$ is strongly left adjoint by \cite[Lemma 1.5.2]{AI}. This ring map factors through $\mathcal{C}^\vee[(X^{dual})^{-1}]$ by the next lemma and the universal property of formal inversion. On the other hand, since symmetric monoidal structure on $\PrL_{ca}$ is restricted from $\PrL_{st}$, Lemma \ref{lm.stable.universal.property} remains valid in compactly assembled world. The Yoneda lemma then applies:\[
\begin{aligned}
\on{CAlg}(\PrL_{ca})_{\mathcal{C}^\vee[(X^{dual})^{-1}]/}(\mathcal{C}^\vee[(X^{dual})^{-1}],\mathcal{D})&\simeq \on{CAlg}(\PrL_{ca})_{\mathcal{C}^\vee/}(\mathcal{C}^\vee,\mathcal{D})\\
&\simeq \on{CAlg}(\PrL_{ca})_{\mathcal{C}/}(\mathcal{C},\mathcal{D}^\vee)\\
&\simeq \on{CAlg}(\PrL_{ca})_{\mathcal{C}[X^{-1}]/}(\mathcal{C}[X^{-1}],\mathcal{D}^\vee)\\
&\simeq \on{CAlg}(\PrL_{ca})^X_{\mathcal{C}/}(\mathcal{C}[X^{-1}],\mathcal{D}^\vee)\\
&\simeq \on{CAlg}(\PrL_{ca})^{X^{dual}}_{\mathcal{C}^\vee/}(\mathcal{C}[X^{-1}]^\vee,\mathcal{D})\\
&\simeq \on{CAlg}(\PrL_{ca})_{\mathcal{C}^\vee[(X^{dual})^{-1}]/}(\mathcal{C}[X^{-1}]^\vee,\mathcal{D}),
\end{aligned}\]
where the third and the fifth isomorphisms follow again from the next lemma.
\end{proof}
\begin{lm}
Let  $\mathcal{C}\in\on{CAlg}(\PrL_{ca})$ with $X$ a compact object in $\mathcal{C}$. Let $\mathcal{E}\in\on{CAlg}(\PrL_{ca})_{\mathcal{C}/}$. Then $-\otimes X:\mathcal{E}\rightarrow\mathcal{E}$ dualizes to $-\otimes X^{dual}:\mathcal{E}^\vee\rightarrow\mathcal{E}^\vee.$
\end{lm}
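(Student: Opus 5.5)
Since $X$ is compact, $-\otimes X:\mathcal{E}\rightarrow\mathcal{E}$ is a strongly left adjoint, so it has a dual $(-\otimes X)^\vee$, given as precomposition with its right adjoint $\underline{\on{Hom}}(X_{\mathcal{E}},-)$ on $\on{Fun}^L(\mathcal{E},\on{Sp})$. Here $X_{\mathcal{E}}$ is the image of $X$ under the structure map $\mathcal{C}\rightarrow\mathcal{E}$, which is compactly assembled and hence preserves compacts. My plan is to factor $-\otimes X$ as a composite of structure maps in $\PrL_{ca}$ and then use that $(-)^\vee$ is symmetric monoidal on $\PrL_{ca}$ (the commutative diagram of \S\ref{compactibility}, from \cite[Construction 2.9.21]{KNP}). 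The factorization is
\[
\mathcal{E}\simeq\mathcal{E}\otimes\on{Sp}\xrightarrow{\on{id}\otimes X_{\mathcal{E}}}\mathcal{E}\otimes\mathcal{E}\xrightarrow{\mu}\mathcal{E},
\]
where $\mu$ is the multiplication and $X_{\mathcal{E}}:\on{Sp}\rightarrow\mathcal{E}$ is the strongly left adjoint functor classifying $X_{\mathcal{E}}$.

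The first key step is to check that every piece of this factorization lies in $\PrL_{ca}$. Since $\mathcal{E}\in\on{CAlg}(\PrL_{ca})$, the multiplication $\mu$ is compactly assembled. The map $\on{id}\otimes X_{\mathcal{E}}$ is a tensor product of compactly assembled functors, so it is also compactly assembled.

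The second step is to apply $(-)^\vee$ and use monoidality. This gives
\[
(-\otimes X)^\vee\simeq \mu^\vee\circ(\on{id}\otimes X_{\mathcal{E}}^\vee),
\]
where $\mu^\vee$ is the multiplication of $\mathcal{E}^\vee$ in its induced algebra structure. The same reasoning shows that the unit $\on{Sp}\rightarrow\mathcal{E}^\vee$ is the dual of the unit of $\mathcal{E}$.

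The third step is to identify $X_{\mathcal{E}}^\vee:\on{Sp}\simeq\on{Sp}^\vee\rightarrow\mathcal{E}^\vee$ with the object $(X^{dual})_{\mathcal{E}^\vee}$. By definition $X^{dual}$ is classified by $X^\vee:\on{Sp}\rightarrow\mathcal{C}^\vee$. The map $X_{\mathcal{E}}$ factors as $\on{Sp}\xrightarrow{X}\mathcal{C}\rightarrow\mathcal{E}$, and dualizing this factorization gives that $X_{\mathcal{E}}^\vee$ is the image of $X^{dual}$ under the dual structure map $\mathcal{C}^\vee\rightarrow\mathcal{E}^\vee$. This image is what $X^{dual}$ means as an object of $\mathcal{E}^\vee$, so the composite becomes $-\otimes X^{dual}$.

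The main obstacle is making the second step rigorous rather than a slogan. The symmetric monoidal structure on $\PrL_{ca}$ is not inherited from a full subcategory, so I need $(-)^\vee$ to be a symmetric monoidal functor $\PrL_{ca}\rightarrow\PrL_{ca}$ that sends the algebra $\mathcal{E}$ to an algebra whose multiplication is $\mu^\vee$ up to the canonical identification $(\mathcal{E}\otimes\mathcal{E})^\vee\simeq\mathcal{E}^\vee\otimes\mathcal{E}^\vee$. I would quote this from \cite[Section 2.9]{KNP}.

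As a sanity check, I would also verify the claim concretely on objects. For $\phi\in\on{Fun}^L(\mathcal{E},\on{Sp})$, precomposition with $\underline{\on{Hom}}(X_{\mathcal{E}},-)$ should agree with $\phi\otimes X^{dual}$ under Day-type convolution. In the compactly generated case $\mathcal{E}^\vee=\on{Ind}((\mathcal{E}^\omega)^{op})$, this reduces to the formula $\underline{\on{Hom}}(X,-)\simeq X^\vee$-twisting on compact objects.
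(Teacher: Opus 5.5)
Your proposal is correct and takes essentially the same route as the paper. Both arguments factor $-\otimes X$ as $\mathcal{E}\xrightarrow{\on{id}\boxtimes X}\mathcal{E}\otimes\mathcal{E}\xrightarrow{mult}\mathcal{E}$ and dualize each piece; the paper phrases each dual as the left adjoint of the corresponding precomposition functor, while you invoke the symmetric monoidality of $(-)^\vee$ on $\PrL_{ca}$ from \cite{KNP}, which the paper also uses tacitly when it identifies the dual of $\on{id}\boxtimes X$ with $\on{id}\boxtimes X^{dual}$. Your explicit check that both factors are compactly assembled, and your Step 3 transporting $X^{dual}$ from $\mathcal{C}^\vee$ to $\mathcal{E}^\vee$, make precise two points the paper leaves implicit.
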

\begin{proof}
Since $X$ is compact, $-\otimes X$ preserves compact morphisms, hence is strongly left adjoint.

For a presentable category $\mathcal{D}$, we denote by $-\boxtimes-$ the natural map from $\mathcal{D}\times\mathcal{D}$ to $\mathcal{D}\otimes\mathcal{D}$. We consider the commutative diagram
\[\begin{tikzcd}
	{\mathcal{E}^\vee} && {\mathcal{E}^\vee\otimes\mathcal{E}^\vee} && {\mathcal{E}^\vee} \\
	\\
	{\operatorname{Fun}^L(\mathcal{E},\operatorname{Sp})} && {\operatorname{Fun}^L(\mathcal{E}\otimes\mathcal{E},\operatorname{Sp})} && {\operatorname{Fun}^L(\mathcal{E},\operatorname{Sp})}
	\arrow["{\operatorname{id}\boxtimes X^{dual}}", from=1-1, to=1-3]
	\arrow["\simeq"', from=1-1, to=3-1]
	\arrow["{mult^\vee}", from=1-3, to=1-5]
	\arrow["\simeq"', from=1-3, to=3-3]
	\arrow["\simeq"', from=1-5, to=3-5]
	\arrow[shift left=2, from=3-1, to=3-3]
	\arrow["{(\operatorname{id}\boxtimes X)^\ast}", shift left=2, from=3-3, to=3-1]
	\arrow[shift left=2, from=3-3, to=3-5]
	\arrow["{mult^\ast}", shift left=2, from=3-5, to=3-3],
\end{tikzcd}\]
where $mult^\vee$ is the monoidal structure map on $\mathcal{E}^\vee$ induced by that on $\mathcal{E}$. Alternatively, that is the left adjoint of the functor\[
-\circ mult: \mathcal{E}^\vee\simeq\on{Fun}^L(\mathcal{E},\on{Sp})\rightarrow\on{Fun}^L(\mathcal{E}\otimes\mathcal{E},\on{Sp})\simeq(\mathcal{E}\otimes\mathcal{E})^\vee\simeq\mathcal{E}^\vee\otimes\mathcal{E}^\vee.\] It is then immediate that the upper functors composite to $-\otimes X^{dual}$.

On the other hand, we claim that $-\otimes X^{dual}$ is the dual of $-\otimes X$. In fact, it follows from the definition of $X^{dual}$ and $mult^\vee$ that they are precisely the left adjoints of the precompositions with $\on{id}\boxtimes X$ and with the monoidal structure map $mult$ respectively. But\[
(\on{id}\boxtimes X)^\ast\circ mult^\ast\simeq (-\otimes X)^\ast\]
The left adjoint of that is the dual of $-\otimes X$ by definition of the functor $(-)^\vee$.
\end{proof}
\begin{rmk}\label{dual.comp}
We also mention that for a compactly generated, presentable, stable category $\mathcal{C}$ and $X$ compact in $\mathcal{C}$, the induced $X^{dual}\in\mathcal{C}^\vee$ identifies with $j(X)$ via the equivalence\[
\begin{aligned}
\mathcal{C}^\vee&\simeq\on{Ind}((\mathcal{C}^\omega)^{op})\\
X^{dual}&\leftrightsquigarrow j(X)
\end{aligned}\]
This is straightforward, as $X^{dual}$ is the same data as a left adjoint functor\[\begin{aligned}
\mathcal{C}&\rightarrow\on{Sp}\\
Y&\mapsto\on{Map}(X,Y).
\end{aligned}\]
But this is the spectral Yoneda embedding on $(\mathcal{C}^\omega)^{op}$.
\end{rmk}

\subsubsection{Factorization of adjunction pair through category of modules}\label{fac}
This section follows from \cite{EK}. Let $\gamma^\ast: \mathcal{C} \rightleftarrows \mathcal{D}: \gamma_\ast$ be an adjunction between symmetric monoidal categories and we assume the left adjoint functor $\gamma^\ast$ to be strongly symmetric monoidal, then the adjunction pair factors through the following commutative diagram,
\[\begin{tikzcd}
	{\mathcal{C}} && {\mathcal{D}} \\
	& {\on{LMod}_{\gamma_\ast\gamma^\ast}(\mathcal{C})} \\
	& {\on{LMod}_{\gamma_\ast\gamma^\ast(1)\otimes-}(\mathcal{C})}
	\arrow["{\gamma^\ast}", from=1-1, to=1-3]
	\arrow[from=1-1, to=2-2]
	\arrow[from=1-1, to=3-2]
	\arrow["{{(\gamma^\ast)^{enh}}}", from=2-2, to=1-3]
	\arrow[from=3-2, to=1-3]
	\arrow["\alpha"', from=3-2, to=2-2],
\end{tikzcd}\]
in which the notation $\on{LMod}$ means the category of modules over a monad. Each of the arrows above admits a right adjoint, and if we replace all arrows by their right adjoints, the diagram remains commutative. In the following part of this section, we explain the diagram and give conditions for $(\gamma^\ast)^{enh}$, $\alpha$ to be equivalences.

The factorization through $\on{LMod}_{\gamma_\ast\gamma^\ast}(\mathcal{C})$ is true even without any symmetric monoidal structure. The idea is that the universal property of the monad $\gamma_\ast\gamma^\ast$ provides a factorization\[
\mathcal{D}\xrightarrow{(\gamma_\ast)^{enh}}\on{LMod}_{\gamma_\ast\gamma^\ast}(\mathcal{C})\xrightarrow{\on{U}}\mathcal{C}.\]
Informally speaking, $(\gamma_\ast)^{enh}$ sends $d\in\mathcal{D}$ to $\gamma_\ast(d)$, and the monad $\gamma_\ast\gamma^\ast$ acts on $\gamma_\ast(d)$ via the counit of the adjunction. We refer to \cite[proposition 4.7.3.3]{HA} for a detailed discussion. Moreover, the two maps involved above are both right adjoints. And we denote by $(\gamma^\ast)^{enh}$ the left adjoint of the first functor. The Lurie-Barr-Beck theorem gives an necessary and sufficient condition for $(\gamma^\ast)^{enh}$ to be an equivalence:
\begin{lm}(\cite[Theorem 4.7.3.5]{HA})
Let $\gamma^\ast: \mathcal{C} \rightleftarrows \mathcal{D}: \gamma_\ast$ be an adjunction. The following are equivalent:
\begin{enumerate}
\item The functors $(\gamma_\ast)^{enh}$ and $(\gamma^\ast)^{enh}$ are mutually inverse.
\item The functor $\gamma_\ast$ satisfies the following conditions:
\begin{itemize}
\item $(\gamma_\ast)^{enh}$ is conservative;
\item For all $\gamma_\ast$-split simplicial object $X$ in $\mathcal{D}$, $X$ admits a colimit in $\mathcal{D}$. Furthermore, $\gamma_\ast$ preserves and reflects $X$-indexed colimits.
\end{itemize}
\end{enumerate}
\end{lm}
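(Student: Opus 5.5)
I would run the standard monadicity argument in the form of \cite[Section 4.7.3]{HA}. Write $T=\gamma_\ast\gamma^\ast$, $G=(\gamma_\ast)^{enh}:\mathcal{D}\to\on{LMod}_T(\mathcal{C})$ and $\on{U}:\on{LMod}_T(\mathcal{C})\to\mathcal{C}$, so that $\gamma_\ast\simeq\on{U}\circ G$.

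I read the conservativity hypothesis as conservativity of $\gamma_\ast=\on{U}\circ G$. This is equivalent to conservativity of $G$ as stated, because $\on{U}$ is conservative. That fact is the first input I would establish: a map of $T$-modules whose underlying map is an equivalence has an inverse that is automatically $T$-linear.

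\textbf{(1)$\Rightarrow$(2).} It suffices to check both conditions for $\on{U}:\on{LMod}_T(\mathcal{C})\to\mathcal{C}$ and then transport them along the equivalence.

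Conservativity of $\on{U}$ is the remark above.

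For the second condition, let $X_\bullet$ be a simplicial $T$-module such that $\on{U}X_\bullet$ is split. A split simplicial object has a colimit that is absolute, i.e.\ preserved by every functor (\cite[Lemma 6.1.3.16]{HTT}). In particular $T$ preserves $|\on{U}X_\bullet|$, and $T\circ T$ does too. The module structure maps therefore pass to the colimit, so $|\on{U}X_\bullet|$ acquires a $T$-module structure, and it is a colimit of $X_\bullet$ in $\on{LMod}_T(\mathcal{C})$. Consequently $\on{U}$ both preserves and reflects these colimits.

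\textbf{(2)$\Rightarrow$(1).} Here I would build the left adjoint of $G$ explicitly through the bar construction.

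For a $T$-module $M$, consider the bar resolution $\on{Bar}_\bullet(M)=T^{\bullet+1}M$. It is $\on{U}$-split, with extra degeneracies given by the unit of $T$, and its colimit is $M$. Apply $\gamma^\ast$ levelwise to get $Y_\bullet=\gamma^\ast T^{\bullet}M$ in $\mathcal{D}$. Then $\gamma_\ast Y_\bullet$ is the bar resolution, so $Y_\bullet$ is $\gamma_\ast$-split. By hypothesis $Y_\bullet$ has a colimit in $\mathcal{D}$, and I set
\[F(M):=|\gamma^\ast T^{\bullet}M|.\]

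One checks that $F$ is left adjoint to $G$. On free modules $F(TN)\simeq\gamma^\ast N$, and every module is a $\on{U}$-split colimit of free modules, so the adjunction follows from the corresponding one for free modules. Hence $F\simeq(\gamma^\ast)^{enh}$.

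\emph{Unit.} Because $\gamma_\ast$ preserves the colimit of $Y_\bullet$, we get
\[GF(M)\simeq|\gamma_\ast\gamma^\ast T^\bullet M|=|T^{\bullet+1}M|\simeq M.\]
So the unit $M\to GF(M)$ is an equivalence.

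\emph{Counit.} Apply $G$ to $\varepsilon_d:FG(d)\to d$. The triangle identity gives $G(\varepsilon_d)\circ\eta_{G(d)}\simeq\on{id}$. Since $\eta_{G(d)}$ is an equivalence by the unit step, so is $G(\varepsilon_d)$. Conservativity of $G$ then shows that $\varepsilon_d$ is an equivalence, so $F$ and $G$ are mutually inverse.

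\textbf{Main obstacle.} I expect the hardest part to be making the bar construction and the left adjoint $F$ functorial at the $\infty$-categorical level. This needs the coherent description of $T$-modules as algebras over $T$ viewed as an algebra in $\on{End}(\mathcal{C})$, and the identification of free modules (\cite[Section 4.7.2–4.7.3]{HA}). The remaining steps are formal once absolute colimits of split simplicial objects are available.
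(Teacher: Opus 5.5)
Your proposal is correct and follows essentially the standard proof of \cite[Theorem 4.7.3.5]{HA}; the paper gives no argument of its own and only cites this result. Necessity comes from conservativity of the forgetful functor and absoluteness of split geometric realizations, and sufficiency from realizing the $\gamma_\ast$-split bar construction $\gamma^\ast T^\bullet M$, with the unit handled by preservation of that colimit and the counit by conservativity. One minor imprecision: $Y_\bullet$ is the two-sided bar construction $\mathrm{Bar}_\bullet(\gamma^\ast,T,M)$, whose face $d_0$ uses the counit $\gamma^\ast\gamma_\ast\to\on{id}$, so it is not literally $\gamma^\ast$ applied levelwise to the bar resolution; your formula and the identification $\gamma_\ast Y_\bullet\simeq T^{\bullet+1}M$ are nonetheless right.
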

The factorization through $\on{LMod}_{\gamma_\ast\gamma^\ast(1)\otimes-}(\mathcal{C})$ is induced by a map of monads \[a: \gamma_\ast\gamma^\ast(1)\otimes-\rightarrow\gamma_\ast\gamma^\ast.\]More precisely, $a$ is the composite of the following maps of monads:\[
\begin{aligned}
\gamma_\ast\gamma^\ast(1)\otimes- &\simeq \gamma_\ast\gamma^\ast(1)\otimes\on{id}(-)\\
&\xrightarrow{\on{id}\otimes\eta}\gamma_\ast\gamma^\ast(1)\otimes\gamma_\ast\gamma^\ast(-)\\
&\xrightarrow{\mu}\gamma_\ast(\gamma^\ast(1)\otimes\gamma^\ast(-))\\
&\simeq\gamma_\ast\gamma^\ast,
\end{aligned}\] 
where $\mu$ is given by the lax monoidal structure on $\gamma_\ast$. This $a$ induces a functor $\alpha_\ast: \on{LMod}_{\gamma_\ast\gamma^\ast}(\mathcal{C})\rightarrow\on{LMod}_{\gamma_\ast\gamma^\ast(1)\otimes-}(\mathcal{C})$, which is a right adjoint by the adjoint functor theorem. The left adjoint is the desired $\alpha$, which fits in the left side commutative triangle since all the right adjoints of arrows in the triangle are forgetful functors. We will now explain that under some rigid generation conditions, $\alpha$ becomes an equivalence.
\begin{defn}
Let $\mathcal{C}$ be a stable, compactly generated, presentably monoidal category. We say that $\mathcal{C}$ is rigidly generated if $\mathcal{C}$ is generated under filtered colimits by (a small set of) compact, dualizable objects.
\end{defn}
\begin{lm}\label{same.modules}
Let $\gamma^\ast: \mathcal{C} \rightleftarrows \mathcal{D}: \gamma_\ast$ be an adjunction of symmetric monoidal categories. Suppose that $\gamma^\ast$ is monoidal, $\mathcal{C}$ is rigidly generated, $\mathcal{D}$ has compact unit, and colimits distribute over tensor product in $\mathcal{C}$ and $\mathcal{D}$, then $\alpha$ is fully faithful. Moreover, if $\gamma_\ast$ is conservative, then $\alpha$ is an equivalence.
\end{lm}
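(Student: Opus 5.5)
Write $A:=\gamma_\ast\gamma^\ast(1)$. The plan is to compare the two monads $T_1:=A\otimes-$ and $T_2:=\gamma_\ast\gamma^\ast$ through the map of monads $a$. The goal is to show that $a$ is an equivalence of endofunctors of $\mathcal{C}$. Once $a$ is an equivalence, the induced functor $\alpha_\ast$ on module categories is an equivalence, hence so is its left adjoint $\alpha$. I expect, however, that under the stated hypotheses one only gets this after restricting to a suitable class of objects. I will therefore argue for full faithfulness directly and obtain the equivalence from conservativity.

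\textbf{Step 1 (dualizable objects).} Let $c\in\mathcal{C}$ be compact and dualizable, with dual $c^\vee$. Since $\gamma^\ast$ is strongly monoidal, $\gamma^\ast(c)$ is dualizable with dual $\gamma^\ast(c^\vee)$. For any $x\in\mathcal{C}$ this gives a chain of natural equivalences
\[\on{Map}(x,\gamma_\ast\gamma^\ast c)\simeq\on{Map}(\gamma^\ast(x\otimes c^\vee),1)\simeq\on{Map}(x\otimes c^\vee,A)\simeq\on{Map}(x,A\otimes c).\]
Using the lax monoidal structure of $\gamma_\ast$ (the projection formula), I will check that this composite is exactly induced by $a_c$. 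Hence $a_c:A\otimes c\to\gamma_\ast\gamma^\ast c$ is an equivalence.

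\textbf{Step 2 (extend by colimits).} The source $A\otimes-$ preserves all colimits, because colimits distribute over $\otimes$ in $\mathcal{C}$. The target $\gamma_\ast\gamma^\ast$ preserves filtered colimits: compactness of $1_{\mathcal{D}}$ together with dualizability of the $\gamma^\ast c$ shows that the $\gamma^\ast c$ are compact in $\mathcal{D}$. Since the $c$ generate $\mathcal{C}$, $\gamma_\ast$ then commutes with filtered colimits. This is where the compact-unit hypothesis enters. The functors are exact in the stable setting, and $\mathcal{C}$ is generated under filtered colimits by compact dualizables, so $a$ is an equivalence on all of $\mathcal{C}$.

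\textbf{Step 3 (conclude).} Because $a$ is an equivalence of monads, $\on{LMod}_{A\otimes-}(\mathcal{C})\simeq\on{LMod}_{\gamma_\ast\gamma^\ast}(\mathcal{C})$, so $\alpha$ is an equivalence and in particular fully faithful. If this global argument fails, I fall back on a weaker one. Free modules $T_1(c)$ on compact dualizable $c$ generate $\on{LMod}_{A\otimes-}(\mathcal{C})$, and on these $\alpha$ is fully faithful by Step 1 and the free–forgetful adjunction. Both sides preserve colimits and the free modules on generators are compact, so fully faithfulness extends. For the last assertion, assume $\gamma_\ast$ is conservative. Then Lurie–Barr–Beck identifies $\mathcal{D}$ with $\on{LMod}_{\gamma_\ast\gamma^\ast}(\mathcal{C})$: $\gamma_\ast$ preserves geometric realizations, since it preserves colimits by Step 2 and exactness. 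A fully faithful colimit-preserving $\alpha$ whose right adjoint $\alpha_\ast$ is conservative is an equivalence, and $\alpha_\ast$ is conservative because the forgetful functors are.

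\textbf{Main obstacle.} The hardest part is Step 1's compatibility check, namely that the abstract equivalence coming from duality really is $a_c$ built from $\eta$ and $\mu$. This requires unwinding the lax structure $\mu$ of $\gamma_\ast$ against the evaluation and coevaluation maps.
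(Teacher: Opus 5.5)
Your proposal is correct and is essentially the paper's argument. The paper also establishes the projection-formula map on dualizable objects by the same Yoneda chain, identified with the map built from $\eta$ and $\mu$ by a diagram chase. It then extends to all of $\mathcal{C}$ using that $\gamma_\ast$ preserves filtered colimits, because $\gamma^\ast$ sends the compact dualizable generators to compact objects when $1_{\mathcal{D}}$ is compact. The differences are cosmetic: the paper proves $\gamma_\ast Y\otimes X\simeq\gamma_\ast(Y\otimes\gamma^\ast X)$ for all $Y$, whereas you need only $Y=1_{\mathcal{D}}$, which is exactly the monad map $a$. Your fallback argument and the Barr--Beck detour are unnecessary, since $a$ being an equivalence already makes $\alpha$ an equivalence.
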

\begin{proof}
We give a sketch of proof. The idea follows \cite[Appendix A]{BHS}.\\
It suffices to show the projection formula holds. That is, for all $X$ and $Y$,\[
p: \gamma_\ast Y\otimes X\xrightarrow{\on{id}\otimes\eta}\gamma_\ast Y\otimes\gamma_\ast\gamma^\ast X\xrightarrow{\mu}\gamma_\ast(Y\otimes\gamma^\ast X)
\]
is an isomorphism. This is true for dualizable $X$, since by the Yoneda lemma, we have\[
\begin{aligned}
\mathcal{C}(Z,\gamma_\ast Y\otimes X)&\simeq \mathcal{C}(Z\otimes X^\vee,\gamma_\ast Y)\\
&\simeq \mathcal{D}(\gamma^\ast Z\otimes\gamma^\ast (X^\vee), Y)\\
&\simeq \mathcal{D}(\gamma^\ast Z\otimes(\gamma^\ast X)^\vee, Y)\\
&\simeq \mathcal{D}(\gamma^\ast Z, Y\otimes\gamma^\ast X)\\
&\simeq \mathcal{C}(Z, \gamma_\ast(Y\otimes\gamma^\ast X)).
\end{aligned}\]
Moreover, by a diagram chasing, we see that the above isomorphism is induced precisely by $p$. To show the general case, since colimits in $\mathcal{C}$ and $\mathcal{D}$ commute with tensor product and $\mathcal{C}$ is rigidly generated, it reduces to show that $\gamma_\ast$ preserves colimits. But this is true since $\gamma^\ast$ sends compact dualizable objects (which form a family of generators) to compact objects by the compactness of unit in $\mathcal{D}$.
\end{proof}
Finally, we have a general equivalence between categories of modules over an algebra and that over a monad of a certain form:
\begin{lm}
Let $\mathcal{C}$ be a symmetric monoidal category and $A$ be an $\mathbb{E}_\infty$-algebra in $\mathcal{C}$. If we denote by $\on{Mod}_A(\mathcal{C})$ the category of modules over $A$, then we have an equivalence
\[
\beta:\on{Mod}_A(\mathcal{C})\rightarrow\on{Mod}_{A\otimes-}(\mathcal{C}).\]
\end{lm}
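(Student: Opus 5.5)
We construct $\beta$ by comparing both sides as categories of algebras over a monad on $\mathcal{C}$, and then apply the Lurie--Barr--Beck theorem recalled above (\cite[Theorem 4.7.3.5]{HA}).

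First, we set up the free--forgetful adjunction
\[
A\otimes-:\mathcal{C}\rightleftarrows\on{Mod}_A(\mathcal{C}):\on{U}.
\]
The free module on $c$ is $A\otimes c$. The monad $\on{U}(A\otimes-)$ on $\mathcal{C}$ is precisely the monad $A\otimes-$: its multiplication is induced by $\mu_A:A\otimes A\rightarrow A$ and its unit by $1\rightarrow A$. Using the associativity and unit constraints of $\mathcal{C}$, this identification holds as monads, i.e.\ as algebras in $\on{End}(\mathcal{C})$, and not merely as endofunctors. By the discussion of \cite[Proposition 4.7.3.3]{HA} recalled in Section \ref{fac}, we obtain a canonical functor
\[
\beta=(\on{U})^{enh}:\on{Mod}_A(\mathcal{C})\rightarrow\on{LMod}_{A\otimes-}(\mathcal{C})
\]
lying over $\mathcal{C}$. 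To identify the monads cleanly, we would use that left $A$-modules in $\mathcal{C}$ are left modules for the algebra $A\otimes-$ in $\on{End}(\mathcal{C})$ via the monoidal functor $\mathcal{C}\rightarrow\on{End}(\mathcal{C})$, $a\mapsto a\otimes-$. This is the content of \cite[Section 4.7.3]{HA}; the $\mathbb{E}_\infty$-hypothesis is only used to regard $A$ as an $\mathbb{E}_1$-algebra.

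Next, we verify the Barr--Beck conditions for $\on{U}$. Conservativity holds because equivalences of $A$-modules are detected on underlying objects. For a $\on{U}$-split simplicial object $M_\bullet$ in $\on{Mod}_A(\mathcal{C})$, the split simplicial object $\on{U}M_\bullet$ has a colimit in $\mathcal{C}$. Split geometric realizations are absolute colimits, so they are preserved by any functor, in particular by $A\otimes-$ and by $A\otimes A\otimes-$. Hence the $A$-action descends to $|\on{U}M_\bullet|$, the result is the colimit of $M_\bullet$ in $\on{Mod}_A(\mathcal{C})$, and $\on{U}$ preserves and reflects it. This is the standard argument of \cite[Corollary 4.2.3.3, Proposition 4.2.3.5]{HA}. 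Consequently $\on{U}$ is monadic, and $\beta$ is an equivalence.

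The main obstacle is the coherent identification of the monad $\on{U}\circ(A\otimes-)$ with $A\otimes-$ as an $\mathbb{E}_1$-algebra in $\on{End}(\mathcal{C})$, rather than only at the level of homotopy categories. We would avoid constructing this identification by hand and instead cite \cite[Proposition 4.7.3.3 and Section 4.7.3]{HA}, which treats exactly this monad coming from the action of an algebra object.
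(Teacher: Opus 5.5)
Your proposal is correct and follows the same route as the paper, whose proof is the one-line application of \cite[Theorem 4.7.3.5]{HA} to the forgetful functor $\on{Mod}_A(\mathcal{C})\rightarrow\mathcal{C}$; you additionally spell out conservativity, the treatment of $\on{U}$-split simplicial objects via absoluteness of split geometric realizations, and the identification of the monad $\on{U}(A\otimes-)$ with $A\otimes-$. One small caution: do the monad comparison via the canonical algebra map from $A\otimes-$ to the endomorphism monad of $\on{U}$, checked to be an equivalence on underlying endofunctors, because citing ``left $A$-modules are left modules over $A\otimes-$ in $\on{End}(\mathcal{C})$'' already states the lemma and makes the Barr--Beck step redundant.
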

\begin{proof}
Apply \cite[Theorem 4.7.3.5]{HA} to the forgetful functor $\on{Mod}_A(\mathcal{C})\rightarrow\mathcal{C}$.\end{proof}

\subsection{Category of motives}
\subsubsection{$\mathcal{SH}$}\label{SH}
In this section, we recall Morel-Voevodsky's construction of stable motivic homotopy category $\mathcal{SH}$. We emphasis especially the symmetric monoidal structure on $\mathcal{SH}$ and the compact generation of $\mathcal{SH}$. We work with a slightly more general setting as in \cite[Appendix C]{Hoy}.

We begin by recalling some topos theory. Let $\mathcal{C}$ be a category. A quasi-topology $\tau$ on $\mathcal{C}$ assigns, to each $X\in\mathcal{C}$, a collection of sieves $\tau(X)$ on $X$, such that, for every $f: Y\rightarrow X$, $f^\ast\tau(X)\subset\tau(Y).$ A presheaf $F$ on $\mathcal{C}$ is a $\tau$-sheaf if it is local with respect to all $R\hookrightarrow X$, where $R\in\tau(X)$. Let $\overline{\tau}$ be the coarsest Grothendieck topology containing $\tau$, \cite[Corollary C.2]{Hoy} shows that the category of $\tau$-sheaves is the same as the category of $\overline{\tau}$-sheaves, where the latter is in the classical sense. In the following, we denote the equivalent category by $Shv_\tau(\mathcal{C}).$

Let $S$ be a scheme. We denote by $Sm(S)$ the category of smooth $S$-schemes and by $Sm^{\on{fp}}(S)$ the subcategory of smooth $S$-schemes of finite presentation. We consider the Nisnevich topology on $Sm(S)$ (resp. $Sm^{\on{fp}}(S)$) as the coarsest Grothendieck topology containing the Nisnevich quasi-topology: $\operatorname{Nis}=\operatorname{Zar} \cup \operatorname{Nis}_{\mathrm{qc}}$ (resp. $\operatorname{Nis}=\operatorname{Nis}^{\on{fp}}_{\mathrm{qc}}$), where $\operatorname{Zar}$ means the usual Zariski covering sieves and $\operatorname{Nis}_{\mathrm{qc}}$ (resp. $\operatorname{Nis}^{\on{fp}}_{\mathrm{qc}}$) consists of sieves:
\begin{itemize}
\item the empty sieve on $\emptyset$;
\item for every Nisnevich square (resp. Nisnevich square of finite presentation), that is, a Cartesian square of schemes
\[\begin{tikzcd}
	W && V \\
	\\
	U && X
	\arrow[hook, from=1-1, to=1-3]
	\arrow[from=1-1, to=3-1]
	\arrow["p", from=1-3, to=3-3]
	\arrow["j", hook, from=3-1, to=3-3],
\end{tikzcd}\]
where $j$ is an open immersion, $p$ is étale, and there exists a closed immersion $Z \hookrightarrow X$ complement to $U$ such that $p$ induces an isomorphism $V \times_X Z \simeq Z$ (resp. moreover, $j$ and $p$ are finite presented), the sieve generated by $\left\{j, p\right\}$.
\end{itemize}
We say that a presheaf $F$ on $Sm(S)$ (resp. $Sm^{\on{fp}}(S)$) satisfies Nisnevich excision if and only if:
\begin{itemize}
\item $F(\emptyset)\simeq\ast$;
\item for every Nisnevich square $Q$ (resp. Nisnevich square of finite presentation), $F(Q)$ is Cartesian. 
\end{itemize}
\begin{lm}(Proposition C.5.(2) [Hoy])\label{Sheaf.Theory.of.Nis.}
Let $S$ be an arbitrary base scheme, then:
\begin{itemize}
\item A presheaf on $Sm(S)$ is a Nisnevich sheaf if and only if it satisfies Zariski descent and Nisnevich excision.
\end{itemize}
If $S$ is moreover qcqs, then:
\begin{itemize}
\item A presheaf $F$ on $Sm(S)$ is a Nisnevich sheaf if and only if it is a right Kan extension of a Nisnevich sheaf on $Sm^{\on{fp}}(S)$. In particular, $Shv_{\on{Nis}}(Sm(S))\simeq Shv_{\on{Nis}}(Sm^{\on{fp}}(S))$.
\item A presheaf on $Sm^{\on{fp}}(S)$ is a Nisnevich sheaf if and only if it satisfies Nisnevich excision.
\end{itemize}
\end{lm}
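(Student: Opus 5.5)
The statement is Hoyois's Proposition C.5(2), so the plan is to reproduce its argument using the quasi-topology formalism recalled above. The key input is \cite[Corollary C.2]{Hoy}: for a quasi-topology $\tau$, the $\tau$-sheaves coincide with the $\overline{\tau}$-sheaves. Each bullet then amounts to unwinding what locality with respect to the generating sieves of $\tau$ means.

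\textbf{First bullet.} We take $\tau=\operatorname{Zar}\cup\operatorname{Nis}_{\mathrm{qc}}$ on $Sm(S)$, whose generated topology is by definition the Nisnevich topology.
\begin{itemize}
\item Being local with respect to the Zariski sieves is Zariski descent.
\item Being local with respect to the empty sieve on $\emptyset$ means $F(\emptyset)\simeq\ast$.
\item Being local with respect to the sieve $R$ generated by $\{j,p\}$ of a Nisnevich square needs a small computation. Because $j$ is a monomorphism, the Čech nerve of $U\sqcup V\to X$ reduces, and we get a pushout $h_U\sqcup_{h_W}h_V\simeq R$ in presheaves of anima. Here the key input is $V\times_X V\simeq V\sqcup (\text{something over } U)$, coming from the étale diagonal and the isomorphism over $Z$.
\end{itemize}
Hence $\mathrm{Map}(R,F)\simeq F(U)\times_{F(W)}F(V)$, and locality is exactly the Cartesianness of $F(Q)$. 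Combining the three points with Corollary C.2 gives the first bullet.

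\textbf{Third bullet.} Over qcqs $S$, the same computation with $\tau=\operatorname{Nis}^{\mathrm{fp}}_{\mathrm{qc}}$ on $Sm^{\mathrm{fp}}(S)$ applies verbatim; no Zariski clause is needed because the topology on $Sm^{\mathrm{fp}}(S)$ was defined via $\operatorname{Nis}^{\mathrm{fp}}_{\mathrm{qc}}$ alone. Implicitly this uses that finite Zariski covers of qcqs schemes are generated by iterated Zariski squares, which are special Nisnevich squares.

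\textbf{Second bullet.} Write $\iota:Sm^{\mathrm{fp}}(S)\hookrightarrow Sm(S)$. Every smooth $S$-scheme is Zariski-locally of finite presentation, since smooth morphisms are locally of finite presentation and affine opens over affine opens of $S$ are fp over qcqs $S$.

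For the direction from $Sm^{\mathrm{fp}}(S)$ to $Sm(S)$, let $G$ be a Nisnevich sheaf on $Sm^{\mathrm{fp}}(S)$ and $F=\iota_*G$ its right Kan extension. For $X\in Sm(S)$ we have $F(X)=\lim G(Y)$ over fp $Y$ under $X$... and I would rather use the concrete description: the right Kan extension evaluated on $X$ is the limit over the Zariski site of fp opens, by cofinality of fp open subschemes mapping in. From this one checks Zariski descent and Nisnevich excision on $Sm(S)$, and the first bullet then shows $F$ is a sheaf.

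For the reverse direction, restriction $\iota^*$ preserves sheaves because $\iota$ preserves Nisnevich squares of finite presentation. The unit $F\to\iota_*\iota^*F$ is an equivalence on fp objects, and both sides are Zariski sheaves agreeing on the basis of fp (affine) opens, so the unit is an equivalence. This gives $Shv_{\mathrm{Nis}}(Sm(S))\simeq Shv_{\mathrm{Nis}}(Sm^{\mathrm{fp}}(S))$.

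The main obstacle is the second bullet, specifically verifying that the right Kan extension satisfies Nisnevich excision for squares that are not of finite presentation. The approach is to reduce Zariski-locally on $X$ to affine $X$, then write the square as a cofiltered limit of fp Nisnevich squares by Noetherian approximation (EGA IV, §8). The delicate point is that the extension is a limit, not a colimit, over such approximations. So I would instead prove Zariski descent first and then handle excision using the covering-sieve description: both $X$ and the Nisnevich square are covered by fp opens on which the square is fp. This is the step where I would follow Hoyois's argument most closely.
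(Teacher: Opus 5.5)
The paper gives no argument for this lemma beyond citing Hoyois, so your reconstruction has to stand on its own. Reducing, via Corollary C.2, to locality for the generating sieves is the right architecture. However, the central ``small computation'' is false. In presheaves of anima, $h_U\sqcup_{h_W}h_V\to R$ is not an equivalence for a general Nisnevich square, and ``$R$-local $\Leftrightarrow$ $F(Q)$ Cartesian'' fails square by square. Take $X=\mathbb{A}^1_S$, $U=X\setminus 0$, $V=X\sqcup U$ with $p=(\on{id},j)$, so $W=U\sqcup U$. Since $h_W\to h_V$ is objectwise injective, the pushout is objectwise the discrete set $h_U(T)\sqcup(h_V(T)\setminus h_W(T))$. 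For $T=X\sqcup U$, consider two maps $T\to V$. The first sends both summands into the component $X$ of $V$ (via $\on{id}_X$ and $j$). The second sends $X$ to $X$ and $U$ identically to the component $U$. These are distinct elements of $h_V(T)\setminus h_W(T)$ with the same image in $h_X(T)$. Here $R=h_X$ because $R$ contains $\on{id}_X$, so every presheaf is $R$-local; yet $F=h_U\sqcup_{h_W}h_V$ is not excisive for this square. Your decomposition $V\times_XV\simeq V\sqcup(\cdots)$ also needs $p$ separated, which is not assumed. Even when it holds, a test scheme can map partly into each piece, exactly as above.

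The missing idea is the \emph{diagonal square}. The opens $\Delta(V)$ and $W\times_UW$ cover $V\times_XV$, since the complement of $W\times_UW$ lies over $Z$, where $V\times_XV$ is just the diagonal. They meet in $\Delta(W)$, so this is a Zariski square and becomes a pushout after Zariski sheafification. Universality of colimits then gives $V\times_XP\simeq V$ and $U\times_XP\simeq U$, hence $P\times_XP\simeq P$ for $P=h_U\sqcup_{h_W}h_V$. So $L_{\on{Zar}}P\to h_X$ is a monomorphism and $L_{\on{Zar}}P\simeq L_{\on{Zar}}R$. Consequently ``$R$-local $\Leftrightarrow$ excisive'' holds only for presheaves with Zariski descent, which is why that hypothesis appears in the first bullet; both directions use it. For the third bullet the argument does not transfer verbatim. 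You must first derive Zariski descent on $Sm^{\on{fp}}(S)$ from excision for Zariski squares. These are fp Nisnevich squares for which $P\simeq R$ does hold objectwise, both legs being monomorphisms, and finite Zariski covers of qcqs schemes are built from them. Only then can you run the diagonal-square argument; the diagonal square of an fp square is again an fp Zariski square.

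The second bullet is left open at its crux, and both concrete claims you make there fail in the generality of $Sm(S)$. The limit $(\iota_*G)(X)=\lim_{Y\in Sm^{\on{fp}}(S)_{/X}}G(Y)$ is not computed on fp opens ``by cofinality''. Let $X$ be two copies of $S$ glued along a non-quasi-compact open; it is étale over $S$ but not quasi-separated. Then the chart map $S\sqcup S\to X$ factors through no fp open of $X$. Likewise, a Nisnevich square over $X$ need not become fp on an fp open cover, since $U$ and $V$ need not be quasi-compact over $X$.

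You do not need excision for non-fp squares at all. For a Nisnevich covering sieve $R\subset h_X$ one has $\on{Map}(R,\iota_*G)\simeq\on{Map}(\iota^*R,G)$. By universality of colimits, $\iota^*R\to\iota^*h_X$ is the colimit over $(Y\to X)\in Sm^{\on{fp}}(S)_{/X}$ of the maps $\iota^*(R\times_{h_X}h_Y)\to h_Y$. Each of these is a $\on{Nis}^{\on{fp}}$-covering sieve, because Nisnevich covers of qcqs schemes admit finite fp refinements. Local equivalences are closed under colimits, so $\iota_*G$ is a sheaf. With this in place, your converse argument goes through: the unit $F\to\iota_*\iota^*F$ is an equivalence on fp objects, and these Zariski-cover everything.
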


Now we can define the unstable motivic homotopy category $\mathcal{H}(S)$. We say that a presheaf $F$ on $Sm(S)$ is $\mathbb{A}^1$-invariant if, for every $X \in Sm(S)$, the projection $\mathbb{A}^1 \times X \rightarrow X$ induces an equivalence $F(X) \simeq F\left(\mathbb{A}^1 \times X\right)$. We denote by $\mathcal{H}(S)$ the full subcategory of $Psh(Sm(S))$ spanned by $\mathbb{A}^1$-invariant Nisnevich sheaves. When $S$ is qcqs, it follows the lemma above that $\mathcal{H}(S)$ is presentable and the inclusion $\mathcal{H}(S)\hookrightarrow Psh(Sm(S))$ admits a left adjoint $\on{L}_{\mathbb{A}^1,\on{Nis}}$ (this is still true for a general base scheme $S$, see \cite[Proposition C.5.(1)]{Hoy}). From now on, we fix $S$ to be qcqs.

Note that the subcategories of $Psh(Sm^{\on{fp}}(S))$ spanned by $\mathbb{A}^1$-invariant presheaves and presheaves satisfying Nisnevich excision are closed under filtered colimits since they are detected by finite limits. Hence $\mathcal{H}(S)$ is closed under filtered colimits as a subcategory of $Psh(Sm^{\on{fp}}(S))$. In particular, every $X\in Sm^{\on{fp}}(S)$ is compact in $\mathcal{H}(S)$. Every presheaf is a colimit of representables, thus a filtered colimits of finite colimits of representables. Since the right adjoint $\mathcal{H}(S)\hookrightarrow Psh(Sm(S))$ preserves filtered colimits, $\on{L}_{\mathbb{A}^1,\on{Nis}}$ preserves compactness. As a consequence, every object $F\in\mathcal{H}(S)$ is of the form\[
F\simeq fil.colim.(\on{L}_{\mathbb{A}^1,\on{Nis}}(fin.colim. (X_i))),\]
where $X_i\in Sm^{\on{fp}}(S)$ and the finite colimits are computed in the presheaf category. This witnesses the compact generation of $\mathcal{H}(S)$.

Since $\mathcal{H}(S)$ is Cartesian, we equip it with a symmetric monoidal structure in which the tensor product is given by Cartesian product. It follows from the general facts that the pointed category $\mathcal{H}_\ast(S)$ admits a unique symmetric monoidal structure $\wedge$ which is compatible with colimits and such that the functor $(-)_+: \mathcal{H}(S)\rightarrow \mathcal{H}_\ast(S)$ is symmetric monoidal.

We then define $\mathcal{SH}(S)$ to be the formal inversion of $\mathcal{H}_\ast(S)$ with respect to $(\mathbb{P}^1,\infty)$, which is automatically symmetric monoidal. Since $\mathbb{P}^1$ is symmetric in motivic space (\cite[Theorem 4.3, Lemma 4.4]{Voe}), the telescope model applies. This gives the compact generation of $\mathcal{SH}(S)$.
\begin{lm}(\cite[Proposition C.12]{Hoy})
Let $\Sigma^\infty_+ : \mathcal{H}(S)\rightarrow \mathcal{SH}(S)$ be the composition of adding a point and the formal inversion functor. Assuming the base scheme $S$ is qcqs, then the followings are true:
\begin{enumerate}
\item $\mathcal{SH}(S)$ is generated under colimits by objects of the form $\Sigma^{-n}_{\mathbb{P}^1}\Sigma^\infty_+X$ for $X\in Sm^{\on{fp}}(S)$ and $n\geq 0$.
\item $\Sigma^\infty_+X$ is compact for $X\in Sm^{\on{fp}}(S)$.
\item $\mathcal{SH}(S)$ is compactly generated.
\end{enumerate}
\end{lm}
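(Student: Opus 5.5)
We prove the three assertions in the order (2), (3), (1), using the telescope model of $\mathcal{SH}(S)$ from Proposition \ref{tel}. That model applies because $(\mathbb{P}^1,\infty)$ is symmetric in $\mathcal{H}_\ast(S)$. We have already seen that $\mathcal{H}(S)$ is compactly generated by the objects $\on{L}_{\mathbb{A}^1,\on{Nis}}(X)$ with $X\in Sm^{\on{fp}}(S)$. The same argument, applied to the finite colimits $X_+=X\sqcup S$ pointed at $S$, shows that $\mathcal{H}_\ast(S)$ is compactly generated by the $X_+$. Moreover, $\mathbb{P}^1$ (pointed at $\infty$) is compact in $\mathcal{H}_\ast(S)$: it is the cofiber of $S_+\to\mathbb{P}^1_+$, and $\mathbb{P}^1\in Sm^{\on{fp}}(S)$.

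\textbf{Step 1: compactness.} We first show that $-\wedge\mathbb{P}^1:\mathcal{H}_\ast(S)\to\mathcal{H}_\ast(S)$ preserves compact objects. It suffices to check this on generators. For these, $X_+\wedge\mathbb{P}^1$ is the cofiber of $(X\times S)_+\to(X\times\mathbb{P}^1)_+$, which is a finite colimit of compact objects. Hence the right adjoint $\Omega_{\mathbb{P}^1}$ preserves filtered colimits.

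Next, by \cite[Lemma 5.5.7.6]{HTT}-type arguments, a colimit in $\PrL$ of a sequential diagram of compactly generated categories along compact-preserving functors is compactly generated. Concretely, we identify $Tel_{\mathbb{P}^1}(\mathcal{H}_\ast(S))$ with the limit of the right adjoints $\Omega_{\mathbb{P}^1}$ in $\PrL$ (equivalently $\PrR$). Its compact objects are then generated by the images of compact objects under the canonical functors $\mathcal{H}_\ast(S)\to\mathcal{SH}(S)$ from the $n$-th stage. The $n$-th stage insertion of $X_+$ is $\Sigma^{-n}_{\mathbb{P}^1}\Sigma^\infty_+X$.

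In particular $\Sigma^\infty_+X$, the image from the $0$-th stage, is compact. To verify this directly, compute
\[\on{Map}_{\mathcal{SH}}(\Sigma^\infty_+X,E)\simeq\on{Map}_{\mathcal{H}_\ast}(X_+,E_0),\]
where $E=(E_n)$ is an $\Omega_{\mathbb{P}^1}$-spectrum. Filtered colimits of spectra are computed levelwise, because each $\Omega_{\mathbb{P}^1}$ commutes with filtered colimits. Combined with the compactness of $X_+$, this gives (2).

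\textbf{Step 2: generation.} For (1) and (3), we show that the objects $\Sigma^{-n}_{\mathbb{P}^1}\Sigma^\infty_+X$ detect equivalences. Take $E$ with
\[\on{Map}(\Sigma^{-n}_{\mathbb{P}^1}\Sigma^\infty_+X,E)\simeq\on{Map}_{\mathcal{H}_\ast}(X_+,E_n)\simeq\ast\]
for all $X$ and $n$. Then each $E_n\simeq\ast$, since the $X_+$ generate $\mathcal{H}_\ast(S)$, and therefore $E\simeq 0$.

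At this point $\mathcal{SH}(S)$ is known to be stable: it is the formal inversion of $\mathbb{P}^1\simeq S^1\wedge\mathbb{G}_m$, so $S^1$ becomes invertible. In a stable presentable category, a set of compact objects that detects zero generates the category under colimits. This gives (1), and together with Step 1 it gives (3).

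The main obstacle is Step 1, namely identifying mapping spaces in the colimit with levelwise data. This requires the description of the $\PrL$-colimit as the limit along $\Omega_{\mathbb{P}^1}$, together with the fact that filtered colimits of $\Omega_{\mathbb{P}^1}$-spectra are levelwise. That fact depends on the compactness of $\mathbb{P}^1$ established above.
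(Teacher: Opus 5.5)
Your proof is correct. For (2) it is the paper's argument. Both of you write the telescope model as the limit along $\Omega_{\mathbb{P}^1}$ and reduce compactness of $\Sigma^\infty_+X$ to $\Omega_{\mathbb{P}^1}$ commuting with filtered colimits. Your check that $X_+\wedge\mathbb{P}^1$ is a finite colimit of compact objects supplies a step the paper skips. Compactness of $(\mathbb{P}^1,\infty)$ alone only controls $\on{Map}(\mathbb{P}^1,-)$, not the internal hom $\Omega_{\mathbb{P}^1}$.

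For (1) the routes differ.

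The paper uses the $\mathbb{P}^1$-version of \cite[Lemma 6.3.3.6]{HA}, namely $E\simeq\on{colim}_n\Sigma^{-n}_{\mathbb{P}^1}\Sigma^\infty_{\mathbb{P}^1}E_n$. It then writes each $E_n$ as a colimit of objects $X_+$. This exhibits every $E$ explicitly as an iterated colimit of the generators, and uses neither stability nor any detection principle.

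You instead show that the objects $\Sigma^{-n}_{\mathbb{P}^1}\Sigma^\infty_+X$ detect zero on mapping spaces. You then invoke stability (via $\mathbb{P}^1\simeq S^1\wedge\mathbb{G}_m$) and the principle that a set of compact objects detecting zero generates. This is the standard formal route, and it gives (3) at the same time.

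It matters that your detection is on mapping spaces, i.e.\ uses only nonnegative shifts. Detection on mapping spectra alone would only give generation under colimits and desuspensions: the sphere detects zero in $\on{Sp}$ but generates only the connective spectra under colimits.

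Your Step 1 already proves (1) and (3) on its own. Compact objects of the sequential $\PrL$-colimit are retracts of images of compact objects from the stages, i.e.\ retracts of finite colimits of the $\Sigma^{-n}_{\mathbb{P}^1}\Sigma^\infty_+X$. So every object is a filtered colimit of such objects, and Step 2 is a second proof rather than a necessary one.
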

\begin{proof}
Note first that $\mathcal{H}_\ast(S)$ is compactly generated by $X_+$ with $X\in Sm^{\on{fp}}(S)$.
\begin{enumerate}
\item Recall the telescope model: \[
\begin{aligned}
\mathcal{SH}(S)&\simeq colim (\cdots\mathcal{H}_\ast(S)\xrightarrow{\Sigma_{\mathbb{P}^1}}\mathcal{H}_\ast(S)\xrightarrow{\Sigma_{\mathbb{P}^1}}\mathcal{H}_\ast(S))\\
&\simeq lim (\cdots\mathcal{H}_\ast(S)\xrightarrow{\Omega_{\mathbb{P}^1}}\mathcal{H}_\ast(S)\xrightarrow{\Omega_{\mathbb{P}^1}}\mathcal{H}_\ast(S)).
\end{aligned}\]
For each $E\in\mathcal{SH}(S)$, in its second presentation, if we denote the $n$-th component by $E_n$, by \cite[lemma 6.3.3.6]{HA}, we have\[
E\simeq colim_\mathbb{N}\Sigma^{-n}_{\mathbb{P}^1}\Sigma^\infty_{\mathbb{P}^1}E_n,
\]
and each $E_n\in\mathcal{H}_\ast(S)$, thus is of the form $colim X_+$. By commuting these colimits, we get the desired form.
\item Since such $X_+$ is compact in $\mathcal{H}_\ast(S)$, it suffices to show that $\Sigma^\infty$ preserves compact objects, alternatively, the right adjoint $\Omega^\infty$ preserves filtered colimits. Again by the second telescope construction, it suffices to show $\Omega_{\mathbb{P}^1}$ preserves filtered colimits. This is true since $(\mathbb{P}^1,\infty)$ is compact.
\item Combine (1) and (2).
\end{enumerate}
\end{proof}

\subsubsection{$\mathcal{MS}$}
We recall the stable, non-$\mathbb{A}^1$-invariant motivic homotopy category $\mathcal{MS}$ constructed by Annala-Iwasa. This section follows from \cite{AHI} and \cite{AI}.

In this section, and in the following sections, when the context is non-$\mathbb{A}^1$-invariant motivic homotopy theory, by schemes we mean derived schemes by default, and we denote the $\infty$-category of (derived) smooth $S$-schemes by $Sm(S)$. We begin by allowing arbitrary base scheme $S$, and then restrict ourselves to only qcqs base schemes to ensure compact generation of motivic homotopy categories and compactness of $\mathbb{P}^1$. We first recall the notion of smooth (resp. elementary) blowup excision. For the notion of blowup of a derived scheme at a quasi-smooth closed subscheme, we refer to \cite{KR}.
\begin{defn}
Let $\mathcal{C}$ be a category and $F: Sm(S)^{op}\rightarrow \mathcal{C}$ be a $\mathcal{C}$-valued presheaf.
\begin{enumerate}
\item We say that $F$ satisfies smooth blowup excision if $F(\emptyset)$ is a final object of $\mathcal{C}$ and for every closed immersion $i: Z \hookrightarrow X$ in $Sm(S)$, $F$ sends the blowup square
\[\begin{tikzcd}
	E && {Bl_ZX} \\
	\\
	Z && X
	\arrow[hook, from=1-1, to=1-3]
	\arrow[from=1-1, to=3-1]
	\arrow[from=1-3, to=3-3]
	\arrow["i", hook, from=3-1, to=3-3]
\end{tikzcd}\]
to a cartesian square.
\item A closed immersion $i: Z \hookrightarrow X$ is called elementary if, Zariski-locally on $X$, it is the zero section of $\mathbb{A}_Z^n \sqcup Y$ for some $n \geq 0$ and some $Y$. We say that $F$ satisfies elementary blowup excision if (1) holds whenever $i$ is elementary.
\end{enumerate}
We denote by $Psh_{\on{sbu}}(Sm(S))\subset Psh_{\on{ebu}}(Sm(S))$ the full subcategories of $Psh(Sm(S))$ of presheaves satisfying the corresponding excision, and by $\on{L}_{\on{sbu}}$ and $\on{L}_{\on{ebu}}$ the corresponding localization functors, which preserve finite products.
\end{defn}
For Nisnevich sheaves of spectra, there is no difference between elementary and smooth blowup excision:
\begin{lm}(\cite[Proposition 2.2]{AHI})
Suppose that $\mathcal{C}$ is stable and that $F: Sm(S)^{op} \rightarrow \mathcal{C}$ satisfies Nisnevich descent and elementary blowup excision. Then F satisfies smooth blowup excision.
\end{lm}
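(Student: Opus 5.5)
The plan is to reduce smooth blowup excision to elementary blowup excision by working locally and deforming to the normal cone. Fix a closed immersion $i: Z\hookrightarrow X$ in $Sm(S)$. Since $Z$ and $X$ are smooth over $S$, $i$ is a regular (quasi-smooth) immersion with normal bundle $N=N_ZX$. Write
\[
\operatorname{cofib}_F(X,Z):=\operatorname{cofib}\bigl(F(X)\to F(Z)\times_{F(E)}F(Bl_ZX)\bigr).
\]
We must show $\operatorname{cofib}_F(X,Z)=0$. Stability lets us phrase cartesianness of the blowup square as the vanishing of this single object, and it lets us manipulate such cofibers additively.

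\textbf{Step 1 (reduction to local situations).} Blowups commute with flat, in particular \'etale or open, base change. Hence for a Nisnevich square with $X=U\cup V$, the blowup squares of $Z\cap U\subset U$, $Z\times_X V\subset V$ and so on form a cube. Nisnevich descent for $F$, applied to each corner of the blowup square, shows that the total fiber construction is compatible: $\operatorname{cofib}_F(X,Z)$ is the pullback of the corresponding objects on $U$, $V$ and $W$. So the problem is Nisnevich-local on $X$. By induction on a finite cover, it suffices to treat the case where $X$ is a Nisnevich (indeed \'etale) neighbourhood of $Z$ that admits an \'etale map to $\mathbb{A}^n_Z$ carrying $Z$ isomorphically to the zero section. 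Here I use the standard local structure of smooth pairs: Zariski-locally, $Z\subset X$ is cut out by part of an \'etale coordinate system.

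\textbf{Step 2 (\'etale neighbourhood to normal bundle).} Given an \'etale $p:X\to\mathbb{A}^n_Z$ with $p^{-1}(0)\supset Z$, shrink $X$ by removing the other components of $p^{-1}(0_Z)$. The square
\[
\begin{tikzcd}
X\setminus Z\arrow[r]\arrow[d] & X\arrow[d]\\
\mathbb{A}^n_Z\setminus 0\arrow[r] & \mathbb{A}^n_Z
\end{tikzcd}
\]
is then a Nisnevich square, and its complement condition holds with respect to $Z$. Combined with the analogous square for blowups, Nisnevich excision identifies $\operatorname{cofib}_F(X,Z)$ with $\operatorname{cofib}_F(\mathbb{A}^n_Z,Z)$. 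Away from $Z$ the blowup is an isomorphism, so those terms cancel. The remaining case is the zero section of a trivial bundle, which is elementary, so elementary excision gives vanishing.

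\textbf{Main obstacle.} The main obstacle is Step 1's gluing together with Step 2's excision argument. I must check carefully that the relevant squares of blowups really are Nisnevich squares, that is, that they are cartesian and satisfy the isomorphism over the closed complement. This holds because blowup commutes with \'etale base change and $Bl_ZX\setminus E\simeq X\setminus Z$. If some $X$ only admits such a presentation Zariski-locally with extra components $Y$, these are absorbed exactly by the ``$\sqcup Y$'' allowed in the definition of elementary immersion. Alternatively, I would fall back on deformation to the normal cone $D=Bl_{Z\times0}(X\times\mathbb{A}^1)\setminus Bl_{Z\times 0}(X\times 0)$, which avoids needing \'etale coordinates globally.
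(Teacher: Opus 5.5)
The paper gives no argument of its own here; the lemma is quoted from \cite[Proposition 2.2]{AHI}. Your overall strategy is the standard one: localize, compare $(X,Z)$ with the zero section $(\mathbb{A}^c_Z,Z)$ through \'etale maps that are isomorphisms over $Z$, and transport the vanishing of the total cofiber along Nisnevich squares. Step 2 is correct. It is also the step that genuinely uses stability. There you pass from the base $\mathbb{A}^c_Z$ of a Nisnevich square to its \'etale piece $X$; without stability, descent only lets you go from the \'etale piece to the base.

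The gap is the geometric input of Step 1. The fact you cite is that Zariski-locally $Z$ is cut out by part of an \'etale coordinate system. This gives an \'etale map $\pi=(\pi',\pi''):X\to\mathbb{A}^{n-c}_S\times_S\mathbb{A}^c_S$ with $Z=\pi''^{-1}(0)$. It does not give an \'etale map $X\to\mathbb{A}^c_Z$ restricting to the zero section on $Z$.

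Such a map would yield a retraction $X\to Z$, and retractions generally do not exist Zariski-locally. For example, take $Z=\{y^2=x^3-x\}\subset\mathbb{A}^2_{\mathbb{C}}$: a retraction $U\to Z\cap U$ on an open $U\subset\mathbb{A}^2$ would make a genus-one curve unirational, contradicting L\"uroth's theorem. So no Zariski cover, and no ``induction on a finite cover'' by opens, puts you in the situation of Step 2. The \'etale neighbourhood has to be constructed.

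The standard construction goes as follows. Take $X$ separated with coordinates as above, and let $q=\pi'|_Z:Z\to\mathbb{A}^{n-c}_S$, which is \'etale. Set
\[
X':=X\times_{\mathbb{A}^n_S}\bigl(Z\times_S\mathbb{A}^c_S\bigr),
\]
formed using $q\times\mathrm{id}$. Both projections $X\leftarrow X'\to\mathbb{A}^c_Z$ are \'etale. The preimage of $Z$ under the first equals the preimage of the zero section under the second, namely $Z\times_{\mathbb{A}^{n-c}_S}Z$, and in it the diagonal is open and closed. Removing the off-diagonal part gives $X''$ with \'etale maps to $X$ and to $\mathbb{A}^c_Z$; the same closed subscheme of $X''$ maps isomorphically onto $Z$, respectively onto the zero section.

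Applying your excision identification to each of the two maps gives
\[
\operatorname{cofib}_F(X,Z)\simeq\operatorname{cofib}_F(X'',Z)\simeq\operatorname{cofib}_F(\mathbb{A}^c_Z,Z)=0.
\]
Zariski descent then completes the argument, since all finite intersections of coordinate charts again carry coordinates.

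Two minor points. First, the ``$\sqcup Y$'' in the definition of elementary immersions plays no role once you land on $(\mathbb{A}^c_Z,Z)$. Second, the deformation-to-the-normal-cone fallback is not available here. The space $D_ZX$ relates $X$ and $N_ZX$ only through an $\mathbb{A}^1$-family, and nothing in this setting is $\mathbb{A}^1$-invariant; the lemma is used to build $\mathcal{MS}$.
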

We denote, for each scheme $S$, by $\mathcal{P}sh_{\on{Nis},\on{ebu}}(Sm(S))$ the subcategory of the category of spectra-valued presheaves $\mathcal{P}sh(Sm(S))$ spanned by those presheaves satisfying Nisnevich descent and elementary blowup excision. Similar to the $\mathbb{A}^1$-invariant case, this subcategory inherits a symmetric monoidal structure due to formal reasons.

Then we define the stable non-$\mathbb{A}^1$-invariant motivic homotopy category $\mathcal{MS}(S)$ as a formal inversion with respect to $\mathbb{P}^1$. That is,
\[\mathcal{MS}(S):=\mathcal{P}sh_{\on{Nis},\on{ebu}}(Sm(S))[(\mathbb{P}^1)^{-1}].\]
\begin{rmk}
In the construction of $\mathcal{MS}$, we begin from the category of spectra-valued presheaves instead of that of anima-valued presheaves. One of the reasons is that after abandoning the $\mathbb{A}^1$-invariance, $\mathbb{P}^1$ is no longer a tensor product of $\mathbb{S}^1$ and $\mathbb{G}_m$, and we need to stabilize the category by hand.
\end{rmk}
\begin{rmk}
The fact that $(\mathbb{P}^1,\infty)$ is symmetric in the sense of Proposition \ref{tel} also relies on the $\mathbb{A}^1$-invariance. Therefore the category $\mathcal{MS}$ does not admit the classical telescope presentation in general. This is one of the reasons why Annala-Iwasa introduced the notion of c-spectra.
\end{rmk}
Over a qcqs base $S$, $\mathcal{MS}(S)$ is generated under colimits by compact objects of the form $\Sigma^{-n}\Sigma^{\infty}_+X$. This follows from a similar argument of the compact generation of $\mathcal{SH}(S)$.

\subsubsection{$\on{Mot}_{\on{loc}}$}\label{E-linear.NC}
In this section, we recall the category of $\mathcal{E}$-linear localizing motives and the corepresentability of K-theory. This section follows mainly from \cite{BGT} and Section 5 of \cite{HSS}.

We fix throughout this section an $\mathbb{E}_\infty$-algebra $\mathcal{E}$ in $\mathsf{Cat}^{\on{perf}}$ in which every object is dualizable. And then we consider the $\mathcal{E}$-linear categories
\[
\begin{gathered}
\mathsf{Cat}^{\on{perf}}(\mathcal{E}):=\on{Mod}_\mathcal{E}(\mathsf{Cat}^{\on{perf}}),\\
\PrL(\mathcal{E}):=\on{Mod}_{\on{Ind}(\mathcal{E})}(\PrL_{st}).
\end{gathered}\]
\begin{ex}
The category of finite spectra $\on{Sp}^\omega$ is the unit of the symmetric monoidal structure on $\mathsf{Cat}^{\on{perf}}$. Hence the $\on{Sp}^\omega$-linear categories read:
\[
\begin{gathered}
\mathsf{Cat}^{\on{perf}}(\on{Sp}^\omega)=\mathsf{Cat}^{\on{perf}},\\
\PrL(\on{Sp}^\omega)=\PrL.
\end{gathered}\]
In particular, when this is the case, the category of $\mathcal{E}$-linear localizing motives reduces to the classical one in the sense of \cite{BGT}.
\end{ex}
\begin{rmk}
The first commutative diagram in Section \ref{compactibility} remains valid if we replace every category by its $\mathcal{E}$-linear version. However, this relies on our assumption that every object in $\mathcal{E}$ is dualizable. See \cite[Proposition 4.9, Proposition 4.10]{HSS} for the proof.
\end{rmk}
\begin{nota}
Note that $\mathsf{Cat}^{\on{perf}}(\mathcal{E})$ is a compactly generated category by \cite[Corollary 4.25]{BGT}. We denote by
\[
\psi: \mathsf{Cat}^{\on{perf}}(\mathcal{E}) \rightarrow \mathcal{P}sh(\mathsf{Cat}^{\on{perf}}(\mathcal{E})^\omega)
\]the Kan extension of the Yoneda embedding to the category of spectra-valued presheaves\[
\mathsf{Cat}^{\on{perf}}(\mathcal{E})^\omega \xrightarrow{j} Psh(\mathsf{Cat}^{\on{perf}}(\mathcal{E})^\omega) \xrightarrow{\Sigma_{+}^{\infty}} \mathcal{P}sh(\mathsf{Cat}^{\on{perf}}(\mathcal{E})^\omega).\]
\end{nota}
We say that a sequence $\mathcal{A}\xrightarrow{f}\mathcal{B}\xrightarrow{g}\mathcal{C}$ in $\mathsf{Cat}^{\on{perf}}$ is a Karoubi sequence if it is fiber and cofiber. Alternatively, that being said, $f$ is fully faithful and the induced map $\mathcal{B}/\mathcal{A}\rightarrow\mathcal{C}$ is an idempotent completion. And we say that a sequence $\mathcal{A}\xrightarrow{f}\mathcal{B}\xrightarrow{g}\mathcal{C}$ in $\mathsf{Cat}^{\on{perf}}(\mathcal{E})$ is a Karoubi (exact) sequence, if the forgetful functor $\on{U}:\mathsf{Cat}^{\on{perf}}(\mathcal{E})\rightarrow \mathsf{Cat}^{\on{perf}}$ sends the sequence to a Karoubi sequence in $\mathsf{Cat}^{\on{perf}}$.

Let $\mathcal{S}_{\on{loc}}$ be the collection of morphisms in $\mathcal{P}sh(\mathsf{Cat}^{\on{perf}}(\mathcal{E})^\omega)$ of the form
\[
\begin{gathered}
0 \rightarrow \Sigma^n \psi(0), \\
\Sigma^n(\psi(\mathcal{B}) / \psi(\mathcal{A})) \rightarrow \Sigma^n \psi(\mathcal{C}),
\end{gathered}
\]
where $\mathcal{A} \rightarrow \mathcal{B} \rightarrow \mathcal{C}$ is a Karoubi sequence in $\mathsf{Cat}^{\on{perf}}(\mathcal{E})$ and $n \leq 0$. The category of localizing motives $\on{Mot}_{\on{loc}}(\mathcal{E})$ is defined to be the full subcategory of $\mathcal{P}sh(\mathsf{Cat}^{\on{perf}}(\mathcal{E})^\omega)$ spanned by the $\mathcal{S}_{\on{loc}}$-local objects. $\on{Mot}_{\on{loc}}(\mathcal{E})$ is in fact a presentable category:
\begin{lm}(\cite[Proposition 5.15]{HSS})
There is a small set of morphisms $\mathcal{S}'_{\on{loc}}$ generates $\mathcal{S}_{\on{loc}}$ under filtered colimits. As a consequence, $\on{Mot}_{\on{loc}}(\mathcal{E})$ is an exact accessible localization of $\mathcal{P}sh(\mathsf{Cat}^{\on{perf}}(\mathcal{E})^\omega)$. In particular, $\on{Mot}_{\on{loc}}(\mathcal{E})$ is a stable presentable category. 
\end{lm}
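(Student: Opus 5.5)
We want to show three things: there is a small set $\mathcal{S}'_{\on{loc}}\subset\mathcal{S}_{\on{loc}}$ generating $\mathcal{S}_{\on{loc}}$ under filtered colimits; the localization is accessible and exact; and consequently $\on{Mot}_{\on{loc}}(\mathcal{E})$ is stable and presentable. The strategy is to reduce all Karoubi sequences to ones built from compact objects of $\mathsf{Cat}^{\on{perf}}(\mathcal{E})$. First note that $\mathcal{P}sh(\mathsf{Cat}^{\on{perf}}(\mathcal{E})^\omega)$ is stable and presentable. The functor $\psi$ is the left Kan extension of $\Sigma^\infty_+\circ j$ along $\mathsf{Cat}^{\on{perf}}(\mathcal{E})^\omega\hookrightarrow\mathsf{Cat}^{\on{perf}}(\mathcal{E})$, so it preserves filtered colimits. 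Since $\mathsf{Cat}^{\on{perf}}(\mathcal{E})$ is compactly generated, every object is a filtered colimit of compact ones.

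\textbf{Step 1: Karoubi sequences as filtered colimits.} Given a Karoubi sequence $\mathcal{A}\to\mathcal{B}\to\mathcal{C}$, write $\mathcal{B}=\on{colim}_i\mathcal{B}_i$ as a filtered colimit of compact objects. For each $i$, let $\mathcal{A}_i\subset\mathcal{B}_i$ be generated by a compact approximation of $\mathcal{A}\cap\mathcal{B}_i$. More precisely, I would index over pairs $(\mathcal{A}_\alpha\to\mathcal{B}_\beta)$ of compact objects with a map through which $\mathcal{A}\to\mathcal{B}$ factors compatibly, and set $\mathcal{C}_{\alpha\beta}:=\mathcal{B}_\beta/\mathcal{A}_\alpha$ (Verdier quotient, idempotent completed). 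This is compact because quotients of compacts by compacts are finite colimits. Following Blumberg--Gepner--Tabuada's argument in the $\mathcal{E}$-linear setting, one replaces the image by the full subcategory generated, so that $\mathcal{A}_\alpha\to\mathcal{B}_\beta$ is fully faithful. The key points to verify are:
\begin{itemize}
\item the indexing category is filtered;
\item taking colimits recovers $\mathcal{A}\to\mathcal{B}\to\mathcal{C}$, since filtered colimits of Karoubi sequences in $\mathsf{Cat}^{\on{perf}}(\mathcal{E})$ are Karoubi sequences (cofibers commute with colimits, and fully faithfulness is preserved under filtered colimits, computed underlying);
\item the forgetful functor to $\mathsf{Cat}^{\on{perf}}$ preserves filtered colimits, so Karoubi-ness is detected there.
\end{itemize}
This is the main obstacle: keeping fully faithfulness while staying among compact objects. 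Here I would use that a fully faithful functor of compact objects generated by finitely many objects is again compact, as in \cite[Section 4--5]{BGT}, adapting the argument via the dualizability of objects of $\mathcal{E}$.

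\textbf{Step 2: the small set.} Take $\mathcal{S}'_{\on{loc}}$ to be $0\to\Sigma^n\psi(0)$ together with the maps $\Sigma^n(\psi(\mathcal{B})/\psi(\mathcal{A}))\to\Sigma^n\psi(\mathcal{C})$ for Karoubi sequences between compact objects, with $n\le0$. Since $\mathsf{Cat}^{\on{perf}}(\mathcal{E})^\omega$ is essentially small, this is a small set. By Step 1 and the fact that $\psi$, $\Sigma^n$ and cofibers commute with filtered colimits, every map in $\mathcal{S}_{\on{loc}}$ is a filtered colimit of maps in $\mathcal{S}'_{\on{loc}}$.

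\textbf{Step 3: presentability and stability.} An object is $\mathcal{S}_{\on{loc}}$-local if and only if it is $\mathcal{S}'_{\on{loc}}$-local, because mapping out of a filtered colimit of morphisms is a limit of the mapping spectra. By \cite[Proposition 5.5.4.15]{HTT}, localization at the small set $\mathcal{S}'_{\on{loc}}$ is accessible and the local category is presentable. For exactness, the local objects are closed under $\Sigma$ and $\Omega$. Closure under $\Omega$ is clear. Closure under $\Sigma$ uses that the shifts of Karoubi cofiber maps are again in the set, since negative shifts are included and every Karoubi sequence gives local conditions in all degrees via spectral mapping objects. An accessible localization of a stable category closed under (de)suspension is stable and exact, which proves the lemma.
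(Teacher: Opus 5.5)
Steps 2 and 3 are fine. Step 1, however, has a genuine gap, and its goal is actually false. You want every Karoubi sequence to be a filtered colimit of Karoubi sequences whose three terms are all compact. To restore full faithfulness you replace $\mathcal{A}_\alpha$ by the thick closure of its image in $\mathcal{B}_\beta$, and then claim that a thick subcategory of a compact object generated by finitely many objects is again compact. That claim is false.

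Here is a counterexample. Take $\mathcal{E}=\on{perf}_k$ and $\mathcal{B}=\on{perf}_{\mathbb{A}^1_k}$, which is compact because it is free on an object with an endomorphism. Let $\mathcal{A}=\on{perf}_{\{0\}}(\mathbb{A}^1_k)$ be the thick subcategory generated by $M=k[x]/(x)$. Then $\mathcal{A}$ is proper. Its Hochschild homology is the fibre of $\on{HH}(k[x])\to\on{HH}(k[x^{\pm1}])$, which is infinite dimensional. Smooth and proper would force $\on{HH}(\mathcal{A})\in\on{perf}_k$, so $\mathcal{A}$ is not smooth. Since compact objects of $\mathsf{Cat}^{\on{perf}}(\on{perf}_k)$ are smooth, $\mathcal{A}$ is not compact.

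No other choice of compact approximations can rescue Step 1 either. The arrow $(\on{perf}_k\xrightarrow{M}\mathcal{B})$ is compact in $\on{Fun}(\Delta^1,\mathsf{Cat}^{\on{perf}}(\on{perf}_k))$, since both ends are compact. Suppose $(\mathcal{A}\subset\mathcal{B})=\on{colim}_i(\mathcal{A}_i\subset\mathcal{B}_i)$ with every term compact. Then the map from this arrow to $(\mathcal{A}\subset\mathcal{B})$ given by $(M,\on{id}_{\mathcal{B}})$ factors through some stage $i$. This yields $s:\mathcal{B}\to\mathcal{B}_i$ with $p_i s\simeq\on{id}$ and $s(M)\in\mathcal{A}_i$. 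Because $\mathcal{A}_i$ is thick, $s(\mathcal{A})\subset\mathcal{A}_i$. Hence $\mathcal{A}$ is a retract of $\mathcal{A}_i$ and would be compact, a contradiction. So the sequence $\on{perf}_{\{0\}}(\mathbb{A}^1)\to\on{perf}_{\mathbb{A}^1}\to\on{perf}_{\mathbb{G}_m}$ is not such a colimit, and your $\mathcal{S}'_{\on{loc}}$ has not been shown to generate $\mathcal{S}_{\on{loc}}$.

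The fix is to drop compactness of $\mathcal{A}$ and $\mathcal{C}$. Only smallness of the generating set matters.
\begin{itemize}
\item Write $\mathcal{B}=\on{colim}_i\mathcal{B}_i$ as a filtered colimit of compact objects.
\item Let $\mathcal{A}_i:=\mathcal{B}_i\times_{\mathcal{B}}\mathcal{A}$ be the full subcategory of objects whose image lies in $\mathcal{A}$. It is thick, $\mathcal{E}$-stable and functorial in $i$.
\item Set $\mathcal{C}_i:=\mathcal{B}_i/\mathcal{A}_i$.
\end{itemize}
Filtered colimits preserve full faithfulness, every object of $\mathcal{A}$ lifts to some $\mathcal{A}_i$, and cofibres commute with colimits. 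So the colimit of these Karoubi sequences is $\mathcal{A}\to\mathcal{B}\to\mathcal{C}$.

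Now take $\mathcal{S}'_{\on{loc}}$ to consist of the maps attached to Karoubi sequences with compact middle term. This set is small, because $\mathsf{Cat}^{\on{perf}}(\mathcal{E})^\omega$ is essentially small and a small category has only a set of full subcategories. It does not matter that $\psi(\mathcal{A}_i)$ need not be representable. With this $\mathcal{S}'_{\on{loc}}$, your Steps 2 and 3 go through unchanged. This is the standard argument; the paper gives no proof of its own and simply cites \cite[Proposition 5.15]{HSS}.
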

We denote by $\mathcal{U}$ the composition\[
\mathcal{U}: \mathsf{Cat}^{\on{perf}}(\mathcal{E}) \xrightarrow{\psi} \mathcal{P}sh(\mathsf{Cat}^{\on{perf}}(\mathcal{E})^\omega)\xrightarrow {\on{L}} \on{Mot}_{\on{loc}}(\mathcal{E}).
\]
\begin{defn}(Localizing invariant)
Let $\mathcal{D}$ be a stable presentable category and let $F: \mathsf{Cat}^{\on{perf}}(\mathcal{E})\rightarrow \mathcal{D}$ be a functor. We say that $F$ is a localizing invariant if the following conditions are satisfied:
\begin{enumerate}
\item $F$ preserves filtered colimits.
\item $F$ preserves zero objects.
\item $F$ sends Karoubi sequences in $\mathsf{Cat}^{\on{perf}}(\mathcal{E})$ to cofiber sequences in $\mathcal{D}$.
\end{enumerate}
We denote by $\on{Fun}_{\on{loc}}(\mathsf{Cat}^{\on{perf}}(\mathcal{E}), \mathcal{D})$ the category of localizing invariants with values in $\mathcal{D}$.
\end{defn}
By construction, $\on{Mot}_{\on{loc}}(\mathcal{E})$ is equipped with a symmetric monoidal structure compatible with $\mathsf{Cat}^{\on{perf}}(\mathcal{E})$. Now we give a characterization of $\mathcal{U}$ as the universal localizing invariant and the universal symmetric monoidal localizing invariant which follows directly from the construction.
\begin{lm}(\cite[Theorem 5.17]{HSS})
The functor $\mathcal{U}:\mathsf{Cat}^{\on{perf}}(\mathcal{E})\rightarrow \on{Mot}_{\on{loc}}(\mathcal{E})$ is the universal localizing invariant in the sense that for every presentable stable category $\mathcal{D}$, the functor $\mathcal{U}$ induces an equivalence\[
\on{Fun}^{\on{L}}(\on{Mot}_{\on{loc}}(\mathcal{E}),\mathcal{D})\simeq \on{Fun}_{\on{loc}}(\mathsf{Cat}^{\on{perf}}(\mathcal{E}),\mathcal{D}).
\]
\end{lm}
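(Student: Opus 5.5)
The statement to prove is the universal property of $\mathcal{U}$ (\cite[Theorem 5.17]{HSS}). I would prove it by combining the universal property of spectral presheaves with the universal property of Bousfield localization.

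\textbf{Step 1 (presheaves).} Since $\mathcal{P}sh(\mathsf{Cat}^{\on{perf}}(\mathcal{E})^\omega)$ is the free stable presentable category on the small category $\mathsf{Cat}^{\on{perf}}(\mathcal{E})^\omega$, restriction along $\Sigma^\infty_+\circ j$ gives an equivalence
\[\on{Fun}^{\on{L}}(\mathcal{P}sh(\mathsf{Cat}^{\on{perf}}(\mathcal{E})^\omega),\mathcal{D})\simeq\on{Fun}(\mathsf{Cat}^{\on{perf}}(\mathcal{E})^\omega,\mathcal{D}).\]
Because $\mathsf{Cat}^{\on{perf}}(\mathcal{E})$ is compactly generated (\cite[Corollary 4.25]{BGT}), it equals $\on{Ind}$ of its compact objects. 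So functors out of $\mathsf{Cat}^{\on{perf}}(\mathcal{E})^\omega$ correspond, by left Kan extension, to filtered-colimit-preserving functors $\mathsf{Cat}^{\on{perf}}(\mathcal{E})\rightarrow\mathcal{D}$. The functor $\psi$ is exactly this Kan extension of the Yoneda embedding. Hence, for any colimit-preserving $G$ on presheaves, $G\circ\psi$ is the Kan extension of $G\circ\Sigma^\infty_+ j$. This identifies $\on{Fun}^{\on{L}}(\mathcal{P}sh,\mathcal{D})$ with the category of filtered-colimit-preserving functors $\mathsf{Cat}^{\on{perf}}(\mathcal{E})\rightarrow\mathcal{D}$.

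\textbf{Step 2 (localization).} By the lemma above (\cite[Proposition 5.15]{HSS}), $\on{Mot}_{\on{loc}}(\mathcal{E})$ is the accessible localization at the small generating set $\mathcal{S}'_{\on{loc}}$. By \cite[Proposition 5.5.4.20]{HTT}, precomposition with $\on{L}$ identifies $\on{Fun}^{\on{L}}(\on{Mot}_{\on{loc}}(\mathcal{E}),\mathcal{D})$ with the colimit-preserving functors on presheaves that invert $\mathcal{S}'_{\on{loc}}$. A colimit-preserving functor inverts a morphism if and only if it inverts all its filtered colimits, so this is the same as inverting all of $\mathcal{S}_{\on{loc}}$.

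\textbf{Step 3 (translating the conditions).} Under Step 1, a colimit-preserving $G$ corresponds to $F=G\circ\psi$. Inverting $0\rightarrow\Sigma^n\psi(0)$ means exactly $F(0)\simeq0$; the shifts $n\le 0$ are automatic because $G$ is exact. Inverting $\psi(\mathcal{B})/\psi(\mathcal{A})\rightarrow\psi(\mathcal{C})$ means that $F(\mathcal{A})\rightarrow F(\mathcal{B})\rightarrow F(\mathcal{C})$ is a cofiber sequence in $\mathcal{D}$. This uses that $G$ preserves cofibers and that the composite $\mathcal{A}\rightarrow\mathcal{C}$ is null, since it factors through $0$ and $F(0)=0$.

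\textbf{Main obstacle.} The hard part is Steps 2--3 when $\mathcal{A},\mathcal{B},\mathcal{C}$ are not compact. In that case $\psi(\mathcal{C})$ is only a filtered colimit of representables, so one must check that the generating set is small and that general Karoubi sequences are filtered colimits of Karoubi sequences between compact objects. This is precisely the content of \cite[Proposition 5.15]{HSS}, which we may assume. Once it is granted, the two equivalences compose to the claimed equivalence induced by $\mathcal{U}=\on{L}\circ\psi$.
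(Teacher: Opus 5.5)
Your proposal is correct and takes the same approach as the paper: it spells out what the paper means by ``follows directly from the construction'', namely the free-presheaf universal property (transported along $\psi$ to filtered-colimit-preserving functors on $\mathsf{Cat}^{\on{perf}}(\mathcal{E})$), then Bousfield localization at the small set provided by \cite[Proposition 5.15]{HSS}, then the translation of inverting $\mathcal{S}_{\on{loc}}$ into the localizing-invariant conditions. The paper gives no further argument beyond this remark and the citation of \cite[Theorem 5.17]{HSS}.
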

\begin{lm}(\cite[Theorem 5.18]{HSS})\label{u.loc.inv.}
The symmetric monoidal functor $\mathcal{U}:\mathsf{Cat}^{\on{perf}}(\mathcal{E})\rightarrow \on{Mot}_{\on{loc}}(\mathcal{E})$ is the universal symmetric monoidal localizing invariant in the sense that for every presentably symmetric monoidal stable category $\mathcal{D}$, the functor $\mathcal{U}$ induces an equivalence\[
\on{Fun}^{\on{L},\otimes}(\on{Mot}_{\on{loc}}(\mathcal{E}),\mathcal{D})\simeq \on{Fun}^{\otimes}_{\on{loc}}(\mathsf{Cat}^{\on{perf}}(\mathcal{E}),\mathcal{D}).
\]
\end{lm}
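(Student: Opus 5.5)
The plan is to assemble the statement from three universal properties: the Ind-completion of the compact objects, the spectral presheaf category with Day convolution, and the symmetric monoidal localization. The non-monoidal statement (\cite[Theorem 5.17]{HSS}, recalled just above) then identifies the essential image.

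\textbf{Step 1: reduce to compact objects.} Since $\mathsf{Cat}^{\on{perf}}(\mathcal{E})$ is compactly generated (\cite[Corollary 4.25]{BGT}), I write $\mathsf{Cat}^{\on{perf}}(\mathcal{E})\simeq\on{Ind}(\mathsf{Cat}^{\on{perf}}(\mathcal{E})^\omega)$. I first check that $\mathsf{Cat}^{\on{perf}}(\mathcal{E})^\omega$ is a symmetric monoidal subcategory. The unit $\mathcal{E}$ is compact, and the relative tensor product of two compact $\mathcal{E}$-linear categories is again compact, because $\mathcal{A}\otimes_{\mathcal{E}}-$ preserves filtered colimits. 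It follows that a symmetric monoidal functor $\mathsf{Cat}^{\on{perf}}(\mathcal{E})\to\mathcal{D}$ preserving filtered colimits is the same as a symmetric monoidal functor on $\mathsf{Cat}^{\on{perf}}(\mathcal{E})^\omega$. The reason is that the Ind-extension is canonically symmetric monoidal: filtered colimits commute with $\otimes$ on both sides, since $\mathcal{D}$ is presentably symmetric monoidal.

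\textbf{Step 2: presheaves with Day convolution.} By the universal property of $\mathcal{P}sh(\mathcal{C})$ with Day convolution as the free presentably symmetric monoidal stable category on a small symmetric monoidal $\mathcal{C}$ (Lurie, Robalo), the functor $\psi$ restricted to compacts induces
\[
\on{Fun}^{L,\otimes}(\mathcal{P}sh(\mathsf{Cat}^{\on{perf}}(\mathcal{E})^\omega),\mathcal{D})\simeq\on{Fun}^{\otimes}(\mathsf{Cat}^{\on{perf}}(\mathcal{E})^\omega,\mathcal{D}).
\]
This stable version follows from the unstable one after tensoring with $\on{Sp}$ in $\PrL$.

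\textbf{Step 3: the localization is symmetric monoidal.} This is the step I expect to be the main obstacle. I must show that the class $\mathcal{S}_{\on{loc}}$ (or its small generating set $\mathcal{S}'_{\on{loc}}$) is stable under $\psi(\mathcal{K})\otimes-$ for compact $\mathcal{K}$. Since the $\psi(\mathcal{K})$ generate, this makes $\on{L}$ compatible with Day convolution. Concretely:
\begin{itemize}
\item $\psi(\mathcal{K})\otimes\psi(\mathcal{A})\simeq\psi(\mathcal{K}\otimes_{\mathcal{E}}\mathcal{A})$, and $\psi$ preserves $0$ up to this identification.
\item If $\mathcal{A}\to\mathcal{B}\to\mathcal{C}$ is a Karoubi sequence, then so is $\mathcal{K}\otimes_{\mathcal{E}}\mathcal{A}\to\mathcal{K}\otimes_{\mathcal{E}}\mathcal{B}\to\mathcal{K}\otimes_{\mathcal{E}}\mathcal{C}$.
\end{itemize}
For the second point I would pass to Ind-completions. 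Karoubi sequences correspond to short exact sequences of compactly generated categories, with fully faithful strongly left adjoint first map and second map the cofiber. Tensoring in $\PrL_{st}$ over $\on{Ind}(\mathcal{E})$ with the dualizable category $\on{Ind}(\mathcal{K})$ preserves both fully faithfulness (a retract-of-Ind argument, or dualizability) and cofibers (since $\otimes$ preserves colimits). For this I need the $\mathcal{E}$-linear compatibility remark, which relies on dualizability of objects of $\mathcal{E}$. Given compatibility, $\on{Mot}_{\on{loc}}(\mathcal{E})$ acquires a symmetric monoidal structure making $\on{L}$ symmetric monoidal. Moreover, symmetric monoidal left adjoints out of $\on{Mot}_{\on{loc}}(\mathcal{E})$ are exactly the symmetric monoidal left adjoints out of $\mathcal{P}sh$ that invert $\mathcal{S}_{\on{loc}}$; this is Lurie's symmetric monoidal localization, HA 2.2.1.

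\textbf{Step 4: combine.} The equivalences of Steps 1–3 give a fully faithful functor
\[
\on{Fun}^{L,\otimes}(\on{Mot}_{\on{loc}}(\mathcal{E}),\mathcal{D})\hookrightarrow\on{Fun}^{\otimes}(\mathsf{Cat}^{\on{perf}}(\mathcal{E}),\mathcal{D}).
\]
Its essential image consists of the filtered-colimit-preserving symmetric monoidal functors whose underlying functor inverts $\mathcal{S}_{\on{loc}}$. Being a localizing invariant is a property of the underlying functor, and by \cite[Theorem 5.17]{HSS} this condition is exactly inverting $\mathcal{S}_{\on{loc}}$. Hence the image is $\on{Fun}^{\otimes}_{\on{loc}}(\mathsf{Cat}^{\on{perf}}(\mathcal{E}),\mathcal{D})$, as claimed.
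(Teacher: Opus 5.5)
Your proposal is correct and is essentially the construction-based argument that the paper does not spell out but delegates to \cite[Theorem 5.18]{HSS}. That argument runs through Day convolution on $\mathcal{P}sh(\mathsf{Cat}^{\on{perf}}(\mathcal{E})^\omega)$, then compatibility of the $\mathcal{S}_{\on{loc}}$-localization with it because $\mathcal{K}\otimes_{\mathcal{E}}-$ preserves Karoubi sequences (using dualizability of the objects of $\mathcal{E}$), then the non-monoidal Theorem 5.17 to identify the essential image. The only slip is the justification in Step 1: compactness of $\mathcal{A}\otimes_{\mathcal{E}}\mathcal{B}$ does not follow from $\mathcal{A}\otimes_{\mathcal{E}}-$ preserving filtered colimits, since that holds for every $\mathcal{A}$; instead, check it on the generators $\mathcal{E}\otimes\mathcal{K}_0$ with $\mathcal{K}_0$ compact in $\mathsf{Cat}^{\on{perf}}$ (whose compacts are closed under $\otimes$), and then use that $\otimes_{\mathcal{E}}$ preserves finite colimits and retracts in each variable.
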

We will use the following important theorem, known as the corepresentability of nonconnective K-theory. Before stating the theorem, we mention that this also relies heavily on the assumption that $\mathcal{E}$ consists of dualizable objects. For $\mathcal{E}=\on{Sp}^\omega$, the assumption is satisfied and this reduces to the classical case in \cite{BGT}.
\begin{defn}
Let $\mathcal{E}$ be an $\mathbb{E}_\infty$-algebra on $\mathsf{Cat}^{\on{perf}}$ consisting of dualizable objects. Let $\mathcal{A}$ be an $\mathcal{E}$-linear small, stable, idempotent complete category. We say $\mathcal{A}$ is smooth (resp. proper) if $\on{Ind}(\mathcal{A})$ is smooth (resp. proper) as a compactly generated stable category.
\end{defn}
\begin{prop}(\cite[Theorem 5.25]{HSS})
Let $\mathcal{A}$ and $\mathcal{B}$ be small, stable, idempotent complete categories and assume that $\mathcal{B}$ is smooth and proper. Then there is an equivalent of spectra:
\[\on{Mot}_{\on{loc}}(\mathcal{E})(\mathcal{U}(\mathcal{B}),\mathcal{U}(\mathcal{A}))\simeq \on{K}(\mathcal{B}^{op}\otimes_{\mathcal{E}} \mathcal{A}),
\]
where $\on{K}(-)$ is the nonconnective K-theory spectrum.
\end{prop}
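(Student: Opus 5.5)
The plan is to reduce to the case $\mathcal{E}=\on{Sp}^\omega$ treated in \cite{BGT}, running the argument internally to $\mathcal{E}$-linear categories. The key input is that $\mathcal{B}$ is smooth and proper, so $\mathcal{B}$ is dualizable in $\mathsf{Cat}^{\on{perf}}(\mathcal{E})$ with dual $\mathcal{B}^{op}$. Hence $\mathcal{B}^{op}\otimes_{\mathcal{E}}-$ is both left and right adjoint to $\mathcal{B}\otimes_{\mathcal{E}}-$, and it preserves filtered colimits and Karoubi sequences. Since $\mathcal{U}$ is symmetric monoidal, $\mathcal{U}(\mathcal{B})$ is then dualizable in $\on{Mot}_{\on{loc}}(\mathcal{E})$ with dual $\mathcal{U}(\mathcal{B}^{op})$. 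This gives
\[\on{Mot}_{\on{loc}}(\mathcal{E})(\mathcal{U}(\mathcal{B}),\mathcal{U}(\mathcal{A}))\simeq\on{Mot}_{\on{loc}}(\mathcal{E})(\mathcal{U}(\mathcal{E}),\mathcal{U}(\mathcal{B}^{op}\otimes_{\mathcal{E}}\mathcal{A})),\]
so it suffices to prove corepresentability for the unit: $\on{Mot}_{\on{loc}}(\mathcal{E})(\mathcal{U}(\mathcal{E}),\mathcal{U}(\mathcal{C}))\simeq\on{K}(\mathcal{C})$ for all $\mathcal{C}$.

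For the unit case I would follow the BGT strategy.
\begin{enumerate}
\item Observe that $\mathcal{E}$ is compact in $\mathsf{Cat}^{\on{perf}}(\mathcal{E})$, since it corepresents the groupoid core of the underlying category. In $\mathcal{P}sh(\mathsf{Cat}^{\on{perf}}(\mathcal{E})^\omega)$, maps out of $\psi(\mathcal{E})$ are therefore evaluation at $\mathcal{E}$. Stably, this gives $\Sigma^\infty_+\iota(\mathcal{C})$ on compact $\mathcal{C}$.
\item Show that the localization $\on{L}$ with respect to $\mathcal{S}_{\on{loc}}$ can be computed, on $\psi(\mathcal{C})$ evaluated at $\mathcal{E}$, by first group-completing via additivity. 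This is an $S_\bullet$ or Waldhausen construction, which turns $\Sigma^\infty_+\iota$ into connective K-theory. One then enforces localization using Bass--Schlichting delooping to pass to nonconnective $\on{K}$.
\item Concretely, verify that $\mathcal{C}\mapsto\on{K}(\mathcal{C})$, with $\on{K}$ applied to the underlying category, is an $\mathcal{E}$-linear localizing invariant. This holds because Karoubi sequences are detected by the forgetful functor. Then show that the natural map $\psi(\mathcal{C})\to\on{K}(-\otimes_{\mathcal{E}}\mathcal{C})$, taken on compact objects and evaluated at $\mathcal{E}$, is an $\mathcal{S}_{\on{loc}}$-equivalence whose target is local after the appropriate colimit.
\end{enumerate}

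The main obstacle is exactly that last step: proving the co-unit comparison is a local equivalence, i.e.\ the BGT argument that the fiber of $\Sigma^\infty_+\iota\to\on{K}$ is killed by localization. One uses the additivity theorem to realize the $S_\bullet$ construction by iterated split Karoubi sequences, and suspends via cone/Calkin-type sequences $\mathcal{C}\to\mathcal{C}'\to\Sigma\mathcal{C}$ with $\on{K}(\mathcal{C}')=0$. These constructions must be checked to be $\mathcal{E}$-linear. That is where dualizability of objects of $\mathcal{E}$ is used: it ensures $\on{Ind}$ and Calkin constructions stay in $\mathcal{E}$-modules, as in \cite[Proposition 4.9, 4.10]{HSS}. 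I would follow \cite[Section 5]{HSS} there, and the case of general noncompact $\mathcal{C}$ follows since both sides commute with filtered colimits.
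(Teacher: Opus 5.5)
The paper gives no argument of its own here; it simply quotes \cite[Theorem 5.25]{HSS}. Your first step is correct, and it is exactly where the hypothesis enters. Since $\mathcal{B}$ is smooth and proper, it is dualizable in $\mathsf{Cat}^{\on{perf}}(\mathcal{E})$ with dual $\mathcal{B}^{op}$. Since $\mathcal{U}$ is symmetric monoidal, you get $\on{Map}(\mathcal{U}(\mathcal{B}),\mathcal{U}(\mathcal{A}))\simeq\on{Map}(\mathcal{U}(\mathcal{E}),\mathcal{U}(\mathcal{B}^{op}\otimes_{\mathcal{E}}\mathcal{A}))$.

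The problem is step 3 of your unit case, which is set up in the wrong variable. Being $\mathcal{S}_{\on{loc}}$-local, or being an $\mathcal{S}_{\on{loc}}$-equivalence, is a condition on presheaves over all of $\mathsf{Cat}^{\on{perf}}(\mathcal{E})^\omega$. It cannot be checked ``evaluated at $\mathcal{E}$''; you would have to identify the whole presheaf $\on{L}\psi(\mathcal{C})$. Moreover, $\mathcal{D}\mapsto\on{K}(\mathcal{D}\otimes_{\mathcal{E}}\mathcal{C})$ is covariant in $\mathcal{D}$, so it is not a presheaf at all. The presheaf that group-completing $\psi(\mathcal{C})(\mathcal{D})=\Sigma^\infty_+\iota\on{Fun}_{\mathcal{E}}(\mathcal{D},\mathcal{C})$ actually produces is $\mathcal{D}\mapsto\on{K}(\on{Fun}_{\mathcal{E}}(\mathcal{D},\mathcal{C}))$. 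This presheaf is not $\mathcal{S}_{\on{loc}}$-local: $\on{Fun}_{\mathcal{E}}(-,\mathcal{C})$ carries split exact sequences to split exact sequences, but not Karoubi sequences to Karoubi sequences.

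This is a real obstruction, not a technicality. Work over $\mathcal{E}=\on{perf}_k$ with $k$ and $R$ countable. Efimov's theorem, quoted in Section~\ref{comparison.MS}, says that the value of $\on{L}\psi(\on{perf}_R)$ at the compact object $\on{perf}_{k[t]}$ has $\pi_0\cong\mathbb{W}(R)$, which is uncountable. By contrast, $\on{K}_0(\on{Fun}(\on{perf}_{k[t]},\on{perf}_R))$, the $\on{K}_0$ of perfect $R$-modules with an endomorphism, is countable. So you cannot identify $\on{L}\psi(\mathcal{C})$ with this K-theory presheaf and then specialize to $\mathcal{D}=\mathcal{E}$. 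This failure away from smooth and proper sources is precisely why the statement carries that hypothesis.

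What you need instead is the covariant statement your last paragraph alludes to (``the fiber of $\Sigma^\infty_+\iota\to\on{K}$ is killed by localization''). Grant, as you do at the end, that $\mathcal{U}(\mathcal{E})$ is compact (the localization is finitary, cf.\ \cite[Proposition 5.15]{HSS}). Then $F_0:=\on{Map}(\mathcal{U}(\mathcal{E}),\mathcal{U}(-))$ is a localizing invariant. For any localizing invariant $F$, the universal property of $\mathcal{U}$ together with the spectral Yoneda lemma gives $\on{Nat}(F_0,F)\simeq\Omega^\infty F(\mathcal{E})\simeq\on{Nat}(\Sigma^\infty_+\iota,F)$. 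Hence the unit case is equivalent to $\Sigma^\infty_+\iota\to\on{K}$ being the reflection of $\Sigma^\infty_+\iota$ into localizing invariants. That is what your two ingredients prove. First, the $S_\bullet$ argument shows the additive reflection is connective K-theory. Second, Calkin delooping via $\mathcal{C}\to\mathcal{F}\mathcal{C}\to\mathcal{Q}\mathcal{C}$, with $\mathcal{F}\mathcal{C}$ flasque, gives $F\simeq\on{colim}_n\Omega^nF(\mathcal{Q}^n-)$ for localizing $F$.

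You can also avoid checking that these constructions are $\mathcal{E}$-linear. The functors $\mathcal{E}\otimes-$ and the forgetful functor between $\mathsf{Cat}^{\on{perf}}$ and $\mathsf{Cat}^{\on{perf}}(\mathcal{E})$ both preserve filtered colimits and Karoubi sequences, so their adjunction descends to motives. This gives $\on{Map}(\mathcal{U}(\mathcal{E}),\mathcal{U}(\mathcal{C}))\simeq\on{Map}(\mathcal{U}(\on{Sp}^\omega),\mathcal{U}(\mathcal{C}))$, which reduces you to the case $\mathcal{E}=\on{Sp}^\omega$ of \cite{BGT}.
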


\section{Construction of Comparison Functors of Motivic Homotopy Theories}\label{section:pres}
\subsection{Comparison between $\mathcal{SH}^{\vee}$ and $\on{Mot}_{\on{loc}}^{\mathbb{A}^1}$}
In this section, we fix a qcqs base scheme $S$.

Recall that for a $\mathbb{E}_\infty$-algebra $\mathcal{E}$ in which every object is dualizble, the $\mathcal{E}$-linear category of localizing motives $\on{Mot}_{\on{loc}}(\mathcal{E})$ is defined as the $\mathcal{S}_{\on{loc}}$-localization of $\mathcal{P}sh(\mathsf{Cat}^{\on{perf}}(\mathcal{E})^\omega)$. For our purpose of comparing localizing motives and $\mathcal{SH}$, it is reasonable to further localize $\on{Mot}_{\on{loc}}$ to get a category of $\mathbb{A}^1$-invariant localizing motives. And in the new category we have the corepresentability of $\mathbb{A}^1$-homotopy K-theory.

Note first that $\on{perf}_S$ consists of dualizable objects, since perfect complexes are precisely the dualizable objects in the category of quasi-coherent sheaves, as discussed in \cite{TT}, or Theorem 8.3 of \cite{Sch}. So we set $\mathcal{E}=\on{perf}_S$ as in Section \ref{E-linear.NC} and work $\on{perf}_S$-linearly. We begin by considering the natural functor\[
\mathcal{U}\circ \on{perf} : Sm^{\on{fp}}(S)^{op}\rightarrow \mathsf{Cat}^{\on{perf}}(\on{perf}_S)\rightarrow\on{Mot}_{\on{loc}}(\on{perf}_S).\]

For each $\mathcal{C}\in\mathsf{Cat}^{\on{perf}}(\on{perf}_S)^\omega$, we add to $\mathcal{S}_{\on{loc}}$ morphisms of the form\[
\psi(\mathcal{C})\rightarrow\psi(\on{perf}_{\mathbb{A}^1}\otimes\mathcal{C}),\]
and we denote the larger collection of morphisms by $\mathcal{S}_{\on{loc}}^{\mathbb{A}^1}$. Since only a small set of morphisms are added, the localization with respect to $\mathcal{S}_{\on{loc}}^{\mathbb{A}^1}$ exists and is still presentable. We denote simply by $\on{Mot}(S)_{\on{loc}}^{\mathbb{A}^1}$ the further localized $\mathcal{U}(\on{perf}_S)$-linear category $\on{Mot}_{\on{loc}}^{\mathbb{A}^1}(\on{perf}_S)$. It is immediate from Lemma \ref{u.loc.inv.} and the construction that $\on{Mot}(S)_{\on{loc}}^{\mathbb{A}^1}$ is the category of $\mathbb{A}^1$-invariant localizing motives:
\begin{lm}
The symmetric monoidal functor $\on{L}_{\mathbb{A}^1}\circ\mathcal{U}:\mathsf{Cat}^{\on{perf}}(\on{perf}_S)\rightarrow \on{Mot}(S)_{\on{loc}}^{\mathbb{A}^1}$ is the universal $\mathbb{A}^1$-invariant symmetric monoidal localizing invariant in the sense that for every presentably symmetric monoidal stable category $\mathcal{D}$, the functor $\on{L}_{\mathbb{A}^1}\circ\mathcal{U}$ induces an equivalence\[
\on{Fun}^{\on{L},\otimes}(\on{Mot}(S)_{\on{loc}}^{\mathbb{A}^1},\mathcal{D})\simeq \on{Fun}^\otimes_{\on{loc},\mathbb{A}^1}(\mathsf{Cat}^{\on{perf}}(\on{perf}_S),\mathcal{D}).
\]
In the following, we denote this universal $\mathbb{A}^1$-invariant localizing invariant simply by $\mathcal{U}$.
\end{lm}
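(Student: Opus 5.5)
The plan is to combine the universal property of $\mathcal{U}$ from Lemma \ref{u.loc.inv.} with the universal property of a Bousfield localization at a small set of morphisms. Write $\on{L}_{\mathbb{A}^1}:\on{Mot}_{\on{loc}}(\on{perf}_S)\rightarrow\on{Mot}(S)_{\on{loc}}^{\mathbb{A}^1}$ for the further localization at the images of the added morphisms $\psi(\mathcal{C})\rightarrow\psi(\on{perf}_{\mathbb{A}^1}\otimes\mathcal{C})$. Then the claim splits into two steps:
\[\on{Fun}^{\on{L},\otimes}(\on{Mot}(S)_{\on{loc}}^{\mathbb{A}^1},\mathcal{D})\hookrightarrow\on{Fun}^{\on{L},\otimes}(\on{Mot}_{\on{loc}}(\on{perf}_S),\mathcal{D})\simeq\on{Fun}^\otimes_{\on{loc}}(\mathsf{Cat}^{\on{perf}}(\on{perf}_S),\mathcal{D}).\]
The first functor is fully faithful, and we must identify its essential image with the $\mathbb{A}^1$-invariant localizing invariants.

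The first step is to show that $\on{L}_{\mathbb{A}^1}$ is a symmetric monoidal localization. For this it suffices that the class of $\mathbb{A}^1$-equivalences is stable under tensoring with generators $\mathcal{U}(\mathcal{C}')$, $\mathcal{C}'$ compact. Tensoring $\mathcal{U}(\mathcal{C})\rightarrow\mathcal{U}(\on{perf}_{\mathbb{A}^1}\otimes\mathcal{C})$ with $\mathcal{U}(\mathcal{C}')$ gives $\mathcal{U}(\mathcal{C}\otimes\mathcal{C}')\rightarrow\mathcal{U}(\on{perf}_{\mathbb{A}^1}\otimes\mathcal{C}\otimes\mathcal{C}')$. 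This is again of the same form, because $\mathcal{U}$ is symmetric monoidal and compact objects of $\mathsf{Cat}^{\on{perf}}(\on{perf}_S)$ are closed under $\otimes$ (all tensors being over $\on{perf}_S$). Since the tensor product preserves colimits in each variable, the strongly saturated class generated by these maps is a tensor ideal. Lurie's criterion (HA 2.2.1.9 together with 4.1.7.4) then gives a symmetric monoidal structure on $\on{Mot}(S)_{\on{loc}}^{\mathbb{A}^1}$ for which $\on{L}_{\mathbb{A}^1}$ is symmetric monoidal. Moreover, restriction along $\on{L}_{\mathbb{A}^1}$ identifies $\on{Fun}^{\on{L},\otimes}(\on{Mot}(S)_{\on{loc}}^{\mathbb{A}^1},\mathcal{D})$ with the full subcategory of symmetric monoidal colimit-preserving functors inverting the added maps. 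Stability and presentability are inherited, since we localize at a small set in a stable category and may close the set under $\Sigma^n$.

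The second step identifies the essential image. Under Lemma \ref{u.loc.inv.}, a functor $G$ corresponds to $F=G\circ\mathcal{U}$, and $G$ inverts $\mathcal{U}(\mathcal{C})\rightarrow\mathcal{U}(\on{perf}_{\mathbb{A}^1}\otimes\mathcal{C})$ for all compact $\mathcal{C}$ iff $F(\mathcal{C})\simeq F(\on{perf}_{\mathbb{A}^1}\otimes\mathcal{C})$ for compact $\mathcal{C}$. Since $F$ preserves filtered colimits, every object of $\mathsf{Cat}^{\on{perf}}(\on{perf}_S)$ is a filtered colimit of compacts, and $\on{perf}_{\mathbb{A}^1}\otimes-$ commutes with filtered colimits, this extends to all $\mathcal{C}$. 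That is exactly the definition of an $\mathbb{A}^1$-invariant localizing invariant. Hence the essential image is $\on{Fun}^\otimes_{\on{loc},\mathbb{A}^1}$, and $\on{L}_{\mathbb{A}^1}\circ\mathcal{U}$ is itself such a functor.

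The main obstacle is the monoidal compatibility in the first step, namely checking that $\on{perf}_{\mathbb{A}^1}\otimes\mathcal{C}$ lies in the prescribed generating set after tensoring. I expect this to be routine once it is noted that $\on{perf}_{\mathbb{A}^1}$ is compact (it is smooth and proper over $\on{perf}_S$ only in a weak sense, but compactness in $\mathsf{Cat}^{\on{perf}}(\on{perf}_S)$ follows from finite presentation). Even without that, the tensor ideal property can be checked on all generators $\psi(\mathcal{C}')$ directly, since the class of added maps is indexed by all compact $\mathcal{C}$.
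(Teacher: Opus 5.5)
Your proposal is correct and follows the same route as the paper, which simply declares the lemma immediate from the universal property of $\mathcal{U}$ (Lemma \ref{u.loc.inv.}) together with the construction of $\on{Mot}(S)_{\on{loc}}^{\mathbb{A}^1}$ as a further localization. Your tensor-ideal check and the filtered-colimit extension of $\mathbb{A}^1$-invariance from compact $\mathcal{C}$ to all $\mathcal{C}$ supply exactly the details the paper leaves implicit. The worry in your final paragraph is unnecessary. The generating maps are indexed by compact $\mathcal{C}$, so only $\mathcal{C}\otimes\mathcal{C}'$ needs to be compact, which you already verified; $\on{perf}_{\mathbb{A}^1}\otimes\mathcal{C}$ itself need not be. Also, $\on{perf}_{\mathbb{A}^1}$ is smooth but not proper over $\on{perf}_S$.
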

We have the desired corepresentability of homotopy K-theory:
\begin{prop}\label{KH}
Let $\mathcal{A}$ and $\mathcal{B}$ be $\on{perf}_S$-linear small, stable, idempotent complete categories and assume that $\mathcal{B}$ is smooth and proper. Then there is an equivalence of spectra:
\[\on{Mot}(S)_{\on{loc}}^{\mathbb{A}^1}(\mathcal{U}(\mathcal{B}),\mathcal{U}(\mathcal{A}))\simeq \on{KH}(\mathcal{B}^{op}\otimes_{\on{perf}_S} \mathcal{A}),
\]
where $\on{KH}(-)$ is the $\mathbb{A}^1$-invariant homotopy K-theory spectrum.
\end{prop}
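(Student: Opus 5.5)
The plan is to reduce to the non-$\mathbb{A}^1$-invariant corepresentability of \cite[Theorem 5.25]{HSS} and then compute the $\mathbb{A}^1$-localization explicitly. Write $\on{L}_{\mathbb{A}^1}:\on{Mot}_{\on{loc}}(\on{perf}_S)\rightarrow\on{Mot}(S)^{\mathbb{A}^1}_{\on{loc}}$ for the localization, whose local objects are those $M$ with $\on{Map}(\mathcal{U}(\on{perf}_{\mathbb{A}^1}\otimes\mathcal{C}),M)\simeq\on{Map}(\mathcal{U}(\mathcal{C}),M)$ for all compact $\mathcal{C}$ (shifts are automatic by stability). Then
\[\on{Mot}(S)^{\mathbb{A}^1}_{\on{loc}}(\on{L}_{\mathbb{A}^1}\mathcal{U}(\mathcal{B}),\on{L}_{\mathbb{A}^1}\mathcal{U}(\mathcal{A}))\simeq\on{Mot}_{\on{loc}}(\on{perf}_S)(\mathcal{U}(\mathcal{B}),\on{L}_{\mathbb{A}^1}\mathcal{U}(\mathcal{A})).\]
So it suffices to (i) give an explicit formula for $\on{L}_{\mathbb{A}^1}\mathcal{U}(\mathcal{A})$, and (ii) check that $\on{Map}(\mathcal{U}(\mathcal{B}),-)$ commutes with it.

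For (i), I would use the standard Weibel construction. Let $\Delta^\bullet_S$ be the cosimplicial scheme of algebraic simplices $\on{Spec}\mathcal{O}_S[t_0,\dots,t_n]/(\sum t_i-1)$. Set
\[\on{L}\mathcal{U}(\mathcal{A}):=\on{colim}_{\Delta^{op}}\mathcal{U}(\on{perf}_{\Delta^n}\otimes_{\on{perf}_S}\mathcal{A}),\]
computed in $\on{Mot}_{\on{loc}}(\on{perf}_S)$. There are two things to show.

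\begin{enumerate}
\item The map $\mathcal{U}(\mathcal{A})\rightarrow\on{L}\mathcal{U}(\mathcal{A})$ is an $\mathcal{S}^{\mathbb{A}^1}_{\on{loc}}$-equivalence. Each degeneracy $\mathcal{U}(\mathcal{A})\rightarrow\mathcal{U}(\on{perf}_{\Delta^n}\otimes\mathcal{A})$ is inverted after $\mathbb{A}^1$-localization, since $\Delta^n\cong\mathbb{A}^n$ and we iterate the generating maps (extended from compact $\mathcal{C}$ to all $\mathcal{A}$ by filtered colimits). A geometric realization of a simplicial object that becomes constant after localization has colimit equivalent to $\on{L}_{\mathbb{A}^1}\mathcal{U}(\mathcal{A})$.
\item $\on{L}\mathcal{U}(\mathcal{A})$ is $\mathbb{A}^1$-local. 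This follows by the usual extra-degeneracy/homotopy argument: the multiplication $\mathbb{A}^1\times\mathbb{A}^1\rightarrow\mathbb{A}^1$ gives a simplicial homotopy showing that the functor $\mathcal{C}\mapsto\on{Map}(\mathcal{U}(\mathcal{C}),\on{L}\mathcal{U}(\mathcal{A}))$ is $\mathbb{A}^1$-invariant. This part needs care. Rather than testing locality directly, I would instead prove it only for the test object we need, as explained below.
\end{enumerate}

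For (ii), the key point is that $\mathcal{U}(\mathcal{B})$ is compact in $\on{Mot}_{\on{loc}}(\on{perf}_S)$ when $\mathcal{B}$ is smooth and proper. Indeed, by the corepresentability result \cite[Theorem 5.25]{HSS}, $\on{Map}(\mathcal{U}(\mathcal{B}),\mathcal{U}(-))\simeq\on{K}(\mathcal{B}^{op}\otimes-)$ commutes with filtered colimits; this alone is not enough for geometric realizations. The cleaner route uses stability: in a stable category geometric realizations are sequential colimits of finite colimits (the skeletal filtration), and both finite colimits and sequential colimits commute with $\on{Map}(\mathcal{U}(\mathcal{B}),-)$ once compactness is known. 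Compactness of $\mathcal{U}(\mathcal{B})$ for smooth proper $\mathcal{B}$ follows since $\mathcal{B}$ is compact in $\mathsf{Cat}^{\on{perf}}(\mathcal{E})$ and $\mathcal{U}$ of compact objects is compact. This gives
\[\on{Map}(\mathcal{U}(\mathcal{B}),\on{L}\mathcal{U}(\mathcal{A}))\simeq\on{colim}_{\Delta^{op}}\on{K}(\mathcal{B}^{op}\otimes_{\on{perf}_S}\on{perf}_{\Delta^n}\otimes_{\on{perf}_S}\mathcal{A}),\]
which is by definition $\on{KH}(\mathcal{B}^{op}\otimes_{\on{perf}_S}\mathcal{A})$ (Weibel's construction, using $\on{perf}_{\Delta^n}\otimes\mathcal{D}$).

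The main obstacle is locality in step (i)(2). I would sidestep it as follows. The right-hand side $\on{KH}(\mathcal{B}^{op}\otimes-)$ is an $\mathbb{A}^1$-invariant localizing invariant of $\mathcal{A}$ (homotopy invariance of $\on{KH}$ is classical). Hence the functor $\mathcal{A}\mapsto\on{KH}(\mathcal{B}^{op}\otimes\mathcal{A})$ factors through $\on{Mot}(S)^{\mathbb{A}^1}_{\on{loc}}$ as a colimit-preserving functor. Combining the universal property with the identification of its value on $\mathcal{U}(\mathcal{B})$-mapping spectra shows that the comparison map is an equivalence, because $\on{Map}(\on{L}_{\mathbb{A}^1}\mathcal{U}(\mathcal{B}),-)$ restricted to $\on{L}_{\mathbb{A}^1}\mathcal{U}(-)$ and the above colimit agree on the generating morphisms $\mathcal{S}^{\mathbb{A}^1}_{\on{loc}}$.
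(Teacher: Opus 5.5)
Your overall route is the same as the paper's. You reduce by adjunction to $\on{Map}_{\on{Mot}(S)_{\on{loc}}}(\mathcal{U}(\mathcal{B}),\on{L}_{\mathbb{A}^1}\mathcal{U}(\mathcal{A}))$, identify $\on{L}_{\mathbb{A}^1}\mathcal{U}(\mathcal{A})$ with the Weibel realization $\on{colim}_{\Delta^{op}}\mathcal{U}(\mathcal{A}\otimes_{\on{perf}_S}\on{perf}_{\Delta^n})$, pull the realization out of $\on{Map}(\mathcal{U}(\mathcal{B}),-)$, and apply \cite[Theorem 5.25]{HSS}. The paper does not prove the middle identification; it imports it from \cite[1.14.2]{Sos}. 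You try to prove it, and that is where your argument has a genuine gap.

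Step (i)(1) is fine, but (i)(2) is not. The homotopy built from $\mathbb{A}^1\times\mathbb{A}^1\to\mathbb{A}^1$ gives invariance in the variable \emph{inside} the construction, namely $\on{L}\mathcal{U}(\mathcal{A})\simeq\on{L}\mathcal{U}(\mathcal{A}\otimes\on{perf}_{\mathbb{A}^1})$. Locality, however, is invariance in the \emph{source} variable: $\on{Map}(\mathcal{U}(\mathcal{C}\otimes\on{perf}_{\mathbb{A}^1}),Y)\simeq\on{Map}(\mathcal{U}(\mathcal{C}),Y)$. You cannot pass from one to the other by moving $\Delta^n$ across $\on{Map}(\mathcal{U}(\mathcal{C}\otimes\on{perf}_{\mathbb{A}^1}),-)$, because $\mathcal{C}\otimes\on{perf}_{\mathbb{A}^1}$ is not proper and this functor need not commute with geometric realizations.

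The sidestep in your last paragraph is circular. The universal property gives a colimit-preserving $\bar F$ on $\on{Mot}(S)^{\mathbb{A}^1}_{\on{loc}}$ with $\bar F(\on{L}_{\mathbb{A}^1}\mathcal{U}(\mathcal{A}))\simeq\on{KH}(\mathcal{B}^{op}\otimes_{\on{perf}_S}\mathcal{A})$. But identifying $\bar F$ with $\on{Map}(\on{L}_{\mathbb{A}^1}\mathcal{U}(\mathcal{B}),-)$ \emph{is} the proposition. Two functors that both invert $\mathcal{S}^{\mathbb{A}^1}_{\on{loc}}$ need not agree. To compare them you would at least need $\on{Map}(\on{L}_{\mathbb{A}^1}\mathcal{U}(\mathcal{B}),-)$ to preserve colimits, i.e.\ essentially that $\mathbb{A}^1$-local objects are closed under filtered colimits. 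That is again exactly what an explicit formula for $\on{L}_{\mathbb{A}^1}$ provides.

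One way to close the gap is to use the monoidal structure. Put $R:=\on{colim}_{\Delta^{op}}\mathcal{U}(\on{perf}_{\Delta^n})$. This is an algebra in $\on{Mot}(S)_{\on{loc}}$ with $R\otimes\mathcal{U}(\mathcal{A})\simeq\on{L}\mathcal{U}(\mathcal{A})$. The usual $\on{Sing}$ argument, applied to the $\on{Mot}(S)_{\on{loc}}$-valued presheaf $X\mapsto\mathcal{U}(\on{perf}_X)$, is about the inner variable, so it legitimately gives $R\simeq R\otimes\mathcal{U}(\on{perf}_{\mathbb{A}^1})$. Then for every $R$-module $M$ and every $X$,
\[\on{Map}(X\otimes\mathcal{U}(\on{perf}_{\mathbb{A}^1}),M)\simeq\on{Map}_R(R\otimes X\otimes\mathcal{U}(\on{perf}_{\mathbb{A}^1}),M)\simeq\on{Map}_R(R\otimes X,M)\simeq\on{Map}(X,M).\]
This is locality in the correct variable, and together with (i)(1) it gives $\on{L}_{\mathbb{A}^1}\simeq R\otimes-$.

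Separately, your justification for the compactness of $\mathcal{U}(\mathcal{B})$ is wrong. $\mathcal{U}$ does not send compact objects to compact objects in general. By Efimov's big Witt vector computation quoted at the end of the paper, $\on{Map}(\mathcal{U}\on{perf}_{k[t]},\mathcal{U}\on{perf}_R)$ does not commute with filtered colimits in $R$, even though $\on{perf}_{k[t]}$ is compact. This is why corepresentability is only available for smooth and proper $\mathcal{B}$. Argue instead as follows: $\mathcal{B}$ is dualizable in $\mathsf{Cat}^{\on{perf}}(\on{perf}_S)$, so $\mathcal{U}(\mathcal{B})$ is dualizable in $\on{Mot}(S)_{\on{loc}}$ with dual $\mathcal{U}(\mathcal{B}^{op})$. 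It is therefore compact as soon as the unit is compact, and that fact needs its own justification.
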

\begin{proof}
Denote by $\on{L}_{\mathbb{A}^1}$ the localization functor\[
\on{L}_{\mathbb{A}^1}: \on{Mot}(S)_{\on{loc}}\rightarrow\on{Mot}(S)_{\on{loc}}^{\mathbb{A}^1}.\]
Use the formula for the $\mathbb{A}^1$-localization of a finitary localization invariant in 1.14.2 of \cite{Sos}, we get\[
\begin{aligned}
\on{Mot}(S)_{\on{loc}}^{\mathbb{A}^1}(\on{L}_{\mathbb{A}^1}\circ\mathcal{U}(\mathcal{B}),\on{L}_{\mathbb{A}^1}\circ\mathcal{U}(\mathcal{A}))&\simeq\on{Mot}(S)_{\on{loc}}(\mathcal{U}(\mathcal{B}),\on{L}_{\mathbb{A}^1}\circ\mathcal{U}(\mathcal{A}))\\
&\simeq\on{Mot}(S)_{\on{loc}}(\mathcal{U}(\mathcal{B}),\underset{[n] \in \Delta^{\mathrm{op}}}{colim}\mathcal{U}(\mathcal{A} \otimes_{\on{perf}_S} \on{perf}_{\mathbb{A}^n}))\\
&\simeq\on{K}(\mathcal{B}^{op}\otimes_{\on{perf}_S}\underset{[n] \in \Delta^{\mathrm{op}}}{colim}\mathcal{A} \otimes_{\on{perf}_S} \on{perf}_{\mathbb{A}^n})\\
&\simeq\underset{[n] \in \Delta^{\mathrm{op}}}{colim}\on{K}(\mathcal{B}^{op}\otimes_{\on{perf}_S}\mathcal{A} \otimes_{\on{perf}_S} \on{perf}_{\mathbb{A}^n})\\
&\simeq\on{KH}(\mathcal{B}^{op}\otimes_{\on{perf}_S} \mathcal{A}).
\end{aligned}\]
\end{proof}
The goal is to extend the functor\[
\begin{aligned}
\phi: Sm^{\on{fp}}(S)^{op} & \rightarrow \on{Mot}(S)_{\on{loc}}^{\mathbb{A}^1}\\
X & \mapsto \mathcal{U}(\on{perf}_X)
\end{aligned}\]
to a left adjoint functor\[
\Phi: \mathcal{SH}(S)^{\vee}\rightarrow \on{Mot}(S)_{\on{loc}}^{\mathbb{A}^1}.\] 
To do this, we will realize the dual category $\mathcal{SH}(S)^{\vee}$ as a category of cosheaves, with $\mathbb{P}^1$ formally inverted. As we will see, such realization is naturally equipped with a universal property allowing us to extend certain functors, including $\phi$.

\subsubsection{Computation of $\mathcal{SH}(S)^{\vee}$}
We note first that in the construction of $\mathcal{SH}(S)$, instead of beginning from the anima-valued presheaf category $Psh(Sm^{\on{fp}}(S))$, we could also begin from the spectra-valued presheaf category $\mathcal{P}sh(Sm^{\on{fp}}(S))$. In fact, denote by $\Sigma^\infty_+$ the stabilization functor, then we have the following commutative diagram,
\[\begin{tikzcd}
	{Psh(Sm^{\on{fp}}(S))} && {\mathcal{P}sh(Sm^{\on{fp}}(S))} \\
	\\
	{Shv_{\on{Nis},\mathbb{A}^1}(Sm^{\on{fp}}(S))} && {\mathcal{S}hv_{\on{Nis},\mathbb{A}^1}(Sm^{\on{fp}}(S))} \\
	\\
	{\mathcal{SH}(S)} && {\mathcal{S}hv_{\on{Nis},\mathbb{A}^1}(Sm^{\on{fp}}(S))[(\mathbb{P}^1)^{-1}]}
	\arrow["{{\Sigma^\infty_+}}", from=1-1, to=1-3]
	\arrow["{{\on{L}_{\on{Nis},\mathbb{A}^1}}}", from=1-1, to=3-1]
	\arrow["{{\on{L}_{\on{Nis},\mathbb{A}^1}}}", from=1-3, to=3-3]
	\arrow["{{\Sigma^\infty_+}}", from=3-1, to=3-3]
	\arrow["{{\on{L}_{\mathbb{P}^1}\circ (-)_+}}", from=3-1, to=5-1]
	\arrow["{{\on{L}_{\mathbb{P}^1}}}", from=3-3, to=5-3]
	\arrow["{{\Sigma^\infty}}", from=5-1, to=5-3].
\end{tikzcd}\]
Here and in the following, the notation $Psh$ (resp. $Shv$) stands for anima-valued presheaf (resp. sheaf) category, while $\mathcal{P}sh$ (resp. $\mathcal{S}hv$) stands for spectra-valued presheaf (resp. sheaf) category. The commutativity of the upper square follows the fact that the Nisnevich excision and $\mathbb{A}^1$-invariance descent along the functor $\Sigma^\infty_+$. And the commutativity of the lower square follows from
the commutativity of two successively formal inversions, see \cite[Remark 4.11]{Rob}. On the other hand, since $\mathcal{SH}(S)$ is already stable, the bottom $\Sigma^\infty$ is an equivalence.

By the discussion in Section \ref{compactibility} and Section \ref{SH}, every category appeared in the diagram is presentable, compactly generated, and every vertical functor is strongly left adjoint. We will compute the image of the sequence\[
\mathcal{P}sh(Sm^{\on{fp}}(S))\xrightarrow{\on{L}_{\on{Nis},\mathbb{A}^1}}\mathcal{S}hv_{\on{Nis},\mathbb{A}^1}(Sm^{\on{fp}}(S))\xrightarrow{\on{L}_{\mathbb{P}^1}}\mathcal{S}hv_{\on{Nis},\mathbb{A}^1}(Sm^{\on{fp}}(S))[(\mathbb{P}^1)^{-1}]
\]
in $\PrL_{cg}$ under the functor $(-)^{\vee}:\PrL_{cg}\rightarrow\PrL_{cg}$.
\begin{lm}\label{id1}
The dual of $\mathcal{P}sh(Sm^{\on{fp}}(S))$ is $\mathcal{P}sh(Sm^{\on{fp}}(S)^{op})$.
\end{lm}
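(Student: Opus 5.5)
The plan is to reduce the claim to the equivalence between compactly generated categories and their compact parts, together with the commutative diagram of Section \ref{compactibility}, which identifies $(-)^\vee$ on $\PrL_{cg}$ with $\on{Ind}((-)^{op})$ on $\mathsf{Cat}^{\on{perf}}$. Concretely, I will show that $\mathcal{P}sh(Sm^{\on{fp}}(S))$ is compactly generated, identify its compact objects, take their opposite, and recognize the Ind-completion of that opposite as $\mathcal{P}sh(Sm^{\on{fp}}(S)^{op})$.

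First, write $\mathcal{C}=Sm^{\on{fp}}(S)$, which is essentially small since $S$ is qcqs and we restrict to finitely presented schemes. The category $\mathcal{P}sh(\mathcal{C})=\on{Fun}(\mathcal{C}^{op},\on{Sp})$ is compactly generated by the spectral Yoneda images $\Sigma^\infty_+ j(X)$, $X\in\mathcal{C}$. Its compact objects form the thick (idempotent complete, stable) subcategory generated by these, and I will denote this subcategory by $\mathcal{P}sh(\mathcal{C})^\omega$. Next, I will identify $\mathcal{P}sh(\mathcal{C})$ with $\on{Fun}^L(\mathcal{P}sh(\mathcal{C}^{op}),\on{Sp})$; this is the cleanest route. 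By the universal property of spectral presheaves (the free stable presentable category on $\mathcal{C}^{op}$), a colimit-preserving functor $\mathcal{P}sh(\mathcal{C}^{op})\to\on{Sp}$ is the same as a functor $\mathcal{C}^{op}\to\on{Sp}$, i.e.\ an object of $\mathcal{P}sh(\mathcal{C})$. Since $\mathcal{P}sh(\mathcal{C}^{op})$ is compactly generated, hence dualizable with dual $\on{Fun}^L(-,\on{Sp})$, this gives $\mathcal{P}sh(\mathcal{C}^{op})^\vee\simeq\mathcal{P}sh(\mathcal{C})$, and therefore $\mathcal{P}sh(\mathcal{C})^\vee\simeq\mathcal{P}sh(\mathcal{C}^{op})$, using that the dual of a dualizable object is dualizable with double dual the original object.

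Alternatively, and as a cross-check, I will use the fact that $\mathcal{P}sh(\mathcal{C})\otimes\mathcal{P}sh(\mathcal{C}^{op})\simeq\mathcal{P}sh(\mathcal{C}\times\mathcal{C}^{op})$. Under this identification, the evaluation map is given by the coend pairing, i.e.\ the left Kan extension of the mapping-spectrum functor $\mathcal{C}^{op}\times\mathcal{C}\to\on{Sp}$, and the coevaluation map picks out the identity bimodule $\Sigma^\infty_+\on{Map}_{\mathcal{C}}(-,-)$. The triangle identities then reduce to the (co)Yoneda lemma.

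The main obstacle is bookkeeping of variance, namely making sure the identification is $\mathcal{P}sh(\mathcal{C}^{op})$ and not $\mathcal{P}sh(\mathcal{C})$ again. I will settle this in the first route by the explicit computation $\on{Fun}^L(\on{Fun}(\mathcal{C},\on{Sp}),\on{Sp})\simeq\on{Fun}(\mathcal{C}^{op},\on{Sp})$, which restricts along the Yoneda embedding $\mathcal{C}^{op}\to\on{Fun}(\mathcal{C},\on{Sp})$. For compatibility with Remark \ref{dual.comp}, I will also note that under this equivalence the dual of a representable $X$ corresponds to the representable of $X$ in $\mathcal{C}^{op}$.
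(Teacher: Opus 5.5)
Your argument is correct and is essentially the paper's: the paper computes $\mathcal{P}sh(\mathcal{C})^\vee=\on{Fun}^L(\mathcal{P}sh(\mathcal{C}),\on{Sp})\simeq\on{Fun}(\mathcal{C},\on{Sp})=\mathcal{P}sh(\mathcal{C}^{op})$ in one step from the universal property of spectral presheaves, whereas you run the same computation for $\mathcal{C}^{op}$ and then use reflexivity of duals, which is a harmless but unnecessary detour. Neither the compact-objects/$\on{Ind}((-)^{op})$ plan in your first paragraph nor the explicit $ev$/$coev$ check is needed, although both would also work.
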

\begin{proof}
This follows a direct computation 
\[\mathcal{P}sh(Sm^{\on{fp}}(S))^{\vee}=\on{Fun}^L(\mathcal{P}sh(Sm^{\on{fp}}(S)),\on{Sp})=\on{Fun}(Sm^{\on{fp}}(S),\on{Sp})=\mathcal{P}sh(Sm^{\on{fp}}(S)^{op}).\]
\end{proof}
\begin{lm}\label{id2}
The composition $\on{L}_{\on{Nis},\mathbb{A}^1}\circ y: Sm^{\on{fp}}(S)\rightarrow\mathcal{S}hv_{\on{Nis},\mathbb{A}^1}(Sm^{\on{fp}}(S))$ induces a fully faithful map:\[
\on{Fun}^L(\mathcal{S}hv_{\on{Nis},\mathbb{A}^1}(Sm^{\on{fp}}(S)),\on{Sp})\rightarrow\on{Fun}(Sm^{\on{fp}}(S),\on{Sp}).\]
The essential image is precisely the subcategory spanned by functors $F: Sm^{\on{fp}}(S)\rightarrow\on{Sp}$ satisfying $\mathbb{A}^1$-invariance, sending empty scheme $\emptyset$ to the zero object, and sending Nisnevich squares to pushouts. We denote the essential image by $\on{Fun}_{\on{Nis},\mathbb{A}^1}(Sm^{\on{fp}}(S),\on{Sp})$.
\end{lm}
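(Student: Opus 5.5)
The idea is to combine the universal property of spectral presheaves with the universal property of Bousfield localization. First I use Lemma~\ref{id1}: composing with the stable Yoneda embedding $\Sigma^\infty_+\circ y$ gives an equivalence
\[\on{Fun}^L(\mathcal{P}sh(Sm^{\on{fp}}(S)),\on{Sp})\simeq\on{Fun}(Sm^{\on{fp}}(S),\on{Sp}).\]
Next, $\on{L}_{\on{Nis},\mathbb{A}^1}$ is a left Bousfield localization of $\mathcal{P}sh(Sm^{\on{fp}}(S))$. Precomposition with it therefore identifies $\on{Fun}^L(\mathcal{S}hv_{\on{Nis},\mathbb{A}^1}(Sm^{\on{fp}}(S)),\on{Sp})$ with the full subcategory of $\on{Fun}^L(\mathcal{P}sh(Sm^{\on{fp}}(S)),\on{Sp})$ of those colimit-preserving functors that invert the generating local equivalences (\cite[Proposition 5.5.4.20]{HTT}). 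Composing the two steps gives full faithfulness at once. It also reduces the description of the essential image to translating ``inverts the generating morphisms'' into conditions on the restriction $F=G\circ\Sigma^\infty_+ y$.

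To do this translation, I first write down a generating set $\mathcal{S}$ for the stable localization $\on{L}_{\on{Nis},\mathbb{A}^1}$. By Lemma~\ref{Sheaf.Theory.of.Nis.} (on the qcqs base, Nisnevich sheaves on $Sm^{\on{fp}}(S)$ are exactly the presheaves satisfying Nisnevich excision), and since the localization is stable, I take $\mathcal{S}$ to consist of the following morphisms, together with all their (de)suspensions:
\begin{itemize}
\item[(i)] $0\to\Sigma^\infty_+ y(\emptyset)$;
\item[(ii)] for each Nisnevich square $Q$ with vertices $W,V,U,X$, the map
\[\Sigma^\infty_+y(U)\sqcup_{\Sigma^\infty_+y(W)}\Sigma^\infty_+y(V)\to\Sigma^\infty_+y(X),\]
the pushout being taken in $\mathcal{P}sh$;
\item[(iii)] $\Sigma^\infty_+y(X\times\mathbb{A}^1)\to\Sigma^\infty_+y(X)$.
\end{itemize}
A spectral presheaf $E$ is $\mathcal{S}$-local exactly when its mapping spectra out of these maps are equivalences. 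By Yoneda these mapping spectra are $E(\emptyset)\simeq0$, $E(Q)$ cartesian, and $E(X)\simeq E(X\times\mathbb{A}^1)$. So the $\mathcal{S}$-local objects are exactly the $\mathbb{A}^1$-invariant Nisnevich sheaves, the spectral analogue of the description used for $\mathcal{H}(S)$.

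Now let $G$ be colimit-preserving with restriction $F$. Then $G$ sends the maps in (i)--(iii) to $0\to F(\emptyset)$, to $F(U)\sqcup_{F(W)}F(V)\to F(X)$, and to $F(X\times\mathbb{A}^1)\to F(X)$. Since $G$ is exact, it inverts the suspensions of these maps as soon as it inverts the maps themselves. Hence $G$ inverts $\mathcal{S}$ if and only if $F(\emptyset)=0$, $F$ sends Nisnevich squares to pushouts, and $F$ is $\mathbb{A}^1$-invariant, which is exactly the claimed essential image.

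The main obstacle is the first step of the previous paragraph: checking that the localization $\on{L}_{\on{Nis},\mathbb{A}^1}$ in the paper's diagram, which is the one obtained by transporting the unstable localization along $\Sigma^\infty_+$, really is the Bousfield localization at the stable saturation of $\mathcal{S}$. Here I would use the stated commutativity of the upper square of that diagram: Nisnevich excision and $\mathbb{A}^1$-invariance are conditions on finite limits, hence are detected levelwise on $\Omega^{\infty-n}$. Once this identification is in place, the rest is formal.
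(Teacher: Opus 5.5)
Your proposal is correct and is essentially the paper's proof: identify $\on{Fun}^L(\mathcal{P}sh(Sm^{\on{fp}}(S)),\on{Sp})$ with $\on{Fun}(Sm^{\on{fp}}(S),\on{Sp})$ via $\on{res}/\on{Lan}$, use \cite[Proposition 5.5.4.20]{HTT} to cut out the colimit-preserving functors inverting the generating set $\mathcal{S}$, and translate this via Yoneda into the three conditions on $F$. Including the (de)suspensions in $\mathcal{S}$ is a useful precision the paper leaves implicit, since locality in mapping anima alone only sees $\Omega^\infty$. The ``main obstacle'' you flag is not really one: $\mathcal{S}hv_{\on{Nis},\mathbb{A}^1}(Sm^{\on{fp}}(S))$ is by definition the subcategory of spectral presheaves that you have already identified with the $\mathcal{S}$-local objects.
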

\begin{proof}
Recall that whether a presheaf lying in $\mathcal{S}hv_{\on{Nis},\mathbb{A}^1}(Sm^{\on{fp}}(S))$ is detected fully by Nisnevich excision and $\mathbb{A}^1$-invariance. Alternatively, $\mathcal{S}hv_{\on{Nis},\mathbb{A}^1}(Sm^{\on{fp}}(S))$ is spanned by the $\mathcal{S}$-local objects of $\mathcal{P}sh(Sm^{\on{fp}}(S))$, where $\mathcal{S}$ is the set of morphisms
\begin{itemize}
\item $0\rightarrow y(\emptyset)$, where $0$ is the zero object;
\item for each Nisnevich square\[\begin{tikzcd}
	W && U \\
	\\
	V && X
	\arrow[from=1-1, to=1-3]
	\arrow[from=1-1, to=3-1]
	\arrow[from=1-3, to=3-3]
	\arrow[from=3-1, to=3-3],
\end{tikzcd}\]
$y(U)\amalg_{y(W)}y(V)\rightarrow y(X)$;
\item for each $X\in Sm^{\on{fp}}(S)$, $y(X\times\mathbb{A}^1)\rightarrow y(X)$.
\end{itemize}
Then we consider the equivalence\[
\on{res}: \on{Fun}^L(\mathcal{P}sh(Sm^{\on{fp}}(S)),\on{Sp})\leftrightarrows\on{Fun}(Sm^{\on{fp}}(S),\on{Sp}): \on{Lan}.\]
We claim that the subcategory of $\on{Fun}^L(\mathcal{P}sh(Sm^{\on{fp}}(S)),\on{Sp})$ spanned by functors that factor through $\mathcal{S}hv_{\on{Nis},\mathbb{A}^1}(Sm^{\on{fp}}(S))$ corresponds to the essential image described as in the lemma. By \cite[Proposition 5.5.4.20]{HTT}, this subcategory on the left hand side is spanned precisely by the colimits-preserving functors that invert all morphisms in $\mathcal{S}$. But it is easy to check that a functor $F:Sm^{\on{fp}}(S)\rightarrow\on{Sp}$ is $\mathbb{A}^1$-invariant, preserves initial object, and sends Nisnevich squares to pushouts if and only if its left Kan extension $\on{Lan}F$ sends all $s\in\mathcal{S}$ to equivalences.
\end{proof}
We also call the dual of category of sheaves category of cosheaves. In this case we use the notation\[
co\mathcal{S}hv_{\on{coNis},co{\text -}\mathbb{A}^1}(Sm^{\on{fp}}(S)):=\mathcal{S}hv_{\on{Nis},\mathbb{A}^1}(Sm^{\on{fp}}(S))^\vee.\]
\begin{rmk}\label{coshv.B.loc}
Classically, for a presentable category $\mathcal{E}$ and a site $(\mathcal{C},\tau)$, we define the category of $\mathcal{E}$-valued cosheaves as the opposite category of the category of $\mathcal{E}^{op}$-valued sheaves, i.e.,\[
coShv(\mathcal{C};\mathcal{E}):=Shv(\mathcal{C};\mathcal{E}^{op})^{op}.\]
If the $\mathcal{E}$-valued sheaf category is dualizable, this classical notion of category of cosheaves is the same as ours, since as subcategories of presheaf category, they are checked by exactly the same codescent conditions.
\end{rmk}
\begin{lm}\label{desc}
Via the equivalences in Lemma \ref{id1} and Lemma \ref{id2}, the functor\[
\on{Fun}(Sm^{\on{fp}}(S),\on{Sp})\rightarrow\on{Fun}_{\on{Nis},\mathbb{A}^1}(Sm^{\on{fp}}(S),\on{Sp})\]
induced by\[
\on{L}_{\on{Nis},\mathbb{A}^1}^\vee:\on{Fun}^L(\mathcal{P}sh(Sm^{\on{fp}}(S)),\on{Sp})\rightarrow\on{Fun}^L(\mathcal{S}hv_{\on{Nis},\mathbb{A}^1}(Sm^{\on{fp}}(S)),\on{Sp})\]
is a Bousfield localization with respect to $\mathcal{S}$ consisting of maps
\begin{itemize}
\item $Sm^{\on{fp}}(S)(\emptyset,-)\rightarrow0$;
\item for each Nisnevich square\[\begin{tikzcd}
	W && U \\
	\\
	V && X
	\arrow[from=1-1, to=1-3]
	\arrow[from=1-1, to=3-1]
	\arrow[from=1-3, to=3-3]
	\arrow[from=3-1, to=3-3],
\end{tikzcd}\]
$Sm^{\on{fp}}(S)(U,-)\amalg_{Sm^{\on{fp}}(S)(X,-)}Sm^{\on{fp}}(S)(V,-)\rightarrow Sm^{\on{fp}}(S)(W,-)$;
\item for each $X\in Sm^{\on{fp}}(S)$, $Sm^{\on{fp}}(S)(X,-)\rightarrow Sm^{\on{fp}}(S)(X\times\mathbb{A}^1,-)$.
\end{itemize}
\end{lm}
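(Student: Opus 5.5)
Via the identifications of Lemma \ref{id1} and Lemma \ref{id2}, the functor $\on{L}_{\on{Nis},\mathbb{A}^1}^\vee$ becomes a functor $\on{Fun}(Sm^{\on{fp}}(S),\on{Sp})\rightarrow\on{Fun}_{\on{Nis},\mathbb{A}^1}(Sm^{\on{fp}}(S),\on{Sp})$. The plan is to show that it is left adjoint to the full inclusion $\on{Fun}_{\on{Nis},\mathbb{A}^1}(Sm^{\on{fp}}(S),\on{Sp})\hookrightarrow\on{Fun}(Sm^{\on{fp}}(S),\on{Sp})$. It will then be a Bousfield localization, and it remains to identify its local objects with the $\mathcal{S}$-local objects for the corepresentable maps listed in the statement.

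\textbf{Step 1: identify the adjoint pair.} Recall from Section \ref{compactibility} that $L^\vee$, for a strongly left adjoint $L$, is the left adjoint of precomposition with $L$. With $L=\on{L}_{\on{Nis},\mathbb{A}^1}$, the functor $\on{L}_{\on{Nis},\mathbb{A}^1}^\vee$ is therefore left adjoint to
\[-\circ\on{L}_{\on{Nis},\mathbb{A}^1}:\on{Fun}^L(\mathcal{S}hv_{\on{Nis},\mathbb{A}^1}(Sm^{\on{fp}}(S)),\on{Sp})\rightarrow\on{Fun}^L(\mathcal{P}sh(Sm^{\on{fp}}(S)),\on{Sp}).\]
Transport this along $\on{res}$ on the target and along restriction to $\on{L}_{\on{Nis},\mathbb{A}^1}\circ y$ on the source, as in Lemma \ref{id2}. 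A functor $G$ on sheaves goes to $G\circ\on{L}_{\on{Nis},\mathbb{A}^1}\circ y$, which is exactly the image of $G$ in $\on{Fun}_{\on{Nis},\mathbb{A}^1}(Sm^{\on{fp}}(S),\on{Sp})$. So the right adjoint is the fully faithful inclusion, and the counit is an equivalence. This makes $\on{L}_{\on{Nis},\mathbb{A}^1}^\vee$ a localization onto the subcategory described in Lemma \ref{id2}.

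\textbf{Step 2: local objects are the $\mathcal{S}$-local objects.} Note that $\on{Fun}(Sm^{\on{fp}}(S),\on{Sp})=\mathcal{P}sh(Sm^{\on{fp}}(S)^{op})$, in which the Yoneda image of $X$ is the corepresentable $Sm^{\on{fp}}(S)(X,-)$, that is, $\Sigma^\infty_+$ of the mapping anima. By the spectral Yoneda lemma,
\[\on{Map}(Sm^{\on{fp}}(S)(X,-),F)\simeq F(X).\]
Since mapping into $F$ turns colimits in the source into limits, the three families of maps are treated as follows.
\begin{itemize}
\item $F$ is local for $Sm^{\on{fp}}(S)(\emptyset,-)\rightarrow 0$ if and only if $F(\emptyset)\simeq 0$.
\item $F$ is local for $Sm^{\on{fp}}(S)(U,-)\amalg_{Sm^{\on{fp}}(S)(X,-)}Sm^{\on{fp}}(S)(V,-)\rightarrow Sm^{\on{fp}}(S)(W,-)$ if and only if $F(W)\rightarrow F(U)\times_{F(X)}F(V)$ is an equivalence. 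Because $\on{Sp}$ is stable, this is the same as $F$ sending the Nisnevich square to a pushout.
\item $F$ is local for $Sm^{\on{fp}}(S)(X,-)\rightarrow Sm^{\on{fp}}(S)(X\times\mathbb{A}^1,-)$ if and only if $F(X\times\mathbb{A}^1)\rightarrow F(X)$ is an equivalence, which is $\mathbb{A}^1$-invariance of $F$.
\end{itemize}
Together these give exactly the essential image in Lemma \ref{id2}. Since $\mathcal{S}$ is a small set, the localization is accessible, and by uniqueness of adjoints it agrees with the one found in Step 1.

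\textbf{Main obstacle.} The delicate point is Step 1. It requires checking carefully that the dual functor, defined via the equivalence $\mathcal{C}^\vee\simeq\on{Fun}^L(\mathcal{C},\on{Sp})$, matches the concrete identifications of Lemma \ref{id1} and Lemma \ref{id2}. These use left Kan extension along $y$ and restriction along $\on{L}_{\on{Nis},\mathbb{A}^1}\circ y$, and both triangles must commute so that the right adjoint really is the naive inclusion. I would verify this by evaluating both composites on corepresentables, where everything reduces to the Yoneda lemma. Everything else is formal.
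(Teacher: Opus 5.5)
Your proposal is correct and follows essentially the same route as the paper. In both, the right adjoint of $\on{L}_{\on{Nis},\mathbb{A}^1}^\vee$ is precomposition with $\on{L}_{\on{Nis},\mathbb{A}^1}$, which under the identifications of Lemma \ref{id1} and Lemma \ref{id2} becomes the fully faithful inclusion, and the spectral Yoneda lemma shows that this subcategory is exactly the $\mathcal{S}$-local objects; you just write out the Yoneda computations and the compatibility of identifications that the paper dismisses as ``a similar argument.''
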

\begin{proof}
By a similar argument, we see that $\on{Fun}_{\on{Nis},\mathbb{A}^1}(Sm^{\on{fp}}(S),\on{Sp})$ is indeed the subcategory of $\mathcal{S}$-local objects. On the other hand, the right adjoint\[
\on{Fun}_{\on{Nis},\mathbb{A}^1}(Sm^{\on{fp}}(S),\on{Sp})\rightarrow\on{Fun}(Sm^{\on{fp}}(S),\on{Sp})\]
is induced by the precomposition functor\[
\on{L}_{\on{Nis},\mathbb{A}^1}^\ast:\on{Fun}^L(\mathcal{S}hv_{\on{Nis},\mathbb{A}^1}(Sm^{\on{fp}}(S)),\on{Sp})\rightarrow\on{Fun}^L(\mathcal{P}sh(Sm^{\on{fp}}(S)),\on{Sp}).\]
Applying identifications Lemma \ref{id1} and Lemma \ref{id2} again, we see that it is fully faithful.
\end{proof}
\begin{rmk}
In the proof of the lemma above, it is important that we work with (co)sheaves valued in a stable category, hence a square is a pullback if and only if it is a pushout. The techniques involved in this and next sections work equally for stable-category-valued sheaves on a site generated by a cd-structure. A rough idea is that, while we want a universal characterization of the form \ref{cons}, the category of cosheaves most naturally admits a dual form of universal property. The stability and cd-structure-generation assumptions blur this difference.
\end{rmk}
Finally, since $\mathbb{P}^1$ is compact in $\mathcal{S}hv_{\on{Nis},\mathbb{A}^1}(Sm^{\on{fp}}(S))$, by the compatibility of taking dual and formal inversion in Section \ref{compactibility}, we have\[
\mathcal{SH}(S)^{\vee}\simeq\mathcal{S}hv_{\on{Nis},\mathbb{A}^1}(Sm^{\on{fp}}(S))[(\mathbb{P}^1)^{-1}]^{\vee}\simeq co\mathcal{S}hv_{\on{coNis},co{\text -}\mathbb{A}^1}(Sm^{\on{fp}}(S))[((\mathbb{P}^1)^{dual})^{-1}].\]

\subsubsection{Extending the functor $\phi$}
In the above section, we compute the dual of $\mathcal{SH}(S)$: It is the formal inversion with respect to $(\mathbb{P}^1)^{dual}$ of the full subcategory of $\mathcal{P}sh((Sm^{\on{fp}}(S))^{op})$ spanned by functors satisfying Nisnevich coexcision and being co-$\mathbb{A}^1$-invariant. In this section, we use the universal properties arising naturally from the computations in the last section to extend the obvious functor $\phi: Sm^{\on{fp}}(S)^{op}\rightarrow \on{Mot}(S)_{\on{loc}}^{\mathbb{A}^1}$ to a functor $\Phi: \mathcal{SH}(S)^{\vee}\rightarrow \on{Mot}(S)_{\on{loc}}^{\mathbb{A}^1}$. Step by step, we follow the diagram:
\[\begin{tikzcd}
	{Sm^{\on{fp}}(S)^{op}} &&& {\on{Mot}(S)_{\on{loc}}^{\mathbb{A}^1}} \\
	{\mathcal{P}sh(Sm^{\on{fp}}(S)^{op})} \\
	{co\mathcal{S}hv_{\on{coNis},co{\text -}\mathbb{A}^1}(Sm^{\on{fp}}(S))} \\
	{co\mathcal{S}hv_{\on{coNis},co{\text -}\mathbb{A}^1}(Sm^{\on{fp}}(S))[((\mathbb{P}^1)^{dual})^{-1}]}
	\arrow["\phi", from=1-1, to=1-4]
	\arrow["j"', from=1-1, to=2-1]
	\arrow[from=2-1, to=1-4]
	\arrow["{\on{L}_{\on{coNis},\on{co}{\text -}\mathbb{A}^1}}"', from=2-1, to=3-1]
	\arrow[from=3-1, to=1-4]
	\arrow["{\on{L}_{(\mathbb{P}^1)^{dual}}}"', from=3-1, to=4-1]
	\arrow["\Phi"', from=4-1, to=1-4]
\end{tikzcd}\]

Note first that $\on{Mot}(S)_{\on{loc}}^{\mathbb{A}^1}$ is a presentable stable category. The spectral Yoneda embedding induces an equivalence \[\on{Fun}^{L}(\mathcal{P}sh(Sm^{\on{fp}}(S)^{op}), \on{Mot}(S)_{\on{loc}}^{\mathbb{A}^1})\simeq\on{Fun}(Sm^{\on{fp}}(S)^{op},\on{Mot}(S)_{\on{loc}}^{\mathbb{A}^1}).\]

As discussed in Lemma \ref{desc}, $co\mathcal{S}hv_{\on{Nis},\mathbb{A}^1}(Sm^{\on{fp}}(S))$ is the subcategory of $\mathcal{P}sh(Sm^{\on{fp}}(S)^{op})$ spanned by the $\mathcal{S}$-local objects. Applying a similar argument as in the last section again, the universal property of the category of cosheaves reads:
\begin{lm}\label{coShv.universal.property}
The composition of Yoneda embedding and localization induces an equivalence of functor categories\[
\on{Fun}^{L}(co\mathcal{S}hv_{\on{Nis},\mathbb{A}^1}(Sm^{\on{fp}}(S)), \on{Mot}(S)_{\on{loc}}^{\mathbb{A}^1})\simeq\on{Fun}_{\on{Nis},\mathbb{A}^1}(Sm^{\on{fp}}(S)^{op},\on{Mot}(S)_{\on{loc}}^{\mathbb{A}^1}),\]
where the subcategory on the right hand side is spanned by all the $\mathbb{A}^1$-invariant functors preserving terminal object (or, sending the empty scheme $\emptyset$ to the zero object) and sending Nisnevich squares to pushouts.
\end{lm}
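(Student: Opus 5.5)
The plan is to combine the universal property of spectral presheaves with the universal property of Bousfield localizations, just as in the proof of Lemma \ref{id2}, but now with the roles of $Sm^{\on{fp}}(S)$ and $Sm^{\on{fp}}(S)^{op}$ interchanged and with target $\on{Mot}(S)_{\on{loc}}^{\mathbb{A}^1}$ in place of $\on{Sp}$.

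First, since $\on{Mot}(S)_{\on{loc}}^{\mathbb{A}^1}$ is presentable and stable, restriction along the spectral Yoneda embedding $j\colon Sm^{\on{fp}}(S)^{op}\to\mathcal{P}sh(Sm^{\on{fp}}(S)^{op})$ gives an equivalence
\[\on{res}\colon\on{Fun}^{L}(\mathcal{P}sh(Sm^{\on{fp}}(S)^{op}),\on{Mot}(S)_{\on{loc}}^{\mathbb{A}^1})\simeq\on{Fun}(Sm^{\on{fp}}(S)^{op},\on{Mot}(S)_{\on{loc}}^{\mathbb{A}^1}),\]
with inverse given by left Kan extension $\on{Lan}$. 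This is the equivalence already recorded just before the statement.

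Second, Lemma \ref{desc} identifies $co\mathcal{S}hv_{\on{coNis},co{\text -}\mathbb{A}^1}(Sm^{\on{fp}}(S))$ with the full subcategory of $\mathcal{S}$-local objects of $\mathcal{P}sh(Sm^{\on{fp}}(S)^{op})$. Here $\mathcal{S}$ is the small set of maps listed there: $j(\emptyset)\to 0$, the maps $j(U)\amalg_{j(X)}j(V)\to j(W)$ for each Nisnevich square, and $j(X)\to j(X\times\mathbb{A}^1)$. By \cite[Proposition 5.5.4.20]{HTT}, precomposition with $\on{L}_{\on{coNis},\on{co}{\text -}\mathbb{A}^1}$ is fully faithful on colimit-preserving functors. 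Its essential image consists of those colimit-preserving functors out of $\mathcal{P}sh(Sm^{\on{fp}}(S)^{op})$ that invert every map in $\mathcal{S}$. Composing with the first step, the left side of the lemma is therefore equivalent to the full subcategory of $\on{Fun}(Sm^{\on{fp}}(S)^{op},\on{Mot}(S)_{\on{loc}}^{\mathbb{A}^1})$ spanned by those $F$ for which $\on{Lan}F$ inverts $\mathcal{S}$.

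Third, I translate the condition ``$\on{Lan}F$ inverts $\mathcal{S}$'' map by map. Since $\on{Lan}F$ preserves colimits and satisfies $\on{Lan}F\circ j\simeq F$, the images of the three kinds of maps are as follows.
\begin{enumerate}
\item The map $j(\emptyset)\to0$ goes to $F(\emptyset)\to 0$, so inverting it means $F(\emptyset)\simeq 0$. As the target is stable, $0$ is also terminal, so this is the same as preserving the terminal object.
\item The map attached to a Nisnevich square goes to $F(U)\amalg_{F(X)}F(V)\to F(W)$, where $W\to U,V\to X$ in $Sm^{\on{fp}}(S)$ become maps in the opposite direction in $Sm^{\on{fp}}(S)^{op}$. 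Inverting it means exactly that $F$ sends the Nisnevich square to a pushout.
\item The map $j(X)\to j(X\times\mathbb{A}^1)$ goes to $F(X)\to F(X\times\mathbb{A}^1)$, so inverting it is exactly $\mathbb{A}^1$-invariance.
\end{enumerate}
This gives the claimed essential image. Because $\on{Mot}(S)_{\on{loc}}^{\mathbb{A}^1}$ is stable, pushouts coincide with pullbacks, so the same description could be phrased with pullback squares if preferred.

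The only step needing care is the first bullet of the second step, namely that Lemma \ref{desc} really exhibits the cosheaf category as an accessible Bousfield localization at a small set, so that \cite[Proposition 5.5.4.20]{HTT} applies with an arbitrary presentable target. The set $\mathcal{S}$ is small because $Sm^{\on{fp}}(S)$ is essentially small. Localization at it is exactly the left adjoint $\on{L}^\vee_{\on{Nis},\mathbb{A}^1}$ computed in Lemma \ref{desc}, so this check is routine, and no further obstacle is expected.
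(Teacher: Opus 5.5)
Your proposal is correct and is essentially the paper's argument. The paper likewise uses Lemma \ref{desc} to present the cosheaf category as the $\mathcal{S}$-local objects of $\mathcal{P}sh(Sm^{\on{fp}}(S)^{op})$, then repeats the proof of Lemma \ref{id2} (spectral Yoneda, \cite[Proposition 5.5.4.20]{HTT}, and the check that $\on{Lan}F$ inverts $\mathcal{S}$ exactly when $F$ satisfies the three conditions); your map-by-map translation supplies the verification the paper leaves implicit. One fine point, which the paper shares: locality must be taken spectrally (i.e.\ with $\mathcal{S}$ closed under desuspensions) for the local objects to be exactly the cosheaves, but this does not affect your essential-image computation, since colimit-preserving functors into a stable target commute with shifts.
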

\begin{rmk}
Again, the above computations rely essentially on that we're working with stable-category-valued sheaves.
\end{rmk}
To extend the universal property described in Lemma \ref{coShv.universal.property} to the formal inversion 
\[co\mathcal{S}hv_{\on{coNis},co{\text -}\mathbb{A}^1}(Sm^{\on{fp}}(S))[((\mathbb{P}^1)^{dual})^{-1}],\] we notice first that $(\mathbb{P}^1)^{dual}$ admits the following internal characterization:
\begin{lm}\label{description}
$(\mathbb{P}^1)^{dual}\in co\mathcal{S}hv_{\on{coNis},co{\text -}\mathbb{A}^1}(Sm^{\on{fp}}(S))$ identifies with $fib(\on{L}\circ j(\mathbb{P}^1)\xrightarrow{\on{L}\circ j(\infty)}\on{L}\circ j(S))$, where the fiber is induced as in the following diagram:
\[\begin{tikzcd}
	{Sm^{\operatorname{fp}}(S)^{op}} && {S\xrightarrow{\infty}\mathbb{P}^1} \\
	{\mathcal{P}sh(Sm^{\operatorname{fp}}(S)^{op})} && {j(\mathbb{P}^1)\xrightarrow{j(\infty)}j(S)} \\
	{co\mathcal{S}hv_{\operatorname{coNis},co{\text -}\mathbb{A}^1}(Sm^{\operatorname{fp}}(S))} && {\operatorname{L}\circ j(\mathbb{P}^1)\xrightarrow{\operatorname{L}\circ j(\infty)}\operatorname{L}\circ j(S)}
	\arrow["j", from=1-1, to=2-1]
	\arrow[maps to, from=1-3, to=2-3]
	\arrow["{\operatorname{L}_{\operatorname{coNis},\operatorname{co}{\text -}\mathbb{A}^1}}", from=2-1, to=3-1]
	\arrow[maps to, from=2-3, to=3-3].
\end{tikzcd}\]
\end{lm}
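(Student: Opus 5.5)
We identify $(\mathbb{P}^1)^{dual}$ by computing what functor $\mathcal{C}\to\on{Sp}$ it corresponds to, where $\mathcal{C}:=\mathcal{S}hv_{\on{Nis},\mathbb{A}^1}(Sm^{\on{fp}}(S))$. Here $\mathbb{P}^1$ means the pointed object $(\mathbb{P}^1,\infty)$, i.e. $\mathbb{P}^1$ is the cofiber of $\on{L}y(\infty):\on{L}y(S)\to\on{L}y(\mathbb{P}^1)$ in the stable category $\mathcal{C}$. This object is compact, since $\on{L}$ preserves compacts and representables are compact.

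By Remark \ref{dual.comp}, $\mathcal{C}$ is compactly generated, so $\mathcal{C}^\vee\simeq\on{Ind}((\mathcal{C}^\omega)^{op})$. Under this equivalence $(\mathbb{P}^1)^{dual}$ corresponds to $j(\mathbb{P}^1)$, equivalently to the corepresented functor $\on{Map}_{\mathcal{C}}(\mathbb{P}^1,-)\in\on{Fun}^L(\mathcal{C},\on{Sp})$. The Yoneda embedding $(\mathcal{C}^\omega)^{op}\to\mathcal{C}^\vee$ is exact, and passing to opposites turns cofibers into fibers. Hence
\[(\mathbb{P}^1)^{dual}\simeq fib\bigl(\on{Map}_{\mathcal{C}}(\on{L}y(\mathbb{P}^1),-)\to\on{Map}_{\mathcal{C}}(\on{L}y(S),-)\bigr),\]
where the map is precomposition with $\on{L}y(\infty)$. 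Thus it suffices to show that the assignment $X\mapsto\on{Map}_{\mathcal{C}}(\on{L}y(X),-)$, viewed in $co\mathcal{S}hv$, is naturally equivalent to $\on{L}\circ j(X)$, with $\on{L}=\on{L}_{\on{coNis},\on{co}\text{-}\mathbb{A}^1}$.

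\textbf{Identifying corepresentables.} We transport $\on{Map}_{\mathcal{C}}(\on{L}y(X),-)$ through the identifications of Lemma \ref{id2} and Lemma \ref{desc}. As a functor on $Sm^{\on{fp}}(S)$, it sends $U$ to $\on{Map}_{\mathcal{C}}(\on{L}y(X),\on{L}y(U))$. On the other hand, $j(X)\in\mathcal{P}sh(Sm^{\on{fp}}(S)^{op})$ is the corepresentable $Sm^{\on{fp}}(S)(X,-)$, suspended into spectra. This corresponds under Lemma \ref{id1} to the evaluation functor $\on{ev}_X=\on{Map}_{\mathcal{P}sh}(y(X),-)$.

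By the description of duals of functors in Section \ref{compactibility}, $\on{L}^\vee$ is given by precomposition with the right adjoint inclusion $\iota:\mathcal{C}\hookrightarrow\mathcal{P}sh(Sm^{\on{fp}}(S))$. By Lemma \ref{desc}, $\on{L}^\vee$ is exactly the localization $\on{L}_{\on{coNis},\on{co}\text{-}\mathbb{A}^1}$. Therefore
\[\on{L}\circ j(X)\simeq \on{ev}_X\circ\iota=\on{Map}_{\mathcal{P}sh}(y(X),\iota(-))\simeq\on{Map}_{\mathcal{C}}(\on{L}y(X),-)\]
by adjunction, naturally in $X$.

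Applying this naturality to $\infty:S\to\mathbb{P}^1$ shows that $\on{L}\circ j(\infty)$ corresponds to precomposition with $\on{L}y(\infty)$. Taking fibers then gives the claim.

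\textbf{Main obstacle.} The delicate step is the bookkeeping of variance and compatibility. One must check that the identification $X^{dual}\leftrightarrow j(X)$ of Remark \ref{dual.comp} agrees with the identification of $\mathcal{C}^\vee$ as a localization of $\mathcal{P}sh(Sm^{\on{fp}}(S)^{op})$ used in Lemma \ref{desc}. This requires that the dual of $\on{L}$ is the left adjoint of $-\circ\on{L}$, and that it sends $\on{ev}_X$ to $\on{ev}_X\circ\iota$. The left adjoint of $-\circ\on{L}$ is precomposition with $\iota$, which holds because $\on{L}$ is strongly left adjoint, i.e. $\iota$ preserves filtered colimits. It is this fact that makes $\on{ev}_X\circ\iota$ colimit-preserving on the compact-generator level. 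A secondary point is to keep the fiber versus cofiber orientation straight when passing through $(\mathcal{C}^\omega)^{op}$.
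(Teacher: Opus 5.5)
Your proposal is correct and is essentially the paper's argument: both dualize the cofiber presentation of $(\mathbb{P}^1,\infty)$ into a fiber, use that $\on{L}^\vee$ is precomposition with the right adjoint $\iota$ (identified with $\on{L}_{\on{coNis},\on{co}{\text -}\mathbb{A}^1}$ via Lemma \ref{desc}), and use that duals of representables are corepresentables. The only difference is bookkeeping. The paper takes the cofiber in $\mathcal{P}sh(Sm^{\on{fp}}(S))$, dualizes there, and then pushes the fiber through the exact functor $\on{L}^\vee$. You instead take the cofiber directly in $\mathcal{C}$ and check $\on{L}^\vee(\on{ev}_X)\simeq\on{Map}_{\mathcal{C}}(\on{L}y(X),-)$ by adjunction, which makes explicit the compatibility $(\on{L}c)^{dual}\simeq\on{L}^\vee(c^{dual})$ that the paper uses implicitly.
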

\begin{proof}
By definition, the object $\mathbb{P}^1\in\mathcal{S}hv_{\on{Nis},\mathbb{A}^1}(Sm^{\on{fp}}(S))$ corresponds to a left adjoint functor\[
\on{L}_{\on{Nis},\mathbb{A}^1}\circ (\mathbb{P}^1,\infty):\on{Sp}\rightarrow\mathcal{P}sh(Sm^{\on{fp}}(S))\rightarrow\mathcal{S}hv_{\on{Nis},\mathbb{A}^1}(Sm^{\on{fp}}(S)).\]
On the other hand, $(\mathbb{P}^1,\infty)\in\mathcal{P}sh(Sm^{\on{fp}}(S))$ is equivalent to the cofiber of\[
j(S)\xrightarrow{j(\infty)}j(\mathbb{P}^1).\]
Combining them together, we have\[
(\mathbb{P}^1)^{dual}\simeq\on{L}_{\on{coNis},\on{co}{\text -}\mathbb{A}^1}(fib(j(\mathbb{P}^1)^{dual}\rightarrow j(S)^{dual})).\]
Now the claim follows that $\on{L}_{\on{coNis},\on{co}{\text -}\mathbb{A}^1}$ commutes with finite limits and the next lemma.
\end{proof}
\begin{lm}
Let $\mathcal{C}$ be a small, stable category and let $X$ be an object in $\mathcal{C}$. We denote by $j(X)$ the Yoneda embedding of $X$ in $\mathcal{P}sh(\mathcal{C})$. Since $j(X)$ is compact, it corresponds to an object $j(X)^{dual}$ in the dual category. Regarded as a functor, \[j(X)^{dual}:\mathcal{C}\rightarrow\on{Sp}\]
is simply the covariant representable functor\[
\on{Map}_{\mathcal{C}}(X,-).\]
\end{lm}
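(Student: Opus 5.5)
We unwind the definition of $j(X)^{dual}$ given in Section \ref{compactibility}. The compact object $j(X)\in\mathcal{P}sh(\mathcal{C})$ determines the strongly left adjoint functor $\on{Sp}\to\mathcal{P}sh(\mathcal{C})$, $E\mapsto E\otimes j(X)$. The dual of a strongly left adjoint functor is precomposition with its right adjoint. So the dual functor
\[\on{Sp}\simeq\on{Sp}^\vee\to\mathcal{P}sh(\mathcal{C})^\vee\simeq\on{Fun}^L(\mathcal{P}sh(\mathcal{C}),\on{Sp})\]
sends $\mathbb{S}$ to the right adjoint of $-\otimes j(X)$, viewed as a colimit-preserving functor $\mathcal{P}sh(\mathcal{C})\to\on{Sp}$. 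That right adjoint is $F\mapsto\on{Map}_{\mathcal{P}sh(\mathcal{C})}(j(X),F)$, and by the spectral Yoneda lemma this equals $F(X)$.

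Thus $j(X)^{dual}$ is the evaluation functor $\on{ev}_X:\mathcal{P}sh(\mathcal{C})\to\on{Sp}$. It preserves colimits because colimits of presheaves are computed pointwise, which confirms that $j(X)$ is compact. This is the content of Remark \ref{dual.comp} applied to $\mathcal{P}sh(\mathcal{C})$, which is compactly generated by representables.

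Next we transport $\on{ev}_X$ along the identification $\on{Fun}^L(\mathcal{P}sh(\mathcal{C}),\on{Sp})\simeq\on{Fun}^{ex}(\mathcal{C},\on{Sp})$ used in Lemma \ref{id1}. This identification is restriction along the Yoneda embedding $j$. Restricting $\on{ev}_X$ along $j$ gives the functor $Y\mapsto j(Y)(X)=\on{Map}_{\mathcal{C}}(X,Y)$, since $j(Y)$ is the presheaf $\on{Map}_{\mathcal{C}}(-,Y)$. This is exactly the covariant representable functor $\on{Map}_{\mathcal{C}}(X,-)$. It is exact because $\mathcal{C}$ is stable, so it does land in the right category. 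In Lemma \ref{id1} the paper applies this to the non-stable $Sm^{\on{fp}}(S)$; there one reads "exact" as "arbitrary functor", and the argument is the same.

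The only delicate point is the convention for the equivalence $\on{Sp}\simeq\on{Sp}^\vee$ and for the duality on functors. One must check that dualizing the inclusion of $j(X)$ really gives precomposition with the right adjoint, and not, for example, the left adjoint of precomposition. The paper already records these as equivalent descriptions of $L^\vee$. So the remaining verification is that the composite $\on{Sp}\to\on{Sp}^\vee\to\mathcal{C}^\vee$ evaluated at $\mathbb{S}$ gives $R$ itself, with no extra twist. This holds because $\on{Sp}^\vee\simeq\on{Fun}^L(\on{Sp},\on{Sp})$ sends $\mathbb{S}$ to the identity, and precomposing the identity with $R$ yields $R$.
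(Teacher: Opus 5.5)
Your proposal is correct and follows the paper's route. In both, $j(X)^{dual}$ is the right adjoint of $-\otimes j(X)\colon \on{Sp}\to\mathcal{P}sh(\mathcal{C})$, which is evaluation at $X$, and under the restriction/left Kan extension equivalence this corresponds to $\on{Map}_{\mathcal{C}}(X,-)$; your Yoneda computation and your check of the normalization $\on{Sp}\simeq\on{Sp}^\vee$ just spell out what the paper's one-line proof leaves implicit. One small convention point: with the paper's $\mathcal{P}sh(\mathcal{C})=\on{Fun}(\mathcal{C}^{op},\on{Sp})$ and $j$ the stabilized Yoneda embedding $\Sigma^\infty_+\circ y$, the restriction equivalence lands in all of $\on{Fun}(\mathcal{C},\on{Sp})$, and your $\on{Fun}^{ex}$ version is the one for $\on{Ind}(\mathcal{C})$; the argument is otherwise identical.
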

\begin{proof}
By definition, $j(X)^{dual}\in\on{Fun}^L(\mathcal{P}sh(\mathcal{C}),\on{Sp})$ is the right adjoint of the functor\[
X:\on{Sp}\rightarrow\mathcal{P}sh(\mathcal{C}),\]
which is the left adjoint of the left Kan extension of\[
\on{Map}_{\mathcal{C}}(X,-):\mathcal{C}\rightarrow\on{Sp}.\]
\end{proof}
Combining Lemma \ref{coShv.universal.property} and the Lemma \ref{description}, we get the desired universal property of $\mathcal{SH}(S)^\vee$.
\begin{prop}\label{cons}
The composition $\on{L}_{(\mathbb{P}^1)^{dual}}\circ\on{L}_{\on{coNis},\on{co}{\text -}\mathbb{A}^1}\circ j$ induces, for each presentable, stable category $\mathcal{D}$, a fully faithful functor\[
\on{Fun}^L(\mathcal{SH}(S)^\vee,\mathcal{D})\rightarrow\on{Fun}(Sm^{\on{fp}}(S)^{op},\mathcal{D})
\]
with the essential image spanned by $\mathbb{A}^1$-invariant functors $F$ that preserve terminal object, send Nisnevich squares to pushouts, and send the fiber\[fib(F(\mathbb{P}^1)\xrightarrow{F(\infty)}F(S))\]
to an invertible object.

Moreover, since the symmetric monoidal structures are compatible, the claim remains valid for symmetric monoidal category $\mathcal{D}$ and symmetric monoidal functors.

In particular, we have\[\on{Fun}^{L,\otimes}(\mathcal{SH}(S)^\vee,\on{Mot}(S)_{\on{loc}}^{\mathbb{A}^1})\simeq\on{Fun}^{\otimes}_{\on{Nis},\mathbb{A}^1,\mathbb{P}^1}(Sm^{\on{fp}}(S)^{op},\on{Mot}(S)_{\on{loc}}^{\mathbb{A}^1})
\]
\end{prop}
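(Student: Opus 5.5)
The plan is to build the universal property in three layers, following the diagram above, and compose the resulting fully faithful restrictions. First, the spectral Yoneda embedding gives an equivalence $\on{Fun}^L(\mathcal{P}sh(Sm^{\on{fp}}(S)^{op}),\mathcal{D})\simeq\on{Fun}(Sm^{\on{fp}}(S)^{op},\mathcal{D})$ for any presentable stable $\mathcal{D}$. Second, since $co\mathcal{S}hv_{\on{coNis},co{\text -}\mathbb{A}^1}(Sm^{\on{fp}}(S))$ is the Bousfield localization at the set $\mathcal{S}$ of Lemma \ref{desc}, \cite[Proposition 5.5.4.20]{HTT} identifies $\on{Fun}^L$ out of it with the colimit-preserving functors inverting $\mathcal{S}$. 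Exactly as in Lemma \ref{coShv.universal.property} (whose proof works for arbitrary presentable stable $\mathcal{D}$, not just $\on{Mot}(S)^{\mathbb{A}^1}_{\on{loc}}$), these correspond under Yoneda to $F$ that are $\mathbb{A}^1$-invariant, send $\emptyset$ to $0$, and send Nisnevich squares to pushouts. Stability of $\mathcal{D}$ is used here, because it lets us read the inverted map $j(U)\amalg_{j(X)}j(V)\to j(W)$ as a pushout condition on $F$.

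Third, I would pass to the formal inversion. By Proposition \ref{formal.inversion.prop} we have $\mathcal{SH}(S)^\vee\simeq co\mathcal{S}hv[((\mathbb{P}^1)^{dual})^{-1}]$ in $\on{CAlg}(\PrL)$. In the symmetric monoidal case, Lemma \ref{lm.stable.universal.property} gives $\on{Fun}^{L,\otimes}(co\mathcal{S}hv[((\mathbb{P}^1)^{dual})^{-1}],\mathcal{D})\hookrightarrow\on{Fun}^{L,\otimes}(co\mathcal{S}hv,\mathcal{D})$, fully faithful with image the functors sending $(\mathbb{P}^1)^{dual}$ to an invertible object. Lemma \ref{description} identifies $(\mathbb{P}^1)^{dual}$ with $fib(\on{L}j(\mathbb{P}^1)\to\on{L}j(S))$. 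A left adjoint $G$ preserves finite limits because everything is stable, so $G((\mathbb{P}^1)^{dual})\simeq fib(F(\mathbb{P}^1)\xrightarrow{F(\infty)}F(S))$, where $F=G\circ\on{L}\circ j$. This gives the stated image. Combining with the monoidal version of the first two steps (Day convolution on $\mathcal{P}sh(Sm^{\on{fp}}(S)^{op})$, with the localization compatible with $\otimes$ because $\mathcal{S}$ is stable under tensoring with representables) yields the final displayed equivalence.

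The main obstacle is the non-monoidal statement for a bare presentable stable $\mathcal{D}$. Robalo's universal property is formulated for commutative algebras under $\mathcal{C}$, so "sending $X^{dual}$ to an invertible object" has no meaning for a plain left adjoint into an unstructured $\mathcal{D}$. My first approach would be to read "invertible" as: $-\otimes(\mathbb{P}^1)^{dual}$ acts invertibly on the induced $co\mathcal{S}hv$-module structure. I would then use the telescope model of Proposition \ref{tel}, which applies because $(\mathbb{P}^1)^{dual}$ is symmetric, being the dual of the symmetric object $\mathbb{P}^1$. This model presents $\mathcal{SH}(S)^\vee$ as $colim(\cdots\to co\mathcal{S}hv\xrightarrow{(\mathbb{P}^1)^{dual}}co\mathcal{S}hv)$ in $\on{Mod}_{co\mathcal{S}hv}(\PrL)$. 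Consequently, $co\mathcal{S}hv$-linear left adjoints out of it are exactly the linear functors on which $(\mathbb{P}^1)^{dual}$ acts invertibly, and this gives full faithfulness. The primary content is the symmetric monoidal version, so I would state that precisely and treat the plain version via this module interpretation.
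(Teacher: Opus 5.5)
Your argument for the symmetric monoidal statement is the paper's own. The paper's proof consists of combining Lemma \ref{coShv.universal.property} and Lemma \ref{description} with the formal-inversion universal property (Lemma \ref{lm.stable.universal.property}). You carry out the same steps in more detail, including the use of exactness to get $G((\mathbb{P}^1)^{dual})\simeq fib(F(\mathbb{P}^1)\to F(S))$.

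Your unease about a bare presentable stable $\mathcal{D}$ is justified, and the problem goes beyond interpretation: the full faithfulness itself fails there.

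\begin{itemize}
\item Identify $\on{Fun}^L(\mathcal{SH}(S)^\vee,\on{Sp})\simeq\mathcal{SH}(S)$ and $\on{Fun}(Sm^{\on{fp}}(S)^{op},\on{Sp})\simeq\mathcal{P}sh(Sm^{\on{fp}}(S))$. Under these, precomposition with $\on{L}_{(\mathbb{P}^1)^{dual}}\circ\on{L}_{\on{coNis},\on{co}{\text -}\mathbb{A}^1}\circ j$ is the right adjoint of the dual functor $\Sigma^\infty_{\mathbb{P}^1}\circ\on{L}_{\on{Nis},\mathbb{A}^1}$. Concretely, it is $E\mapsto\on{Map}_{\mathcal{SH}(S)}(\Sigma^\infty_+(-),E)$.
\item Over a field, this functor sends the nonzero object $\Sigma^{-1}_{\mathbb{P}^1}H\mathbb{Z}$ to $0$, because its values are motivic cohomology groups of weight $-1$.
\item Hence for $\mathcal{D}=\on{Sp}$ the functor in the statement is not conservative, so it is not fully faithful.
\end{itemize}

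Your telescope argument does not rescue this. It only gives $\on{Fun}^L_{co\mathcal{S}hv}(\mathcal{SH}(S)^\vee,\mathcal{M})\simeq\mathcal{M}$ for $co\mathcal{S}hv$-modules $\mathcal{M}$ on which $(\mathbb{P}^1)^{dual}$ already acts invertibly. That is a different statement, and it says nothing about $\on{Fun}(Sm^{\on{fp}}(S)^{op},\mathcal{D})$ for a plain $\mathcal{D}$. So you should state and prove only the symmetric monoidal version, which is all that Proposition \ref{ver.} uses. Treat the plain-$\mathcal{D}$ clause as an error in the statement, not as something your module reinterpretation recovers.
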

\begin{prop}\label{ver.}
The functor\[
\begin{aligned}
\phi: Sm^{\on{fp}}(S)^{op} &\rightarrow \on{Mot}(S)_{\on{loc}}^{\mathbb{A}^1}\\
 X & \mapsto \mathcal{U}(\on{perf}_X)
 \end{aligned}\]
satisfies all the conditions in the Proposition \ref{cons}. In particular, it extends to a symmetric monoidal left adjoint functor \[ \Phi:\mathcal{SH}(S)^\vee \rightarrow \on{Mot}(S)_{\on{loc}}^{\mathbb{A}^1}.
\]
\end{prop}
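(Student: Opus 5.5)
We verify the four conditions of Proposition \ref{cons} for $\phi(X)=\mathcal{U}(\on{perf}_X)$, together with symmetric monoidality; the extension $\Phi$ then comes from the symmetric monoidal form of Proposition \ref{cons}.

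\emph{Monoidality and the terminal object.} For $X,Y\in Sm^{\on{fp}}(S)$ we have $\on{perf}_{X\times_S Y}\simeq\on{perf}_X\otimes_{\on{perf}_S}\on{perf}_Y$. This is Toën's / Ben-Zvi–Francis–Nadler's tensor product formula for perfect stacks, and it applies since qcqs schemes are perfect. Hence $X\mapsto\on{perf}_X$ is symmetric monoidal $Sm^{\on{fp}}(S)^{op}\to\mathsf{Cat}^{\on{perf}}(\on{perf}_S)$, with unit $S\mapsto\on{perf}_S$. Composing with the symmetric monoidal $\mathcal{U}$ of Lemma \ref{u.loc.inv.} (in its $\mathbb{A}^1$-localized form) gives a symmetric monoidal $\phi$. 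Moreover $\on{perf}_\emptyset=0$, and $\mathcal{U}$ preserves zero objects, so $\phi(\emptyset)\simeq0$.

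\emph{$\mathbb{A}^1$-invariance.} We have $\on{perf}_{X\times\mathbb{A}^1}\simeq\on{perf}_X\otimes_{\on{perf}_S}\on{perf}_{\mathbb{A}^1}$. The map $\mathcal{U}(\on{perf}_X)\to\mathcal{U}(\on{perf}_{\mathbb{A}^1}\otimes\on{perf}_X)$ is inverted by the definition of $\mathcal{S}^{\mathbb{A}^1}_{\on{loc}}$. Strictly, that definition only covers compact $\mathcal{C}$; for general $\on{perf}_X$ we write it as a filtered colimit of compacts and use that $\mathcal{U}$ and the tensor product commute with filtered colimits.

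\emph{Nisnevich squares.} Take a Nisnevich square with $U\subset X$ open, closed complement $Z$, and $p:V\to X$ étale with $V\times_XZ\simeq Z$. Thomason–Trobaugh give Karoubi sequences
\[\on{perf}_{X\text{ on }Z}\to\on{perf}_X\to\on{perf}_U,\qquad \on{perf}_{V\text{ on }Z}\to\on{perf}_V\to\on{perf}_W,\]
and excision gives $p^\ast:\on{perf}_{X\text{ on }Z}\simeq\on{perf}_{V\text{ on }Z}$. These are sequences of $\on{perf}_S$-linear categories. Applying the localizing invariant $\mathcal{U}$, the two cofiber sequences have equivalent fibers, so the square $\phi(X)\to\phi(U),\phi(V)\to\phi(W)$ is cartesian, hence cocartesian by stability. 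This is exactly what is needed.

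\emph{The $\mathbb{P}^1$-condition.} This is the main step. By Beilinson's semiorthogonal decomposition $\on{perf}_{\mathbb{P}^1_S}=\langle\on{perf}_S\otimes\mathcal{O}(-1),\on{perf}_S\otimes\mathcal{O}\rangle$, so $\mathcal{U}(\on{perf}_{\mathbb{P}^1})\simeq\mathcal{U}(\on{perf}_S)\oplus\mathcal{U}(\on{perf}_S)$. The pullback $\infty^\ast$ restricts to an equivalence on each summand, since both $\mathcal{O}$ and $\mathcal{O}(-1)$ pull back to $\mathcal{O}_S$. Therefore $fib(\phi(\mathbb{P}^1)\to\phi(S))$ is identified with one copy of the unit $\mathcal{U}(\on{perf}_S)$, which is invertible. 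The obstacle is making this identification precise: one must check that the induced map $\mathcal{U}(\on{perf}_S)^{\oplus2}\to\mathcal{U}(\on{perf}_S)$ is the fold map (up to automorphism), so its fiber is $\mathcal{U}(\on{perf}_S)$. I would do this by computing $\infty^\ast$ on the generators $\mathcal{O},\mathcal{O}(-1)$ via additivity, since $\mathcal{U}$ sends semiorthogonal decompositions to direct sums. With all conditions verified, the symmetric monoidal version of Proposition \ref{cons} yields $\Phi$.
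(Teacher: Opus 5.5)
Your proof is correct and follows the paper's outline. You verify monoidality of $X\mapsto\mathcal{U}(\on{perf}_X)$, $\phi(\emptyset)=0$, $\mathbb{A}^1$-invariance from the definition of $\mathcal{S}^{\mathbb{A}^1}_{\on{loc}}$, and Nisnevich excision via Thomason--Trobaugh localization sequences with equivalent fibers. The Nisnevich step is exactly the paper's argument. For monoidality and $\mathbb{A}^1$-invariance you supply the tensor product formula and the filtered-colimit reduction that the paper only asserts. One small citation point: Ben-Zvi--Francis--Nadler's perfect stacks are required to have affine diagonal, so for general qcqs $X$ you want Lurie's version of the tensor product formula in SAG.

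The real difference is the $\mathbb{P}^1$-condition. The paper invokes a cofiber sequence $\on{perf}_S\to\on{perf}_{\mathbb{P}^1}\xrightarrow{\infty^\ast}\on{perf}_S$ in $\mathsf{Cat}^{\on{perf}}$ and applies $\mathcal{U}$. Taken literally, this is not a Karoubi sequence, because $\infty^\ast$ is not a Verdier projection. Its kernel consists of perfect complexes supported away from the section $\infty$, and the quotient by that kernel is perfect complexes on the germ of $\mathbb{P}^1_S$ along $\infty$; over a field, $\on{End}(\mathcal{O})$ there is the local ring at $\infty$, not $k$. The Verdier projection whose kernel is $\pi^\ast\on{perf}_S\otimes\mathcal{O}(-1)$ is $\pi_\ast$. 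Under Beilinson's splitting, $\mathcal{U}(\pi_\ast)$ and $\mathcal{U}(\infty^\ast)$ are the different maps $(0,\on{id})$ and $(\on{id},\on{id})$ from $\mathcal{U}(\on{perf}_S)^{\oplus 2}$ to $\mathcal{U}(\on{perf}_S)$, which merely have equivalent fibers.

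Your route through the semiorthogonal decomposition and additivity is what actually justifies the conclusion. The step you flag as the obstacle is immediate. The $\on{perf}_S$-linear equivalences $\infty^\ast\pi^\ast\simeq\on{id}$ and $\infty^\ast(\pi^\ast(-)\otimes\mathcal{O}(-1))\simeq\on{id}$, the latter from $\infty^\ast\mathcal{O}(-1)\simeq\mathcal{O}_S$, make $\mathcal{U}(\infty^\ast)$ literally the fold map. Its fiber is the unit, embedded via $(\on{id},-\on{id})$.
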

\begin{proof}
\begin{itemize}
\item Compatibility of symmetric monoidal structures.\\
Both of the $S$-linear functors\[
\begin{gathered}
\on{perf}:Sm(S)^{op}\rightarrow \mathsf{Cat}^{\on{perf}}(\on{perf}_S)\\
\mathcal{U}:\mathsf{Cat}^{\on{perf}}(\on{perf}_S)\rightarrow\on{Mot}(S)_{\on{loc}}^{\mathbb{A}^1}
\end{gathered}
\]
are symmetric monoidal. So is their composition.
\item Nisnevich excision.\\
It is obvious that the empty scheme is sent to $\mathcal{U}(\on{perf}_{\emptyset})=0$. On the other hand, recall that the functor\[
\on{perf}:Sm(S)^{op}\rightarrow \mathsf{Cat}^{\on{perf}}(\on{perf}_S)\]
satisfies fppf descent. In particular, for any Nisvevich square
\[\begin{tikzcd}
	W && V \\
	\\
	U && X
	\arrow[hook, from=1-1, to=1-3]
	\arrow[from=1-1, to=3-1]
	\arrow[from=1-3, to=3-3]
	\arrow[hook, from=3-1, to=3-3],
\end{tikzcd}\]
we have a pullback diagram
\[\begin{tikzcd}
	{\on{perf}_X} && {\on{perf}_U} \\
	\\
	{\on{perf}_V} && {\on{perf}_W}
	\arrow[from=1-1, to=1-3]
	\arrow[from=1-1, to=3-1]
	\arrow[from=1-3, to=3-3]
	\arrow[from=3-1, to=3-3].
\end{tikzcd}\]
For an open immersion $U\hookrightarrow X$, if we denote by $\on{perf}_{X,X-U}$ the fiber\[
\on{perf}_{X,X-U}\rightarrow\on{perf}_X\rightarrow\on{perf}_U,\]
then we have a commutative diagram
\[\begin{tikzcd}
	{\on{perf}_{X,X-U}} && {\on{perf}_X} && {\on{perf}_U} \\
	\\
	{\on{perf}_{V,V-W}} && {\on{perf}_V} && {\on{perf}_W}
	\arrow[from=1-1, to=1-3]
	\arrow["\simeq"', from=1-1, to=3-1]
	\arrow[from=1-3, to=1-5]
	\arrow[from=1-3, to=3-3]
	\arrow[from=1-5, to=3-5]
	\arrow[from=3-1, to=3-3]
	\arrow[from=3-3, to=3-5].
\end{tikzcd}\]
Since $\mathcal{U}$ is the universal localizing invariant, applying $\mathcal{U}$ gives a commutative diagram in which, again, the horizontal sequences are fibers and the left most arrow is an equivalence. As a consequence,
\[\begin{tikzcd}
	{\mathcal{U}(\on{perf}_X)} && {\mathcal{U}(\on{perf}_U)} \\
	\\
	{\mathcal{U}(\on{perf}_V)} && {\mathcal{U}(\on{perf}_W)}
	\arrow[from=1-1, to=1-3]
	\arrow[from=1-1, to=3-1]
	\arrow[from=1-3, to=3-3]
	\arrow[from=3-1, to=3-3]
\end{tikzcd}\]
is a pullback as desired.
\item $\mathbb{A}^1$-invariance.\\
This is true by impose.
\item $\mathbb{P}^1$-invertibility.\\
We have a cofiber sequence:
\[\begin{tikzcd}
	{\on{perf}_S} && {\on{perf}_{\mathbb{P}^1}} && {\on{perf}_S}
	\arrow[from=1-1, to=1-3]
	\arrow["\on{perf}_\infty", from=1-3, to=1-5].
\end{tikzcd}\]
And $\mathcal{U}$ sends it to a both fiber and cofiber sequence. In particular, the fiber of \[\phi(\mathbb{P}^1\xrightarrow{\infty} S)\] is the unit, thus is invertible.
\end{itemize}
\end{proof}

\subsection{Comparison between $\mathcal{MS}^\vee$ and $\mathsf{Mot}_\mathsf{loc}$}
Throughout this section, we fix $S$ to be a qcqs scheme. We will give a non-$\mathbb{A}^1$-invariant analogous story in this section. That is, we want to compute $\mathcal{MS}^\vee$ and then extend the natural functor 
\[\begin{aligned}
\psi: Sm^{\on{fp}}(S)^{op} &\rightarrow \on{Mot}(S)_{\on{loc}}\\
X&\mapsto \mathcal{U}(\on{perf}_X)
\end{aligned}\]
to a functor\[\Psi:\mathcal{MS}(S)^{\vee}\rightarrow\on{Mot}(S)_{\on{loc}}.\]

Since all the arguments in this section work mutatis mutandis as in the last section, we will mainly present only the claims in this section.
\subsubsection{Computation of $\mathcal{MS}^\vee$}
Recall that $\mathcal{MS}(S)$ is defined from the spectra-valued presheaf category $\mathcal{P}sh(Sm^{\on{fp}}(S))$ by first localizing with respect to Nisnevich excision and elementary blowup excision, and then inverting $\mathbb{P}^1$. And each step gives a compactly generated, presentable, stable category. We want to compute the image of the sequence\[
\mathcal{P}sh(Sm^{\on{fp}}(S))\xrightarrow{\on{L}_{\on{Nis},\on{ebu}}}\mathcal{P}sh_{\on{Nis},\on{ebu}}(Sm^{\on{fp}}(S))\xrightarrow{\on{L}_{\mathbb{P}^1}}\mathcal{P}sh_{\on{Nis},\on{ebu}}(Sm^{\on{fp}}(S))[(\mathbb{P}^1)^{-1}]
\]
in $\PrL_{cg}$ under the functor $(-)^{\vee}:\PrL_{cg}\rightarrow\PrL_{cg}$.
\begin{lm}\label{id1`}
The dual of $\mathcal{P}sh(Sm^{\on{fp}}(S))$ is $\mathcal{P}sh(Sm^{\on{fp}}(S)^{op})$.
\end{lm}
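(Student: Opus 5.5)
The proof will be the same direct computation as in Lemma \ref{id1}. Everything happens in $\PrL_{st}$, where the dual of a dualizable object $\mathcal{C}$ is $\on{Fun}^L(\mathcal{C},\on{Sp})$. First we note that $\mathcal{P}sh(Sm^{\on{fp}}(S))$ is dualizable. It is compactly generated by the images of the representables under the spectral Yoneda embedding, so it lies in $\PrL_{cg}\subset\PrL_{ca}$. Here $Sm^{\on{fp}}(S)$ is essentially small, since $S$ is qcqs and we only consider smooth schemes of finite presentation (now derived, but still forming an essentially small $\infty$-category).

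Next we compute the dual. Since $\mathcal{P}sh(Sm^{\on{fp}}(S))=\on{Fun}(Sm^{\on{fp}}(S)^{op},\on{Sp})$ is the free presentable stable category on $Sm^{\on{fp}}(S)$, restriction along the spectral Yoneda embedding gives an equivalence
\[\on{Fun}^L(\mathcal{P}sh(Sm^{\on{fp}}(S)),\on{Sp})\simeq\on{Fun}(Sm^{\on{fp}}(S),\on{Sp}),\]
whose inverse is left Kan extension. By definition, the right-hand side is $\mathcal{P}sh(Sm^{\on{fp}}(S)^{op})$.

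As a cross-check, and to pin down the identification used later, we can also argue through Remark \ref{dual.comp} and the equivalence $\mathcal{C}^\vee\simeq\on{Ind}((\mathcal{C}^\omega)^{op})$. The compact objects of $\mathcal{P}sh(Sm^{\on{fp}}(S))$ are the idempotent-complete stable closure of the representables. Passing to opposites and Ind-completing returns the spectral presheaves on $Sm^{\on{fp}}(S)^{op}$, and under this identification $j(X)^{dual}$ corresponds to the corepresentable $\on{Map}(X,-)$.

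There is no real obstacle here. The only points needing care are that being derived changes nothing in the formal argument (we only use that $Sm^{\on{fp}}(S)$ is a small $\infty$-category) and that the symmetric monoidal structure is not part of this statement. The only input is the universal property of spectral presheaves.
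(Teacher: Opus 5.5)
Your proposal is correct and follows the same route as the paper, whose proof (by the ``mutatis mutandis'' reference to Lemma \ref{id1}) is exactly the chain $\mathcal{P}sh(Sm^{\on{fp}}(S))^\vee\simeq\on{Fun}^L(\mathcal{P}sh(Sm^{\on{fp}}(S)),\on{Sp})\simeq\on{Fun}(Sm^{\on{fp}}(S),\on{Sp})=\mathcal{P}sh(Sm^{\on{fp}}(S)^{op})$, using the universal property of spectral presheaves. Your extra remarks add two things the paper leaves implicit: compact generation, which justifies dualizability, and the cross-check via $\on{Ind}((\mathcal{C}^\omega)^{op})$, which is consistent with Remark \ref{dual.comp}; neither is needed for the argument.
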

\begin{lm}\label{id2`}
The canonical map $\on{L}_{\on{Nis},\on{ebu}}\circ y: Sm^{\on{fp}}(S)\rightarrow\mathcal{P}sh_{\on{Nis},\on{ebu}}(Sm^{\on{fp}}(S))$ induces a fully faithful map:\[
\on{Fun}^L(\mathcal{P}sh_{\on{Nis},\on{ebu}}(Sm^{\on{fp}}(S)),\on{Sp})\rightarrow\on{Fun}(Sm^{\on{fp}}(S),\on{Sp}).\]
The essential image is precisely the subcategory spanned by functors $F: Sm^{\on{fp}}(S)\rightarrow\on{Sp}$ sending empty scheme $\emptyset$ to the zero object, and sending Nisnevich squares and elementary blowup squares to pushouts. We denote the essential image by $\on{Fun}_{\on{Nis},\on{ebu}}(Sm^{\on{fp}}(S),\on{Sp})$.
\end{lm}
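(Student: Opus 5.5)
The plan is to follow the proof of Lemma \ref{id2} verbatim, replacing the $\mathbb{A}^1$-invariance condition by elementary blowup excision. First I will present $\mathcal{P}sh_{\on{Nis},\on{ebu}}(Sm^{\on{fp}}(S))$ as the full subcategory of $\mathcal{S}$-local objects of $\mathcal{P}sh(Sm^{\on{fp}}(S))$, where $\mathcal{S}$ is the small set consisting of:
\begin{itemize}
\item the map $0\rightarrow y(\emptyset)$;
\item for each Nisnevich square with corners $W,U,V,X$, the map $y(U)\amalg_{y(W)}y(V)\rightarrow y(X)$;
\item for each elementary blowup square, i.e.\ each elementary closed immersion $Z\hookrightarrow X$ with exceptional divisor $E$, the map $y(Z)\amalg_{y(E)}y(Bl_ZX)\rightarrow y(X)$.
\end{itemize}
This is the standard translation of excision into locality. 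Since $\mathcal{P}sh$ denotes spectra-valued presheaves, a presheaf $F$ is local against these maps exactly when $F(\emptyset)\simeq 0$ and $F$ sends the relevant squares to pullback squares. Lemma \ref{Sheaf.Theory.of.Nis.} handles the Nisnevich part over qcqs $S$, and the ebu part holds by the definition of $Psh_{\on{ebu}}$.

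Next I will use the equivalence $\on{res}:\on{Fun}^L(\mathcal{P}sh(Sm^{\on{fp}}(S)),\on{Sp})\leftrightarrows\on{Fun}(Sm^{\on{fp}}(S),\on{Sp}):\on{Lan}$ from the universal property of spectral presheaves. By \cite[Proposition 5.5.4.20]{HTT}, precomposition with $\on{L}_{\on{Nis},\on{ebu}}$ identifies $\on{Fun}^L(\mathcal{P}sh_{\on{Nis},\on{ebu}}(Sm^{\on{fp}}(S)),\on{Sp})$ fully faithfully with the colimit-preserving functors that invert $\mathcal{S}$. Composing with $\on{res}$ gives the fully faithful functor in the statement, which is restriction along $\on{L}_{\on{Nis},\on{ebu}}\circ y$.

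Finally I will identify the essential image. Since $\on{Lan}F$ preserves colimits and agrees with $F$ on representables, it sends $0\rightarrow y(\emptyset)$ to $0\rightarrow F(\emptyset)$. It sends each pushout map in $\mathcal{S}$ to the comparison map $F(U)\amalg_{F(W)}F(V)\rightarrow F(X)$, and likewise $F(Z)\amalg_{F(E)}F(Bl_ZX)\rightarrow F(X)$ for blowup squares. So $\on{Lan}F$ inverts $\mathcal{S}$ precisely when $F(\emptyset)\simeq 0$ and $F$ sends Nisnevich squares and elementary blowup squares to pushouts, which is the stated image.

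The main point needing care is the first step, namely that the ebu-localization is a localization at a small set of maps within $Sm^{\on{fp}}(S)$. This means checking that the blowup squares with $Z\hookrightarrow X$ elementary and finitely presented stay in $Sm^{\on{fp}}(S)$, so that $Bl_ZX$ and $E$ are smooth of finite presentation, and that they form an essentially small collection. I would cite \cite{AHI} for the fact that $Bl_ZX$ is smooth when $Z\hookrightarrow X$ is a closed immersion of smooth schemes, so these squares live in the site.
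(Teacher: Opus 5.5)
Your proposal is correct and is the paper's argument: the paper proves this lemma mutatis mutandis from Lemma \ref{id2}. That is, it presents the category as the $\mathcal{S}$-local objects for the empty-scheme maps, the Nisnevich maps, and (in place of the $\mathbb{A}^1$-projections) the elementary blowup maps, then identifies the essential image via $\on{res}\dashv\on{Lan}$ and \cite[Proposition 5.5.4.20]{HTT}, exactly as you do.

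One small precision, which the paper also glosses over. Your claim that ``$F$ is local iff $F(\emptyset)\simeq 0$ and the squares go to pullbacks'' is only literally true for spectrum-valued presheaves if you close $\mathcal{S}$ under all shifts $\Sigma^n$, $n\in\mathbb{Z}$ (or test locality on mapping spectra rather than mapping anima). This changes nothing in the functor category, since colimit-preserving functors to $\on{Sp}$ are exact and so invert $s$ if and only if they invert every $\Sigma^n s$.
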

\begin{nota}
We also call the dual of the category of presheaves satisfying certain excision conditions the category of presheaves satisfying corresponding coexcision conditions. In this case we use the notation\[
co\mathcal{P}sh_{\on{coNis},\on{coebu}}(Sm^{\on{fp}}(S)):=\mathcal{P}sh_{\on{Nis},\on{ebu}}(Sm^{\on{fp}}(S))^\vee.\]
\end{nota}
\begin{lm}\label{desc`}
Via the equivalences in Lemma \ref{id1`} and Lemma \ref{id2`}, the functor\[
\on{Fun}(Sm^{\on{fp}}(S),\on{Sp})\rightarrow\on{Fun}_{\on{Nis},\on{ebu}}(Sm^{\on{fp}}(S),\on{Sp})\]
induced by\[
\on{L}_{\on{Nis},\on{ebu}}^\vee:\on{Fun}^L(\mathcal{P}sh(Sm^{\on{fp}}(S)),\on{Sp})\rightarrow\on{Fun}^L(\mathcal{P}sh_{\on{Nis},\on{ebu}}(Sm^{\on{fp}}(S)),\on{Sp})\]
is a Bousfield localization with respect to a small set of maps $\mathcal{S}$.
\end{lm}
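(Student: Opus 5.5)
We follow the proof of Lemma \ref{desc} essentially verbatim, replacing the $\mathbb{A}^1$-invariance condition by elementary blowup excision. First we name the set $\mathcal{S}$ explicitly: it consists of the maps $Sm^{\on{fp}}(S)(\emptyset,-)\rightarrow 0$; for each Nisnevich square (with corners $W,U,V,X$ as in Lemma \ref{desc}) the map
\[Sm^{\on{fp}}(S)(U,-)\amalg_{Sm^{\on{fp}}(S)(X,-)}Sm^{\on{fp}}(S)(V,-)\rightarrow Sm^{\on{fp}}(S)(W,-);\]
and for each elementary closed immersion $Z\hookrightarrow X$ in $Sm^{\on{fp}}(S)$ with exceptional divisor $E\hookrightarrow Bl_ZX$ the map
\[Sm^{\on{fp}}(S)(Z,-)\amalg_{Sm^{\on{fp}}(S)(X,-)}Sm^{\on{fp}}(S)(Bl_ZX,-)\rightarrow Sm^{\on{fp}}(S)(E,-).\]
Up to isomorphism these squares form a small set, since $Sm^{\on{fp}}(S)$ is essentially small; hence the localization exists by \cite[Proposition 5.5.4.15]{HTT}.

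The first step is to identify $\mathcal{S}$-local objects. In the stable category $\on{Fun}(Sm^{\on{fp}}(S),\on{Sp})$ the covariant representables $Sm^{\on{fp}}(S)(Y,-)$ (suspension spectra of the mapping anima) corepresent evaluation at $Y$. So a functor $F$ is local with respect to the above maps exactly when $F(\emptyset)\simeq 0$ and $F$ carries each Nisnevich square and each elementary blowup square to a pullback. By stability, pullback squares and pushout squares coincide, so the $\mathcal{S}$-local objects are precisely $\on{Fun}_{\on{Nis},\on{ebu}}(Sm^{\on{fp}}(S),\on{Sp})$, the essential image described in Lemma \ref{id2`}.

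The second step is to identify the functor induced by $\on{L}_{\on{Nis},\on{ebu}}^\vee$ with the left adjoint of this inclusion. By the convention of Section \ref{compactibility}, $\on{L}^\vee_{\on{Nis},\on{ebu}}$ is left adjoint to precomposition
\[\on{L}_{\on{Nis},\on{ebu}}^\ast:\on{Fun}^L(\mathcal{P}sh_{\on{Nis},\on{ebu}}(Sm^{\on{fp}}(S)),\on{Sp})\rightarrow\on{Fun}^L(\mathcal{P}sh(Sm^{\on{fp}}(S)),\on{Sp}).\]
Transport this through Lemma \ref{id1`} (restriction along $y$) and Lemma \ref{id2`} (restriction along $\on{L}_{\on{Nis},\on{ebu}}\circ y$). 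Under these identifications, $\on{L}_{\on{Nis},\on{ebu}}^\ast$ becomes the inclusion $\on{Fun}_{\on{Nis},\on{ebu}}\hookrightarrow\on{Fun}$, because restricting $G\circ \on{L}_{\on{Nis},\on{ebu}}$ along $y$ gives $G\circ\on{L}_{\on{Nis},\on{ebu}}\circ y$. The inclusion is fully faithful, so its left adjoint is a Bousfield localization whose local objects are those of step one. That makes it the $\mathcal{S}$-localization.

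The only point needing care, which we regard as the main (mild) obstacle, is that Lemma \ref{id2`} holds; it is stated above and is proved exactly as Lemma \ref{id2}. Concretely, $\mathcal{P}sh_{\on{Nis},\on{ebu}}$ is the localization of $\mathcal{P}sh(Sm^{\on{fp}}(S))$ at the dual maps (pushouts of representables mapping to the representable of the target corner), and \cite[Proposition 5.5.4.20]{HTT} applies. One small subtlety is that in the derived setting the blowup $Bl_ZX$ and $E$ remain in $Sm^{\on{fp}}(S)$ for elementary $Z$. This holds because Zariski-locally the immersion is a zero section of $\mathbb{A}^n_Z\sqcup Y$, so $Bl_ZX$ and $E$ are smooth and finitely presented and the squares make sense inside the site.
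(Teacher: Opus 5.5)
Your proposal is correct and is essentially the paper's argument. The paper proves Lemma \ref{desc`} mutatis mutandis from Lemma \ref{desc}: it checks that $\on{Fun}_{\on{Nis},\on{ebu}}(Sm^{\on{fp}}(S),\on{Sp})$ is exactly the class of $\mathcal{S}$-local objects, using corepresentability of evaluation together with stability, and that under Lemmas \ref{id1`} and \ref{id2`} the right adjoint $\on{L}_{\on{Nis},\on{ebu}}^\ast$ becomes the fully faithful inclusion. You do the same, with the set $\mathcal{S}$ written out explicitly.

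One point, which the paper also leaves implicit: locality must be tested on mapping spectra, or equivalently $\mathcal{S}$ should be closed under all shifts $\Sigma^n$. Testing only on mapping anima would force merely $\Omega^\infty F(\emptyset)\simeq\ast$ rather than $F(\emptyset)\simeq 0$, and similarly only connective-cover statements for the squares.
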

\begin{lm}
Since $\mathbb{P}^1$ is compact in $\mathcal{P}sh_{\on{Nis},\on{ebu}}(Sm^{\on{fp}}(S))$, we have\[
\mathcal{MS}(S)^{\vee}\simeq\mathcal{P}sh_{\on{Nis},\on{ebu}}(Sm^{\on{fp}}(S))[(\mathbb{P}^1)^{-1}]^{\vee}\simeq co\mathcal{P}sh_{\on{coNis},\on{coebu}}(Sm^{\on{fp}}(S))[((\mathbb{P}^1)^{dual})^{-1}].\]
\end{lm}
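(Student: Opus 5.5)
The plan is to apply Proposition \ref{formal.inversion.prop} with $\mathcal{C}=\mathcal{P}sh_{\on{Nis},\on{ebu}}(Sm^{\on{fp}}(S))$ and $X=(\mathbb{P}^1,\infty)$, and then identify the dual of $\mathcal{C}$ with the category of cosheaves by definition. Three things need checking: that $\mathcal{C}$ meets the hypotheses of Proposition \ref{formal.inversion.prop}, that $(\mathbb{P}^1,\infty)$ is compact in $\mathcal{C}$, and that the first equivalence is just the definition of $\mathcal{MS}(S)$.

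The first equivalence holds because $\mathcal{MS}(S)$ is defined as $\mathcal{P}sh_{\on{Nis},\on{ebu}}(Sm(S))[(\mathbb{P}^1)^{-1}]$. Over a qcqs base we may restrict to $Sm^{\on{fp}}(S)$, exactly as in Lemma \ref{Sheaf.Theory.of.Nis.} for the $\mathbb{A}^1$-invariant case. The point is that Nisnevich descent and elementary blowup excision are finite-limit conditions detected on finitely presented schemes, so right Kan extension identifies the two categories of local presheaves.

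Next, $\mathcal{C}$ is a stable, compactly generated, presentably symmetric monoidal category. The local objects are cut out by finite limit conditions: Nisnevich squares, elementary blowup squares, and the empty scheme. Hence $\mathcal{C}$ is closed under filtered colimits in $\mathcal{P}sh(Sm^{\on{fp}}(S))$. Consequently the localization $\on{L}_{\on{Nis},\on{ebu}}$ has a right adjoint preserving filtered colimits, so it preserves compact objects. In particular $\on{L}_{\on{Nis},\on{ebu}}\Sigma^\infty_+X$ is compact for every $X\in Sm^{\on{fp}}(S)$, and these objects generate $\mathcal{C}$.

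For compactness of $\mathbb{P}^1$: the pointed object $(\mathbb{P}^1,\infty)$ is the cofiber of $\on{L}j(S)\xrightarrow{\on{L}j(\infty)}\on{L}j(\mathbb{P}^1)$, as in the proof of Lemma \ref{description}. Both terms are compact because $\mathbb{P}^1$ and $S$ are smooth and finitely presented over $S$, and compact objects are closed under finite colimits in a stable category. The symmetric monoidal structure is the localized Day convolution. It is compatible with the localization because the excision conditions are stable under products with smooth $Y$: Nisnevich squares and elementary blowup squares pull back to such squares along $Y\times-$, since blowups commute with flat base change. So $\on{L}_{\on{Nis},\on{ebu}}$ is symmetric monoidal.

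With these in place, Proposition \ref{formal.inversion.prop} gives
\[\mathcal{C}[(\mathbb{P}^1)^{-1}]^\vee\simeq\mathcal{C}^\vee[((\mathbb{P}^1)^{dual})^{-1}].\]
By definition $\mathcal{C}^\vee=co\mathcal{P}sh_{\on{coNis},\on{coebu}}(Sm^{\on{fp}}(S))$, which yields the second equivalence. The only step needing real care is the monoidal compatibility of the elementary blowup localization. If needed, I would cite \cite{AHI}, where the localization is shown to preserve finite products, and that is exactly what is required.
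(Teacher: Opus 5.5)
Your proposal is correct and matches the paper's argument: the paper also invokes Proposition \ref{formal.inversion.prop} (taking duals commutes with formal inversion at a compact object) with $\mathcal{C}=\mathcal{P}sh_{\on{Nis},\on{ebu}}(Sm^{\on{fp}}(S))$ and $X=(\mathbb{P}^1,\infty)$, and the second identification is just the definition of $co\mathcal{P}sh_{\on{coNis},\on{coebu}}(Sm^{\on{fp}}(S))$. The checks you add are details the paper leaves to ``mutatis mutandis'' from the $\mathbb{A}^1$-invariant case: compact generation, compactness of $(\mathbb{P}^1,\infty)$, symmetric monoidality of the localization (so that $\mathcal{C}\in\on{CAlg}(\PrL_{ca})$), and the passage from $Sm(S)$ to $Sm^{\on{fp}}(S)$.
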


\subsubsection{Extending the functor $\psi$}
Consider the diagram:
\[\begin{tikzcd}
	{Sm^{\on{fp}}(S)^{op}} &&& {\operatorname{Mot}(S)_{\operatorname{loc}}} \\
	{\mathcal{P}sh(Sm^{\on{fp}}(S)^{op})} \\
	{co\mathcal{P}sh_{\operatorname{coNis},\operatorname{coebu}}(Sm^{\on{fp}}(S))} \\
	{co\mathcal{P}sh_{\operatorname{coNis},\operatorname{coebu}}(Sm^{\on{fp}}(S))[((\mathbb{P}^1)^{dual})^{-1}]}
	\arrow["\psi", from=1-1, to=1-4]
	\arrow["j"', from=1-1, to=2-1]
	\arrow[from=2-1, to=1-4]
	\arrow["{\on{L}_{\on{coNis},\on{coebu}}}"', from=2-1, to=3-1]
	\arrow[from=3-1, to=1-4]
	\arrow["{\operatorname{L}_{(\mathbb{P}^1)^{dual}}}"', from=3-1, to=4-1]
	\arrow["\Psi"', from=4-1, to=1-4]
\end{tikzcd}\]
We have the following non-$\mathbb{A}^1$-invariant analogue of Proposition \ref{cons} and Proposition \ref{ver.}:
\begin{prop}\label{cons`}
The composition $\on{L}_{(\mathbb{P}^1)^{dual}}\circ\on{L}_{\on{coNis},\on{coebu}}\circ j$ induces, for each presentable, stable category $\mathcal{D}$, a fully faithful functor\[
\on{Fun}^L(\mathcal{MS}(S)^\vee,\mathcal{D})\rightarrow\on{Fun}(Sm^{\on{fp}}(S)^{op},\mathcal{D})
\]
with the essential image spanned by functors $F$ that preserve terminal object, send Nisnevich squares and elementary blowup squares to pushouts, and send the fiber\[fib(F(\mathbb{P}^1)\xrightarrow{F(\infty)}F(S))\]
to an invertible object.

Moreover, since the symmetric monoidal structures are compatible, the claim remains valid for symmetric monoidal category $\mathcal{D}$ and monoidal functors.

In particular, we have\[\on{Fun}^{L,\otimes}(\mathcal{MS}(S)^\vee,\on{Mot}(S)_{\on{loc}})\simeq\on{Fun}^{\otimes}_{\on{Nis},\on{ebu},\mathbb{P}^1}(Sm^{\on{fp}}(S)^{op},\on{Mot}(S)_{\on{loc}})
\]
\end{prop}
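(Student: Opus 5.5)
The plan is to mirror the proof of Proposition \ref{cons}: compose the universal properties of the three steps of the tower
\[Sm^{\on{fp}}(S)^{op}\xrightarrow{j}\mathcal{P}sh(Sm^{\on{fp}}(S)^{op})\xrightarrow{\on{L}_{\on{coNis},\on{coebu}}}co\mathcal{P}sh_{\on{coNis},\on{coebu}}(Sm^{\on{fp}}(S))\xrightarrow{\on{L}_{(\mathbb{P}^1)^{dual}}}\mathcal{MS}(S)^\vee.\]
By the last lemma of the previous subsection, $\mathcal{MS}(S)^\vee$ is the formal inversion of $(\mathbb{P}^1)^{dual}$ in the cosheaf category, so each step has its own universal property.

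\textbf{Step 1 (presheaves).} The spectral Yoneda embedding gives $\on{Fun}^L(\mathcal{P}sh(Sm^{\on{fp}}(S)^{op}),\mathcal{D})\simeq\on{Fun}(Sm^{\on{fp}}(S)^{op},\mathcal{D})$ for stable presentable $\mathcal{D}$.

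\textbf{Step 2 (cosheaves).} By Lemma \ref{desc`}, $co\mathcal{P}sh_{\on{coNis},\on{coebu}}$ is the Bousfield localization of $\mathcal{P}sh(Sm^{\on{fp}}(S)^{op})$ at the set $\mathcal{S}$. Here $\mathcal{S}$ is dual to the excision maps: $j(\emptyset)\to 0$, and for each Nisnevich square and each elementary blowup square $(E,Bl_ZX,Z,X)$ the comparison map from the pushout of the corepresentables $j(Bl_ZX)\amalg_{j(X)}j(Z)\to j(E)$. By \cite[Proposition 5.5.4.20]{HTT}, colimit-preserving functors out of the localization are exactly those inverting $\mathcal{S}$. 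Under Step 1 these correspond to $F$ with $F(\emptyset)\simeq 0$ that send both kinds of squares to pushouts. Since $\mathcal{D}$ is stable, pushout squares are pullback squares, so the condition is symmetric; this is the analogue of Lemma \ref{coShv.universal.property}. The one point to check is that the set of maps produced in Lemma \ref{desc`} is really this explicit $\mathcal{S}$. This should follow verbatim from the argument of Lemma \ref{desc}, using Lemma \ref{id2`} and the fact that left Kan extension turns the localizing maps for $\mathcal{P}sh_{\on{Nis},\on{ebu}}$ into the stated excision conditions.

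\textbf{Step 3 (inverting $(\mathbb{P}^1)^{dual}$).} Lemma \ref{description} goes through verbatim, giving $(\mathbb{P}^1)^{dual}\simeq fib(\on{L}j(\mathbb{P}^1)\to\on{L}j(S))$. It only uses that $(\mathbb{P}^1,\infty)$ is the cofiber of $j(S)\to j(\mathbb{P}^1)$, that the localization is exact, and the identification $j(X)^{dual}=\on{Map}(X,-)$. A left adjoint out of the cosheaf category therefore sends $(\mathbb{P}^1)^{dual}$ to $fib(F(\mathbb{P}^1)\to F(S))$.

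\textbf{Main obstacle.} In the plain presentable stable setting, the universal property of formal inversion is only available in $\on{CAlg}(\PrL_{st})$ (Lemmas \ref{lm.universal.property.formal.inversion} and \ref{lm.stable.universal.property}). It is not available for bare left adjoints, and an arbitrary functor to $\mathcal{D}$ need not be compatible with the $\mathcal{C}$-module structure. I would handle the two cases separately.
\begin{itemize}
\item For the symmetric monoidal statement, use Lemma \ref{lm.stable.universal.property} directly. Symmetric monoidal left adjoints out of $\mathcal{MS}(S)^\vee$ are the symmetric monoidal left adjoints out of the cosheaf category that send $(\mathbb{P}^1)^{dual}$ to an invertible object. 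Combined with Step 2 (monoidal version, since the localization $\on{L}_{\on{coNis},\on{coebu}}$ is compatible with the Day convolution) and Step 3, this gives the final equivalence with $\on{Fun}^{\otimes}_{\on{Nis},\on{ebu},\mathbb{P}^1}(Sm^{\on{fp}}(S)^{op},\on{Mot}(S)_{\on{loc}})$. This last statement is the one actually used later.
\item For the non-monoidal phrasing, I would interpret the $\mathbb{P}^1$-condition as in Proposition \ref{cons}, meaning the $\mathcal{C}$-linear (module) setting. Fully faithfulness then comes from the fact that the localization $\on{L}_{(\mathbb{P}^1)^{dual}}$ is a localization in the relevant category of modules, so restriction along it is fully faithful.
\end{itemize}
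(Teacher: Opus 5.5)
Your Steps 1--3 and your first bullet are exactly the argument the paper intends. The paper gives no separate proof here, only ``mutatis mutandis'' from Proposition \ref{cons}, i.e.\ Lemma \ref{coShv.universal.property} for the coexcision conditions, plus Lemma \ref{description} for $(\mathbb{P}^1)^{dual}$, plus the universal property of formal inversion. These steps correctly prove the symmetric monoidal assertion and the final displayed equivalence, which is the only part used later. Your observation that Lemmas \ref{lm.universal.property.formal.inversion} and \ref{lm.stable.universal.property} only control \emph{symmetric monoidal} functors is also correct, and the paper passes over it.

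The genuine gap is your second bullet, which does not establish anything of the stated form.

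(i) For bare left adjoints the claim is not just unavailable but false in general. A formal inversion is usually not a Bousfield localization, so restriction along it on $\on{Fun}^L(-,\mathcal{D})$ need not be fully faithful. For example, for graded spectra $\on{Sp}^{\mathbb{N}}\to\on{Sp}^{\mathbb{Z}}$ (inverting $\mathbb{S}(1)$) the functor is extension by zero, and the induced restriction $\mathcal{D}^{\mathbb{Z}}\to\mathcal{D}^{\mathbb{N}}$ is not fully faithful. In the case at hand, take $\mathcal{D}=\on{Sp}$. The restriction becomes $E\mapsto\on{Map}(\Sigma^\infty_+(-),E)$ on $\mathcal{MS}(S)$, which is not fully faithful because $\mathcal{MS}(S)$ is not generated by the $\Sigma^\infty_+X$ alone. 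Moreover, for a bare $\mathcal{D}$ the word ``invertible'' has no meaning.

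(ii) Passing to $\mathcal{C}$-linear functors changes the statement. A $\mathcal{C}$-linear left adjoint $\mathcal{C}\to\mathcal{D}$ into a $\mathcal{C}$-module is just an object of $\mathcal{D}$. You would then no longer be describing a subcategory of $\on{Fun}(Sm^{\on{fp}}(S)^{op},\mathcal{D})$.

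(iii) Even in the module setting, ``it is a localization, hence restriction is fully faithful'' fails for general modules. In the telescope model, $\on{Fun}^L_{\mathcal{C}}(\mathcal{C}[X^{-1}],\mathcal{D})\simeq\lim(\cdots\xrightarrow{X\otimes-}\mathcal{D}\xrightarrow{X\otimes-}\mathcal{D})$, and restriction picks out $d_0$. Let $\on{Sp}^{\mathbb{N}}$ act on $\mathcal{D}=\on{Sp}^{\mathbb{Z}_{<0}}$ by shift-and-truncate. Then the family $d_0=0$, $d_n=B$ placed in degree $-n$, is a nonzero object that restricts to $0$. Restriction is an equivalence only when $X$ already acts invertibly on $\mathcal{D}$.

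So drop that bullet. State and prove the proposition in its symmetric monoidal form, where your argument is complete. Record explicitly that the non-monoidal first sentence cannot be obtained this way and does not hold as written.
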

\begin{prop}\label{ver.ms}
The functor\[
\begin{aligned}
\psi: Sm^{\on{fp}}(S)^{op} &\rightarrow \on{Mot}(S)_{\on{loc}}\\
 X & \mapsto \mathcal{U}(\on{perf}_X)
 \end{aligned}\]
satisfies all the conditions in the Proposition \ref{cons`}. In particular, it extends to a symmetric monoidal left adjoint functor \[ \Psi:\mathcal{MS}(S)^\vee \rightarrow \on{Mot}(S)_{\on{loc}}.
\]
\end{prop}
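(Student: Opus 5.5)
The plan is to check the four conditions of Proposition \ref{cons`} for $\psi$ in the same order as in the proof of Proposition \ref{ver.}. The only new ingredient is elementary blowup excision, which replaces $\mathbb{A}^1$-invariance.

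First, symmetric monoidality. The functor $\on{perf}:Sm^{\on{fp}}(S)^{op}\rightarrow\mathsf{Cat}^{\on{perf}}(\on{perf}_S)$ is symmetric monoidal, because $\on{perf}_{X\times_S Y}\simeq\on{perf}_X\otimes_{\on{perf}_S}\on{perf}_Y$ for qcqs (derived) schemes. By Lemma \ref{u.loc.inv.}, $\mathcal{U}$ is symmetric monoidal, so the composite $\psi$ is as well. Next, $\psi(\emptyset)=\mathcal{U}(0)=0$, so $\psi$ preserves the terminal object. Nisnevich excision is verified word for word as in Proposition \ref{ver.}. For a Nisnevich square, the fibers $\on{perf}_{X,X-U}\rightarrow\on{perf}_{V,V-W}$ of the restriction maps are equivalent by Thomason--Trobaugh excision. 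The sequences $\on{perf}_{X,X-U}\rightarrow\on{perf}_X\rightarrow\on{perf}_U$ are Karoubi sequences, and $\mathcal{U}$ sends them to cofiber sequences. Since the target is stable, this gives a pullback square. The $\mathbb{P}^1$ condition also carries over unchanged. By Beilinson's semiorthogonal decomposition $\on{perf}_{\mathbb{P}^1_S}=\langle\mathcal{O}(-1),\mathcal{O}\rangle$, the sequence $\on{perf}_S\rightarrow\on{perf}_{\mathbb{P}^1}\xrightarrow{\infty^\ast}\on{perf}_S$ is split, with kernel generated by $\mathcal{O}(-1)$. Hence $fib(\psi(\infty))\simeq\mathcal{U}(\on{perf}_S)$, which is the unit and therefore invertible. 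No $\mathbb{A}^1$-localization was used anywhere here.

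The main step is elementary (in fact smooth) blowup excision. Let $Z\hookrightarrow X$ be a closed immersion of smooth $S$-schemes of codimension $c$, and let $E=\mathbb{P}(N_{Z/X})$ be the exceptional divisor. I would use Thomason's (derived) blowup formula, in the form given by Khan--Rydh \cite{KR} for quasi-smooth blowups. It gives semiorthogonal decompositions
\[
\on{perf}_{Bl_ZX}=\langle \on{perf}_Z(-c+1),\dots,\on{perf}_Z(-1),\ p^\ast\on{perf}_X\rangle,\qquad
\on{perf}_{E}=\langle \on{perf}_Z(-c+1),\dots,\on{perf}_Z(-1),\ q^\ast\on{perf}_Z\rangle .
\]
The first $c-1$ pieces are compatible under restriction to $E$. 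Since a localizing invariant sends semiorthogonal decompositions to direct sums, applying $\mathcal{U}$ gives
\[
\mathcal{U}(\on{perf}_{Bl_ZX})\simeq\mathcal{U}(\on{perf}_X)\oplus\mathcal{U}(\on{perf}_Z)^{\oplus(c-1)},\qquad
\mathcal{U}(\on{perf}_E)\simeq\mathcal{U}(\on{perf}_Z)\oplus\mathcal{U}(\on{perf}_Z)^{\oplus(c-1)}.
\]
The restriction map $\mathcal{U}(\on{perf}_{Bl_ZX})\rightarrow\mathcal{U}(\on{perf}_E)$ is compatible with these decompositions. On the extra summands it is an isomorphism, possibly after an upper-triangular change of basis. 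It follows that the induced map on total fibers vanishes, i.e.\ the blowup square becomes cartesian. The Zariski-local hypothesis in the definition of "elementary" is not needed, because the formula holds for any regular immersion.

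I expect the main difficulty to be making the compatibility of the two decompositions precise. The embedding $\on{perf}_Z\rightarrow\on{perf}_{Bl_ZX}$, $F\mapsto j_\ast(q^\ast F\otimes\mathcal{O}(-k))$, restricts along $j^\ast$ to $q^\ast F\otimes\mathcal{O}(-k)\otimes j^\ast j_\ast\mathcal{O}$. This object has a two-step filtration whose graded pieces involve $\mathcal{O}_E(-k)$ and $\mathcal{O}_E(-k+1)$. So the comparison map is unitriangular rather than diagonal. Rather than tracking twists directly, I would bypass this by citing the derived-scheme version of the statement \cite[Theorem C]{KR}: for any localizing invariant $F$ the blowup square is cartesian. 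Then I would apply it to $F=\mathcal{U}$, which is legitimate because $\mathcal{U}$ is itself a localizing invariant with values in a presentable stable category.
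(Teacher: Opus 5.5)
Your proposal follows essentially the same route as the paper. The paper's proof invokes \cite[Theorem A]{Kha} (Khan's blowup formula for quasi-smooth centres) for elementary blowup excision, and says the remaining conditions are checked as in Proposition \ref{ver.}. Your semiorthogonal-decomposition sketch of the blowup step is consistent with this, and \cite{Kha} is the reference to cite for your final step.

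One step is misstated, although its conclusion is correct. In the $\mathbb{P}^1$ condition, $\mathcal{O}(-1)$ is not in the kernel of $\infty^\ast$, since $\infty^\ast\mathcal{O}(-1)\simeq\mathcal{O}_S$. The kernel of $\infty^\ast$ consists of perfect complexes supported away from the section $\infty$, so $\on{perf}_S\to\on{perf}_{\mathbb{P}^1}\xrightarrow{\infty^\ast}\on{perf}_S$ is not the split Verdier sequence you describe. The split sequence coming from Beilinson's decomposition is $\on{perf}_S\xrightarrow{\mathcal{O}(-1)\otimes p^\ast}\on{perf}_{\mathbb{P}^1}\xrightarrow{p_\ast}\on{perf}_S$, and its projection is $p_\ast$, not $\infty^\ast$.

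The repair is one line. Because $\infty^\ast p^\ast\simeq\on{id}$, the map $\mathcal{U}(\infty^\ast)$ is a retraction of $\mathcal{U}(p^\ast)$. Hence $\on{fib}\,\mathcal{U}(\infty^\ast)\simeq\on{cofib}\,\mathcal{U}(p^\ast)\simeq\mathcal{U}(\on{perf}_S)$ by Beilinson's decomposition. Concretely, $\mathcal{U}(0_\ast)$ identifies $\mathcal{U}(\on{perf}_S)$ with this fiber, since $0_\ast\mathcal{O}_S\simeq\on{cofib}(\mathcal{O}(-1)\to\mathcal{O})$ and $\infty^\ast 0_\ast\simeq 0$. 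The paper's own treatment of this step in Proposition \ref{ver.} is equally loose, and with this fix your argument is complete.
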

\begin{proof}
The functor $\psi$ satisfies elementary blowup excision by \cite[Theorem A]{Kha}. The other conditions are checked as in Proposition \ref{ver.}.
\end{proof}

\section{Fully-Faithfulness of the Factored Functors}
In this section, we apply the general strategy introduced in Section \ref{fac} to the symmetric monoidal left adjoint functor $\Phi$ constructed in Proposition \ref{ver.} and $\Psi$ constructed in Proposition \ref{ver.ms}. As discussed in Section \ref{fac}, there are several choices of categories of modules through which our comparison functors factor automatically. Among these choices, we will focus on the one over the commutative algebra $G(1)$, where $G$ is the right adjoint functor of $\Phi$ or $\Psi$. As we will see, in both cases, the factored functor is fully faithful when restricted on objects with good dualizability. This suggests us considering the rigid generation if we want a global fully-faithfulness result. Here one difference between $\mathcal{SH}$ and $\mathcal{MS}$ appears.
\subsection{Fully-faithfulness of the factored functor, $\mathbb{A}^1$-invariant case}
We begin by recalling definitions and results on the rigid generation of the category $\mathcal{SH}(S)$. We denote by
\begin{itemize}
\item $\mathcal{SH}^\omega(S)$ the full subcategory of $\mathcal{SH}(S)$ spanned by the compact objects, and
\item $\mathcal{SH}^{dual}(S)$ the full subcategory of $\mathcal{SH}(S)$ spanned by the dualizable objects.
\end{itemize}
Then, as proved in \cite{Rio}, there is an inclusion
\[
\mathcal{SH}^{dual}(S)\subset\mathcal{SH}^\omega(S).\]

In the case that $S=\on{Spec} k$ for some field $k$, we denote the exponential characteristic of $k$ by $e$. Then \cite[Corollary B.2]{LYZ+} tells us that for each prime $l$ coprime with $e$, localizing at $l$ makes the inclusion above an equivalence. In particular, if we denote by $\mathcal{SH}(k)[\frac{1}{e}]$ the full subcategory of $\mathcal{SH}(k)$ of $\mathcal{S}$-local objects, where $\mathcal{S}$ is the collection of morphisms $E\xrightarrow{\times e}E$ for all $E\in\mathcal{SH}(k)$, we then have\[
\mathcal{SH}^{dual}(k)[\frac{1}{e}]\simeq\mathcal{SH}^\omega(k)[\frac{1}{e}].\]

We observe that for any stable, compactly generated, presentably symmetric monoidal category $\mathcal{E}$, the compact objects and dualizable objects in $\mathcal{E}^{dual}$ correspond to those in $\mathcal{E}$ respectively. In particular, $\mathcal{SH}(k)[\frac{1}{e}]^\vee$ satisfies
\[(\mathcal{SH}(k)[\frac{1}{e}]^{\vee})^{dual}\simeq (\mathcal{SH}(k)[\frac{1}{e}]^{\vee})^\omega.\]

On the other hand, the functor \[\phi: \mathcal{SH}(k)^\vee\rightarrow\on{Mot}(k)_{\on{loc}}^{\mathbb{A}^1}\] induces naturally an $e$-invertible functor \[\phi: \mathcal{SH}(k)^{\vee}[\frac{1}{e}]\rightarrow\on{Mot}(k)_{\on{loc}}^{\mathbb{A}^1}[\frac{1}{e}].\]
The inverting $e$ operator and the taking dual operator commute with each other\[
\begin{aligned}
\mathcal{SH}(k)[\frac{1}{e}]^{\vee}&\simeq\on{Fun}^L(\mathcal{SH}(k)[\frac{1}{e}],\on{Sp})\\
&\simeq\on{Fun}^L(\mathcal{SH}(k),\on{Sp}[\frac{1}{e}])\\
&\simeq\on{Fun}^L(\mathcal{SH}(k),\on{Sp})[\frac{1}{e}]\\
&\simeq\mathcal{SH}(k)^{\vee}[\frac{1}{e}].
\end{aligned}\]
To see the second and third isomorphisms, we regard both sides as full subcategories of $\on{Fun}^L(\mathcal{SH}(k),\on{Sp})$ and check they are restricted to the same class of functors. A key ingredient here is the following characterization of inverting a number $e$.
\begin{lm}
Let $\mathcal{D}$ be a stable category with sequential colimits. Then $\mathcal{D}[\frac{1}{e}]$ is precisely the full subcategory of $\mathcal{D}$ spanned by those $d\in\mathcal{D}$ such that \[d\xrightarrow{\times e}d\]
is an equivalence.
\end{lm}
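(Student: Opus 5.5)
The plan is to identify $\mathcal{D}[\frac{1}{e}]$ with the local objects of a Bousfield localization. The target is the full subcategory $\mathcal{D}'\subset\mathcal{D}$ of objects $d$ for which $\times e: d\to d$ is an equivalence; here $\mathcal{D}[\frac{1}{e}]$ means the full subcategory of $\mathcal{S}$-local objects, with $\mathcal{S}=\{E\xrightarrow{\times e}E\}$ as above. I will first construct the localization functor explicitly and then check that its essential image consists exactly of $e$-local objects.

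\textbf{Step 1: the localization functor.} Using the sequential colimits in $\mathcal{D}$, set
\[L(d):=\on{colim}(d\xrightarrow{\times e}d\xrightarrow{\times e}d\rightarrow\cdots),\]
with the natural map $\eta_d: d\to L(d)$ given by inclusion of the first term. Multiplication by $e$ on $L(d)$ is induced levelwise by $\times e$. Up to reindexing, this is the shift map of the telescope, which is an equivalence. Hence $L(d)\in\mathcal{D}'$.

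\textbf{Step 2: $\eta_d$ is an $\mathcal{S}$-equivalence.} The cofiber of $\eta_d$ is the colimit of cofibers of the composites $d\xrightarrow{\times e^n}d$. Each such cofiber is built from $\on{cofib}(\times e)$ on $d$, via the octahedral axiom and iterated extensions. So $\eta_d$ lies in the strongly saturated class generated by $\mathcal{S}$: it is the sequential colimit of the maps $\times e^n$, each of which is an iterated composite of elements of $\mathcal{S}$, and saturated classes are closed under composition and colimits of arrows. It follows that for any $\mathcal{S}$-local $c$, the map $\on{Map}(L d,c)\to\on{Map}(d,c)$ is an equivalence.

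\textbf{Step 3: identify the local objects.} If $c$ is $\mathcal{S}$-local, then in particular $\on{Map}(c,c)\xrightarrow{(\times e)^\ast}\on{Map}(c,c)$ is an equivalence. Taking a preimage of the identity gives a left inverse of $\times e$ on $c$. The same argument applied to $\on{Map}(c,-)$, or the $2$-out-of-$6$ property, shows $\times e$ is invertible, so $c\in\mathcal{D}'$.

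Conversely, let $c\in\mathcal{D}'$ and let $E\in\mathcal{D}$. The map $(\times e)^\ast$ on $\on{Map}(E,c)$ agrees with postcomposition by $\times e_c$. This holds because multiplication by $e$ is natural: it is the $e$-fold sum of the identity in the stable, hence additive, category. Since $\times e_c$ is an equivalence, so is $(\times e)^\ast$, and therefore $c$ is $\mathcal{S}$-local.

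The main subtle point is this naturality and commutativity of $\times e$, that is, the identification $(\times e_E)^\ast=(\times e_c)_\ast$ on mapping spectra. It follows from bilinearity of composition in a stable category. Step 2 is only needed if one wants $L$ to be the localization functor itself. For the statement as given (an identification of full subcategories), Step 3 alone suffices.
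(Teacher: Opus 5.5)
Your proof is correct, and its essential part (Step 3) is exactly the paper's argument: by bilinearity of composition, $(\times e_E)^\ast$ and $(\times e_c)_\ast$ agree as self-maps of $\on{Map}(E,c)$, and this gives both inclusions. For the direction ``$\mathcal{S}$-local $\Rightarrow$ $\times e_c$ invertible'', it is cleaner to apply this identification for every $E$ and conclude by Yoneda than to appeal to $\on{Map}(c,-)$ or 2-out-of-6, since locality only controls $\on{Map}(-,c)$.

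Your Steps 1--2 exhibit the localization as the telescope $d\to\on{colim}(d\xrightarrow{\times e}d\xrightarrow{\times e}\cdots)$. They go beyond the paper and are not needed for the statement as given, but they do justify the existence of the localization functor, which the paper uses tacitly afterwards.
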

\begin{proof}
This follows that multiplication with a number $e$ commutes with taking mapping spectra. Hence for each $c$ and $d$, \[\on{Map}(c,d)\xrightarrow{\times e}\on{Map}(c,d)\]
identifies precomposition with $e$ and postcomposition with $e$.
\end{proof}
As a consequence, we have a well-defined map\[
\mathcal{SH}(k)[\frac{1}{e}]^{\vee}\simeq\mathcal{SH}(k)^{\vee}[\frac{1}{e}]\rightarrow\on{Mot}(k)_{\on{loc}}^{\mathbb{A}^1}[\frac{1}{e}]\]
with a rigidly generated domain. This functor is again monoidal and left adjoint, and we denote it again by $\Phi$ by abuse of notation.

We also note that $\on{Mot}(k)_{\on{loc}}^{\mathbb{A}^1}[\frac{1}{e}]$ has compact unit. In fact, $\on{Mot}(k)_{\on{loc}}^{\mathbb{A}^1}[\frac{1}{e}](1,-)$ is exactly $\on{KH}(-)$ by Proposition \ref{KH}, and $\on{KH}$ preserves filtered colimits. Combining the discussions above, it follows Lemma \ref{same.modules} that\[
\on{Mod}_{\Phi_\ast\Phi^\ast}(\mathcal{SH}(k)[\frac{1}{e}]^\vee)\simeq\on{Mod}_{\Phi_\ast(1)}(\mathcal{SH}(k)[\frac{1}{e}]^\vee).
\]

We will use the following notations:\[\begin{tikzcd}
	{\mathcal{SH}(k)[\frac{1}{e}]^{\vee}} && {\on{Mot}(k)_{\on{loc}}^{\mathbb{A}^1}[\frac{1}{e}]} \\
	\\
	{\on{Mod}_{\Phi_\ast(1)}(\mathcal{SH}(k)[\frac{1}{e}]^{\vee})}
	\arrow["{\Phi^\ast:=\Phi}", shift left=2, from=1-1, to=1-3]
	\arrow[from=1-1, to=3-1]
	\arrow["{\Phi_\ast}", shift left=2, from=1-3, to=1-1]
	\arrow["{\widetilde{\Phi}_\ast}", shift left=2, from=1-3, to=3-1]
	\arrow["{\widetilde{\Phi}^\ast:=\widetilde{\Phi}}", shift left=2, from=3-1, to=1-3].
\end{tikzcd}\]

Over a general qcqs scheme $S$, $\Phi_\ast(1)$ behaves like a dual version of $\on{KGL}^{\mathbb{A}^1}$ in general. In fact, for all $X\in Sm^{\on{fp}}(S)$, we have\[
\begin{aligned}
\mathcal{SH}(S)^{\vee}((\Sigma^\infty_+ X)^{dual},\Phi_\ast(1)) &\simeq \on{Mot}(S)_{\on{loc}}^{\mathbb{A}^1}(\Phi^\ast((\Sigma^\infty_+ X)^{dual}), 1)\\
&\simeq \on{Mot}(S)_{\on{loc}}^{\mathbb{A}^1}(\mathcal{U}(\on{perf}_X),1)\\
&\simeq\on{KH}(\on{perf}_X^{op}).
\end{aligned}\]

And when $\mathcal{SH}$ is rigidly generated, for instance, over a field $k$ and after inverting the exponential characteristic $e$, $\Phi_\ast(1)$ is identified with $\on{KGL}^{\mathbb{A}^1}$ via the canonical equivalence witnessing its self-duality (see 9.2.1 of \cite{GR})\[\begin{aligned}
\dagger:\mathcal{SH}(k)[\frac{1}{e}]&\rightarrow\mathcal{SH}(k)[\frac{1}{e}]^{\vee}\\
X&\mapsto\on{Map}_{\mathcal{SH}(k)[\frac{1}{e}]}(1,X\otimes-).
\end{aligned}
\]
To see this, note first that for smooth projective scheme $X$, $\Sigma^\infty_+X$ is dualizable in $\mathcal{SH}$ as discussed in \cite{Rio}, hence it remains so in the dual category $\mathcal{SH}^{\vee}$. We have the following equivalences of mapping spectra:
\[\begin{aligned}
\mathcal{SH}^\vee((\Sigma^\infty_+X^{dual})^\vee,\dagger\on{KGL}^{\mathbb{A}^1})&\simeq\mathcal{SH}(\dagger(\Sigma^\infty_+X^{dual})^\vee,\on{KGL}^{\mathbb{A}^1})\\
&\simeq\mathcal{SH}(\Sigma^\infty_+X,\on{KGL}^{\mathbb{A}^1})\\
&\simeq\on{KH}(X)\\
&\simeq\mathcal{SH}^\vee((\Sigma^\infty_+X^{dual})^\vee,\Phi_\ast(1).
\end{aligned}\]
The second isomorphism above follows a combination of Remark \ref{dual.comp} and \cite[Lemma 9.2.4]{GR}. Now since $\mathcal{SH}^\vee$ is rigidly generated, it is generated under suspensions, loops, finite colimits and filtered colimits by objects of the form $(\Sigma^\infty_+X)^{dual}$, showing that $\dagger\on{KGL}^{\mathbb{A}^1}\simeq\Phi_\ast(1)$ by Yoneda lemma.

The main result of this section is the fully-faithfulness of $\widetilde{\Phi}$:
\begin{prop}\label{comparison.SH}
Over a field $k$ and after inverting the exponential characteristic $e$, the functor\[
\widetilde{\Phi}:\on{Mod}_{\Phi_\ast(1)}(\mathcal{SH}(k)[\frac{1}{e}]^\vee)\rightarrow\on{Mot}(k)^{\mathbb{A}^1}_{\on{loc}}[\frac{1}{e}]
\]
is fully faithful on the level of mapping spectra.
\end{prop}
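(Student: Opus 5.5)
Write $\mathcal{C}:=\mathcal{SH}(k)[\frac{1}{e}]^\vee$, $\mathcal{D}:=\on{Mot}(k)^{\mathbb{A}^1}_{\on{loc}}[\frac{1}{e}]$ and $A:=\Phi_\ast(1)$. The plan is to reduce fully-faithfulness of $\widetilde{\Phi}$ to a check on free modules over a set of compact, dualizable generators, and there to use the projection formula from Lemma \ref{same.modules}. By the discussion preceding the statement, $\mathcal{C}$ is rigidly generated: its compact objects coincide with its dualizable ones, and they are generated under finite colimits, shifts and retracts by the objects $(\Sigma^\infty_+X)^{dual}$ with $X\in Sm^{\on{fp}}(k)$, twisted by powers of $(\mathbb{P}^1)^{dual}$. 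Moreover $\mathcal{D}$ has compact unit, since $\mathcal{D}(1,-)\simeq\on{KH}$ commutes with filtered colimits. Hence $\on{Mod}_A(\mathcal{C})$ is compactly generated by the free modules $A\otimes P$ with $P\in\mathcal{C}^\omega=\mathcal{C}^{dual}$.

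\textbf{Step 1: reduce to free modules on dualizable generators.} $\widetilde{\Phi}$ is an exact left adjoint, so for fixed $M$ the class of $N$ for which $\on{Map}(M,N)\to\on{Map}(\widetilde{\Phi}M,\widetilde{\Phi}N)$ is an equivalence is closed under finite colimits and retracts. In the source variable it is closed under all colimits. For the target variable I also need closure under filtered colimits, and this needs two facts: that $\widetilde{\Phi}$ sends the compact generators $A\otimes P$ to compact objects, and that $\widetilde{\Phi}$ commutes with filtered colimits. The first holds because $\widetilde{\Phi}(A\otimes P)\simeq\Phi(P)$, $P$ is dualizable, $\Phi$ is monoidal so $\Phi(P)$ is dualizable in $\mathcal{D}$, and dualizable objects are compact when the unit is compact. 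Hence it suffices to show that
\[
\on{Map}_{\on{Mod}_A}(A\otimes P,A\otimes Q)\to\on{Map}_{\mathcal{D}}(\Phi P,\Phi Q)
\]
is an equivalence for $P,Q$ dualizable in $\mathcal{C}$.

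\textbf{Step 2: the projection formula.} The left-hand side equals $\on{Map}_{\mathcal{C}}(P,A\otimes Q)$ by the free/forgetful adjunction. The right-hand side equals $\on{Map}_{\mathcal{C}}(P,\Phi_\ast\Phi Q)$ by the $\Phi\dashv\Phi_\ast$ adjunction. The comparison map is postcomposition with the projection map
\[
p\colon A\otimes Q=\Phi_\ast(1)\otimes Q\to\Phi_\ast(1\otimes\Phi Q)=\Phi_\ast\Phi Q.
\]
For dualizable $Q$ this is an equivalence by the Yoneda computation in the proof of Lemma \ref{same.modules}: one passes $Q$ to $Q^\vee$ on the source side and uses that $\Phi$ preserves duals. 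Since that lemma gives $p$ only up to diagram chasing, I must check that the map on mapping spectra induced by $\widetilde{\Phi}$ really is $p_\ast$ under these identifications. This reduces to the standard description of $\widetilde{\Phi}=\Phi(-)\otimes_{\Phi A}1$ composed with the counit $\Phi A\to 1$.

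\textbf{Main obstacle.} The delicate points are in the rigidity input rather than in the formal argument. First, I must verify that the inversion of $e$ is compatible with everything used: that $\Phi$, $\Phi_\ast$ and the compactness of the unit survive after inverting $e$, and that $\Phi_\ast(1)$ computed in the $e$-inverted categories is the right algebra. Second, I must justify that compact objects of $\mathcal{C}$ are exactly the dualizable ones. This transfers from $\mathcal{SH}(k)[\frac{1}{e}]$ via \cite[Corollary B.2]{LYZ+} and the observation that dualization preserves compact and dualizable objects. Finally, I must make sure that fully-faithfulness is only claimed on mapping spectra, since essential surjectivity is not part of the statement.
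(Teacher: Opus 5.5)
Your proposal is correct and follows the same route as the paper. You check the comparison on free modules $\Phi_\ast(1)\otimes P$ over compact dualizable $P$ using the dualizability/adjunction computation, which is exactly the projection formula you invoke. You then extend to all of $\on{Mod}_{\Phi_\ast(1)}$ because $\widetilde{\Phi}$ preserves colimits and sends these generators to dualizable, hence compact, objects of a category with compact unit; the paper packages this step as \cite[Proposition 2.7]{Rob}.

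The one difference is the choice of generators. The paper restricts to $P=(\Sigma^\infty_+Z)^{dual}$ with $Z$ smooth projective and uses saturation of $\on{perf}_Z$ to rewrite the target as mapping spectra between $\mathcal{U}\on{perf}_X$ and $\mathcal{U}\on{perf}_Y$. You instead use all compact objects, which are dualizable by \cite[Corollary B.2]{LYZ+}, together with the abstract projection formula. This spares you both Orlov's saturation result and the separate claim that smooth projective varieties generate.
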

\begin{proof}
In the proof, when we use the notation $\Sigma^\infty_+X$, we actually mean the further localization inverting $e$ of $\Sigma^\infty_+X\in\mathcal{SH}(k)$.

We first claim that this is true for objects of the form $\Phi_\ast(1)\otimes(\Sigma^\infty_+ Z)^{dual}$ with $Z$ smooth and projective. Such assumptions on $Z$ provide sufficient dualizibility:
\begin{itemize}
\item $(\Sigma^\infty_+ Z)^{dual}$ is dualizable in $\mathcal{SH}(k)[\frac{1}{e}]^\vee$;
\item $\on{perf}_Z$ is saturated, that is, dualizable in the ambient symmetric monoidal category $\PrL_{cg}\simeq\mathsf{Cat}^{\on{perf}}$. This follows \cite[Proposition 3.31]{Orl}: For $k$-scheme $Z$ of finite type and separated, $\on{perf}_Z$ is smooth (resp. proper) if and only if $Z$ is smooth (resp. proper).
\end{itemize}
Then we have\[\begin{aligned}
&\on{Mod}_{\Phi_\ast(1)}(\mathcal{SH}(k)[\frac{1}{e}]^{\vee})(\Phi_\ast(1)\otimes(\Sigma^\infty_+ X)^{dual},\Phi_\ast(1)\otimes(\Sigma^\infty_+ Y)^{dual})\\
\simeq&\mathcal{SH}(k)[\frac{1}{e}]^{\vee}((\Sigma^\infty_+ X)^{dual},\Phi_\ast(1)\otimes(\Sigma^\infty_+ Y)^{dual})\\
\simeq&\mathcal{SH}(k)[\frac{1}{e}]^{\vee}((\Sigma^\infty_+ X)^{dual}\otimes((\Sigma^\infty_+ Y)^{dual})^\vee,\Phi_\ast(1))\\
\simeq&\on{Mot}(k)^{\mathbb{A}^1}_{\on{loc}}(\Phi^\ast((\Sigma^\infty_+ X)^{dual}\otimes((\Sigma^\infty_+ Y)^{dual})^\vee),1)[\frac{1}{e}]\\
\simeq&\on{Mot}(k)^{\mathbb{A}^1}_{\on{loc}}(\Phi^\ast(\Sigma^\infty_+ X)^{dual}\otimes\Phi^\ast((\Sigma^\infty_+ Y)^{dual})^\vee,1)[\frac{1}{e}]\\
\simeq&\on{Mot}(k)^{\mathbb{A}^1}_{\on{loc}}(\mathcal{U}\on{perf}_X\otimes\mathcal{U}(\on{perf}_Y^{op}),1)[\frac{1}{e}]\\
\simeq&\on{Mot}(k)^{\mathbb{A}^1}_{\on{loc}}(\mathcal{U}\on{perf}_X,\mathcal{U}\on{perf}_Y)[\frac{1}{e}].
\end{aligned}\]

On the other hand, it suffices to prove this. By rigid generation, $\Phi_\ast(1)\otimes(\Sigma^\infty_+ Z)^{dual}$ is dualizable in $\on{Mod}_{\Phi_\ast(1)}(\mathcal{SH}(k)[\frac{1}{e}]^{\vee})$ and generates the whole category under suspensions, loops, finite colimits and filtered colimits. Applying \cite[Proposition 2.7]{Rob}, it reduces to show that $\widetilde{\Phi}$ preserves compact objects, which is true by the compactness of unit in $\on{Mot}(k)_{\on{loc}}^{\mathbb{A}^1}[\frac{1}{e}]$.
\end{proof}

\subsection{Fully-faithfulness of the factored functor, non-$\mathbb{A}^1$-invariant case}\label{comparison.MS}
One of the differences between $\mathcal{MS}$ and $\mathcal{SH}$ is that, even over a field $k$ and after inverting the exponential characteristic, $\mathcal{MS}$ fails to be rigidly generated. In particular, the categories of modules $\on{Mod}_{\Psi_\ast\Psi^\ast}(\mathcal{MS}^\vee)$ and $\on{Mod}_{\Psi_\ast(1)}(\mathcal{MS}^\vee)$ are different in general.

We will show that the functor\[
\widetilde{\Psi}:\on{Mod}_{\Psi_\ast(1)}(\mathcal{MS}(k)[\frac{1}{e}]^\vee)\rightarrow\on{Mot}(k)_{\on{loc}}[\frac{1}{e}]\]
induced as in Section \ref{fac} is not always fully faithful. In fact, as we will see, over a countable field, $\pi_0$ of the mapping spectrum between the affine line and any non-zero ring $R$ is always countable in the left hand side, while it is always uncountable in $\on{Mot}_{\on{loc}}$.
\begin{prop}\label{countability}
Let $k$ be a countable field and $e$ be the exponential characteristic of $k$. Then the full subcategory of $\mathcal{MS}(k)[\frac{1}{e}]^\vee$ spanned by compact objects is countable, in the sense that the $\pi_0$ of all the mapping spectra are countable.
\end{prop}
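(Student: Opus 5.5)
The plan is to reduce the statement to a countability statement about the compact part of $\mathcal{MS}(k)[\frac{1}{e}]$ itself, and then propagate countability through each step of the construction of $\mathcal{MS}$.

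\textbf{Reduction to $\mathcal{MS}$.} By the commutative diagram in Section \ref{compactibility}, for a compactly generated category $\mathcal{C}$ we have $\mathcal{C}^\vee\simeq\on{Ind}((\mathcal{C}^\omega)^{op})$. Hence $(\mathcal{C}^\vee)^\omega\simeq(\mathcal{C}^\omega)^{op}$, and mapping spectra are the same up to swapping the two arguments. Since $\mathcal{MS}(k)[\frac{1}{e}]$ is compactly generated, it therefore suffices to show that $\pi_0$ of mapping spectra between compact objects of $\mathcal{MS}(k)[\frac{1}{e}]$ is countable.

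Every compact object is a retract of a finite colimit of shifts of generators $\Sigma^{-n}\Sigma^\infty_+X$ with $X\in Sm^{\on{fp}}(k)$. Applying five-lemma-type arguments to the long exact sequences for cofibers, and noting that retracts of countable groups are countable, it is enough to show the following: all homotopy groups $\pi_i$ of $\on{Map}(\Sigma^{-n}\Sigma^\infty_+X,\Sigma^{-m}\Sigma^\infty_+Y)$ are countable.

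\textbf{Countability of the small site and the presheaf level.} Since $k$ is countable, $Sm^{\on{fp}}(k)$ (classical smooth schemes, as derived smooth schemes over $k$ are classical) is essentially countable with countable Hom sets. It follows that the stable category $\mathcal{P}sh(Sm^{\on{fp}}(k))^\omega$ is essentially countable with countable homotopy groups of mapping spectra: it is generated by $\Sigma^\infty_+ y(X)$, and $\pi_i\on{Map}(\Sigma^\infty_+y(X),\Sigma^\infty_+y(Y))=\pi_i^s(\Sigma^\infty_+\on{Hom}(X,Y))$ is countable.

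The localization $\on{L}_{\on{Nis},\on{ebu}}$ is generated by countably many maps, namely the Nisnevich squares, the elementary blowup squares and $0\to y(\emptyset)$. By Neeman–Thomason, its compact part is the idempotent completion of the Verdier quotient of $\mathcal{P}sh^\omega$ by the thick subcategory $\mathcal{K}$ generated by the cofibers of these maps; this $\mathcal{K}$ is again essentially countable. Mapping groups in a Verdier quotient are filtered colimits, over the countable category of maps $Z\to A$ with cofiber in $\mathcal{K}$, of countable groups $\pi_0\on{Map}(Z,B)$. They are therefore countable. Idempotent completion preserves this property.

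\textbf{Inverting $\mathbb{P}^1$ and $e$.} For the formal inversion I use the c-spectra model and \cite[Lemma 1.5.2]{AI}: the right adjoint of $\on{L}_{\mathbb{P}^1}$ preserves filtered colimits, and $\mathbb{P}^1$ is compact. This expresses $\on{Map}(\Sigma^{-n}\Sigma^\infty_+X,\Sigma^{-m}\Sigma^\infty_+Y)$ as a sequential colimit, over $r$, of mapping spectra in $\mathcal{P}sh_{\on{Nis},\on{ebu}}$ between compact objects of the form $X\otimes(\mathbb{P}^1)^{\otimes(r+n)}$ and $Y\otimes(\mathbb{P}^1)^{\otimes(r+m)}$. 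A sequential colimit of countable groups is countable. Inverting $e$ is again a sequential colimit along $\times e$, so countability is preserved.

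\textbf{Main obstacle.} The delicate step is the formal inversion of $\mathbb{P}^1$, since $\mathbb{P}^1$ is not symmetric and the telescope model is unavailable. Here I must check carefully that the c-spectrum structure still yields a sequential-colimit formula for maps out of $\Sigma^{-n}\Sigma^\infty_+X$ (a countable colimit, possibly involving countably many twists by permutations). If this formula is awkward, my fallback is to argue abstractly: $\mathcal{MS}(k)^\omega$ is the thick closure of the image of a countable category under a compact-preserving functor whose right adjoint commutes with filtered colimits, and $\on{L}_{\mathbb{P}^1}$ can be realized as a countable colimit in $\mathsf{Cat}^{\on{perf}}$ of essentially countable categories, each of which has countable mapping groups.
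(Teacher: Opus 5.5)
The reduction to $\mathcal{MS}(k)$ via $(\mathcal{C}^\vee)^\omega\simeq(\mathcal{C}^\omega)^{op}$ is fine and agrees with the paper. So are the countability of $\mathcal{P}sh(Sm^{\on{fp}}(k))^\omega$, the Neeman--Thomason/Verdier-quotient count for $\on{L}_{\on{Nis},\on{ebu}}$, and the treatment of $[\frac{1}{e}]$.

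\textbf{The gap: the step inverting $\mathbb{P}^1$.} The formula you want is
\[
\on{Map}_{\mathcal{C}[c^{-1}]}(\on{L}A,\on{L}B)\simeq\on{colim}_r\on{Map}_{\mathcal{C}}(A\otimes c^{\otimes r},B\otimes c^{\otimes r}).
\]
This is exactly the telescope model, and it requires $c$ to be symmetric. \cite[Lemma 1.5.2]{AI} only says that the right adjoint of $\on{L}_{\mathbb{P}^1}$ preserves filtered colimits, so $\on{L}$ preserves compact objects. It does not say that $\on{L}$ is computed by a sequential colimit.

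Without symmetry the formula is actually false. Take $\mathcal{C}=\mathcal{P}sh(free(\ast))$ and $c=y(\ast)$. Then $free(\ast)[\ast^{-1}]\simeq\Omega^\infty\mathbb{S}$, the free Picard groupoid, so $\pi_0\on{End}(1)$ in $\mathcal{C}[c^{-1}]$ is $\mathbb{Z}[\pi_1\mathbb{S}]=\mathbb{Z}[\mathbb{Z}/2]$. Your colimit instead gives $\mathbb{Z}[\Sigma_\infty]$. In c-spectrum terms, the naive construction $\on{colim}_n(s_-^nE)^{c^{\otimes n}}$ is a c-spectrum only when the permutation actions on $c^{\otimes n}$ die in the colimit. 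As the paper stresses, the symmetry of $\mathbb{P}^1$ relies on $\mathbb{A}^1$-invariance, so this is not available for $\mathcal{MS}$.

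Your fallback does not close the gap. A ``countable colimit in $\mathsf{Cat}^{\on{perf}}$'' realizing $\on{L}_{\mathbb{P}^1}$ is either the telescope again, or the bar construction for the pushout of Definition \ref{def.formal.inversion}. In the latter case you would still have to control mapping spectra in a geometric realization in $\mathsf{Cat}^{\on{perf}}$, which is not formal.

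\textbf{The missing idea.} No formula for $\on{L}_{\mathbb{P}^1}$ is needed: run your Verdier-quotient argument a second time, on lax spectra. Set $\mathcal{C}=\mathcal{P}sh_{\on{Nis},\on{ebu}}(Sm^{\on{fp}}(k))$ and $c=\mathbb{P}^1$.
\begin{enumerate}
\item The category $\on{Sp}^{lax}_c(\mathcal{C})=\on{Mod}_{S_c}(\mathcal{C}^\Sigma)$ is compactly generated by the objects $S_c\otimes s_+^nF(A)$, with $A$ in a countable set of compact generators.
\item Its compact part has countable mapping groups. Indeed, $(S_c\otimes s_+^mF(B))_n$ is a finite sum of copies of $c^{\otimes(n-m)}\otimes B$. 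Hence the groups $\pi_\ast\on{Map}(S_c\otimes s_+^nF(A),S_c\otimes s_+^mF(B))\simeq\pi_\ast\on{Map}_{\mathcal{C}}(A,(S_c\otimes s_+^mF(B))_n)$ are countable.
\item The c-spectra are exactly the objects local for the countably many maps $S_c\otimes s_+^{n+1}F(c\otimes A)\to S_c\otimes s_+^nF(A)$, which are maps between compact objects. This locality is precisely the condition $E_n\simeq(E_{n+1})^c$.
\item So \cite[Theorem I.3.3]{NS} again presents $\mathcal{MS}(k)^\omega$ as the idempotent completion of a Verdier quotient of a countable category by a countably generated thick subcategory. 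Your calculus-of-fractions count then applies verbatim.
\end{enumerate}
This is the paper's proof.
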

\begin{proof}
It suffices to show for $\mathcal{MS}(k)$. We first show that $\mathcal{P}sh_{\on{Nis},\on{ebu}}(Sm^{\on{fp}}(k))$ has countable subcategory of compact objects. By \cite[Theorem I.3.3]{NS} and \cite[Proposition 5.6]{BGT}, we see that $\mathcal{P}sh_{\on{Nis},\on{ebu}}(Sm^{\on{fp}}(k))$ is the Verdier quotient of\[\mathcal{A}\rightarrow\mathcal{P}sh(Sm^{\on{fp}}(k)),\]
where $\mathcal{A}$ is spanned by, for all Nisnevich squares and elementary blowup squares\[\begin{tikzcd}
	W && U \\
	\\
	V && X
	\arrow[from=1-1, to=1-3]
	\arrow[from=1-1, to=3-1]
	\arrow[from=1-3, to=3-3]
	\arrow[from=3-1, to=3-3],
\end{tikzcd}\]
the cofibers of\[
j(U)\amalg_{j(W)}j(V)\rightarrow j(X).\]
Moreover, the mapping spectra in $\mathcal{P}sh_{\on{Nis},\on{ebu}}(Sm^{\on{fp}}(k))$ between $\on{L}\circ j(X)$ and $\on{L}\circ j(Y)$ is computed by\[
colim_{Z \in \mathcal{A}_{/ Y}} \operatorname{Map}_{\mathcal{P}sh(Sm^{\on{fp}}(k))}(j(X), \operatorname{cofib}(Z \rightarrow j(Y))),\]
hence is countable as a countable colimit of countable sets.

To pass to $\mathcal{MS}(k)$, we consider the c-spectra model. It directly follows Definition \ref{lax.} that $\on{Sp}^{lax}_{\mathbb{P}^1}(\mathcal{P}sh_{\on{Nis},\on{ebu}}(Sm^{\on{fp}}(k)))$ has countable mapping spectra between compacts. Applying \cite[Theorem I.3.3]{NS} again shows that $\mathcal{MS}(k)$ has countable mapping spectra between compacts, as desired.
\end{proof}

Since the mapping spectra between representables in $\on{Mod}_{\Psi_\ast(1)}(\mathcal{MS}(k)[\frac{1}{e}]^\vee)$ is computed in $\mathcal{MS}(k)[\frac{1}{e}]^\vee$ up to tensoring the target with the $\Psi_\ast(1)$, and the latter one is a $\omega_1$-compact object as a dual version of $\on{KGL}$, we conclude that the mapping spectra between representables in $\on{Mod}_{\Psi_\ast(1)}(\mathcal{MS}(k)[\frac{1}{e}]^\vee)$ is countable as a $\omega_1$-small colimit of countable sets.

On the other hand, for a non-zero $k$-algebra $R$, \cite[Theorem 9.1]{Efi25} shows that\[
\mathbb{W}(R)\simeq\pi_0\on{Map}_{\on{Mot}_{\on{loc}}}(\mathcal{U}\on{perf}_{k[t]},\mathcal{U}\on{perf}_R),\]
where $\mathbb{W}(R)$is the ring of big Witt vectors. The underlying set of $\mathbb{W}(R)$ is $(1+tR[[t]])^\times$. In particular, it is uncountable.
\bibliographystyle{alpha}
\bibliography{Biblio.bib}
\end{document}